\newtheorem{thm}{Theorem}[section]
\newtheorem{proposition}[thm]{Proposition}
\newtheorem{corollary}[thm]{Corollary}
\newtheorem{lemma}[thm]{Lemma}
\newtheorem{definition}[thm]{Definition}
\newtheorem{remark}[thm]{Remark}
\newtheorem{notation}{Notation}
\newcommand{\Hom}{\mathrm{Hom}}
\newcommand{\add}{\mathrm{add}}
\newcommand{\cX}{\mathcal{X}}
\newcommand{\cC}{\mathcal{C}}
\newcommand{\cP}{\mathcal{P}}
\newcommand{\cT}{\mathcal{T}}
\newcommand{\cZ}{\mathcal{Z}}
\newcommand{\cY}{\mathcal{Y}}
\newcommand{\bZ}{\mathbb{Z}}
\newcommand{\bK}{\mathbb{K}}
\begin{document}

\title{Lattices of t-structures and thick subcategories for discrete cluster categories}
\author{Sira Gratz and Alexandra Zvonareva}
\thanks{The first author acknowledges support through VILLUM FONDEN project number 42076, and EPSRC grant EP/V038672/1.}
\address{Sira Gratz, Department of Mathematics, Aarhus University, Ny Munkegade 118, 8000 Aarhus C, Denmark}
\email{sira@math.au.dk}
\address{Alexandra Zvonareva, Institut f\"ur Algebra und Zahlentheorie, Universit\"at Stuttgart, Pfaffenwaldring 57, 70569 Stuttgart, Germany}
\email{alexandra.zvonareva@mathematik.uni-stuttgart.de}
\subjclass[2020]{18E40, 18G80, 06A12}
\maketitle
\vspace{-30pt}
\begin{abstract}
\noindent We classify t-structures and thick subcategories in any discrete cluster category $\cC(\cZ)$ of Dynkin type $A$, and show that the set of all t-structures on $\cC(\cZ)$ is a lattice under inclusion of aisles, with meet given by their intersection. We show that both the lattice of t-structures on $\cC(\cZ)$ obtained in this way and the lattice of thick subcategories of $\cC(\cZ)$ are intimately related to the lattice of non-crossing partitions of type $A$. In particular, the lattice of equivalence classes of non-degenerate t-structures on such a category is isomorphic to the lattice of non-crossing partitions of a finite linearly ordered set.
\end{abstract}

\tableofcontents
\vspace{-20pt}
\section{Introduction}
The notion of a t-structure, first introduced by Beilinson, Bernstein, Deligne and Gabber (see \cite{BBD} for the latest edition), lies behind several groundbreaking developments in modern mathematics. A t-structure on a triangulated category  simultaneously provides a decomposition of the category and a generalized notion of homology taking values in an embedded abelian category. This concept is crucial in defining perverse sheaves and stability conditions and has deep relations to tilting theory, silting theory and uniqueness of enhancements.

Given a triangulated category, there is a natural partial order on its collection of t-structures given by inclusion of aisles. This is the order introduced by Aihara and Iyama \cite{AI} for the particular case of t-structures generated by silting objects and studied in silting theory. This poset is poorly understood in general. In particular, one would like to know when meets and joins exist, or, even more ambitiously, when the intersection of two or more aisles, or the extension closure of their union, is again an aisle.
This problem was addressed by Bondal \cite{Bondal} for the intersection of aisles and coaisles and by Broomhead, Pauksztello and Ploog \cite{BBP} for the intersection of coaisles. Both provide some sufficient condition for the intersection of two (co)aisles to be a (co)aisle, the former in terms of {\em consistent pairs}, and the latter in terms of a {\em refined truncation algorithm}. The question of when these conditions hold remains open.
In particular, \cite{Bondal} shows that the lattice structure obtained from consistent pairs of t-structures satisfies strong modularity and distributivity conditions that severely restrict the kind of lattices we can obtain in this way.

In this paper we present a class of categories whose t-structures form a lattice under inclusion of aisles, with meet given by intersection: the discrete cluster categories of Dynkin type $A$, as introduced by Igusa and Todorov in \cite{IT}. The lattices we obtain in this way are intimately related to non-crossing partitions of type $A$. In particular, they are, in general, neither distributive nor modular, and thus do not fall under the framework set up in \cite{Bondal}, providing a fresh perspective on the question of when we can expect t-structures to form a lattice under the expected operations.

A discrete cluster category of Dynkin type $A$ is a triangulated category $\cC(\cZ)$ associated to a discrete subset $\cZ \subseteq S^1$ of the unit circle with $n < \infty$ limit points. It demonstrates cluster combinatorics of Dynkin type $A$. In particular, it has cluster tilting subcategories, classified by Gratz, Holm and J{\o}rgensen in \cite{GHJ}, given by suitable triangulations of the closed disc with marked points $\cZ$. While providing a familiar environment reminiscent of finite type $A$ cluster combinatorics, these categories exhibit phenomena that remain hidden in the finite rank setting. Thus cluster tilting subcategories have infinitely many isoclassses of indecomposable objects, as opposed to the finite rank case, the suspension functor is no longer periodic, and we now find non-trivial t-structures. For the case $n=1$, these were first classified by Ng in \cite{Ng}. We generalise this result for $n \geq 2$, and connect it to non-crossing partitions of $[n] = \{1, \ldots, n\}$. More specifically, t-structures in $\cC(\cZ)$ are classified by {\em $\overline{\cZ}$-decorated non-crossing partitions}, that is, by pairs $(\cP,{\bf x})$, where $\cP$ is a non-crossing partition of $[n]$ and ${\bf x}$ is a compatible point in $\overline{\cZ}^n$, where $\overline{\cZ}$ denotes the topological closure of $\cZ$ (cf.\ Definition \ref{D:Zdecoratedncpartition} for the precise definition).

\begin{thm}[Theorem \ref{T:one-to-one}]\label{thm A}
There is a one-to-one correspondence
	\[
		\{ \overline{\cZ}\text{-decorated non-crossing partitions of $[n]$}\} \to \{\text{t-structures of $\cC(\cZ)$}\}.
	\]
	It sends a $\overline{\cZ}$-decorated non-crossing partition $(\cP, (x_1, \ldots, x_n))$ to the t-structure with aisle 
	\[
	\cX(\cP,{\bf x}) = \add \{\{y_0,y_1\} \; \text{arc of}\; \cZ \mid y_0,y_1 \in \bigcup_{i \in B} (a_i,x_i] \; \text{for some} \; B \in \cP\}.
	\]
\end{thm}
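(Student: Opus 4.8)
The map sends $(\cP,\mathbf x)$ to $\add$ of an explicitly described set of arcs of $\cZ$, so the plan is to argue entirely at the level of arc configurations, using the combinatorial dictionary established in the earlier sections: the action of $[1]$ on arcs, the triangles realising $\Ext^1$ between two arcs as resolutions of their crossings, and a criterion detecting when $\add$ of a set of arcs is an aisle. There are then three things to prove: that $\cX(\cP,\mathbf x)$ is genuinely an aisle, that the assignment is injective, and that it is surjective.

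For well-definedness I would verify the aisle axioms in turn. Closure under $[1]$ holds because each interval $(a_i,x_i]$ is half-open precisely at the accumulation point $a_i$, which is the direction in which $[1]$ moves the endpoints of an arc (this is the source of the non-periodicity of the suspension), so $[1]$ preserves admissibility. Closure under extensions is where the hypotheses on $(\cP,\mathbf x)$ enter: since $\cP$ is non-crossing and $\mathbf x$ is compatible, two arcs whose endpoints lie in the regions $\bigcup_{i\in B}(a_i,x_i]$ and $\bigcup_{i\in B'}(a_i,x_i]$ attached to distinct blocks cannot cross, hence have no non-split extension, while two arcs with endpoints in a single such region resolve to arcs whose endpoints are among the original four, hence stay in that region. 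For existence of a right adjoint to the inclusion I would either invoke the combinatorial aisle criterion directly, or construct the coreflection of an arbitrary arc as its largest admissible truncation towards the accumulation points.

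For injectivity I would reconstruct $(\cP,\mathbf x)$ from the set of arcs belonging to $\cX(\cP,\mathbf x)$: the point $x_i$ is the supremum, in the order on the ray out of $a_i$, of the endpoints on that ray of arcs of the aisle, and $i,j$ lie in a common block exactly when the aisle contains an arc joining the ray out of $a_i$ to the ray out of $a_j$. The compatibility conditions of Definition \ref{D:Zdecoratedncpartition} are precisely what guarantee that these recipes return the original $x_i$ and the original partition, and in particular that the block relation is transitive.

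Surjectivity is the step I expect to be the main obstacle. Given an arbitrary aisle $\cX$, it equals $\add\cA$ for a set $\cA$ of arcs, as aisles are closed under direct summands. Closure of $\cA$ under $[1]$ forces $\cA$, along every ray it meets, to contain arcs with an endpoint arbitrarily close to the corresponding accumulation point, which lets me define $x_i\in\overline\cZ$ as the relevant supremum and $i\sim j$ as above. The crucial structural claim is that $\sim$ is an equivalence relation whose blocks form a non-crossing partition $\cP$: were two blocks to cross, $\cA$ would contain two arcs in crossing position whose resolution, forced into $\cA$ by extension-closure, would join the two blocks, a contradiction; the same resolution argument yields transitivity. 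By construction $\cX\subseteq\cX(\cP,\mathbf x)$, and the reverse inclusion $\cX(\cP,\mathbf x)\subseteq\cX$ reduces to showing that every admissible arc is built from members of $\cA$ by finitely many extensions and coreflections. This reverse inclusion is the delicate part: controlling coreflections is genuinely harder than the rest, above all in the limiting case $x_i\in\overline\cZ\setminus\cZ$, where the relevant coreflection is a new, non-compact object; here I would lean on the explicit truncation triangles of the earlier sections and on the observation that the restriction of $\cX$ to a single block's region is an aisle of a smaller discrete cluster category, localising the hard inclusion to the already-understood rank-one case block by block.
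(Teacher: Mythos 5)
Your overall strategy matches the paper's: use the combinatorial criterion for aisles (PC and PTO from Theorem \ref{T:torsion pairs}, plus closure under clockwise rotation), and recover the decorated partition from an aisle by taking $x_i$ to be the supremum of endpoints in $(a_i,a_{i+1})$ and declaring $i\sim j$ when the aisle contains an arc joining the two intervals. But two of the three verifications hide genuine difficulty that the proposal does not address. For well-definedness you verify $\Sigma$-closure and PTO, but not the precovering condition PC, which you only gesture at with ``invoke the combinatorial aisle criterion'' or ``construct the coreflection''. This is precisely where the compatibility constraints of Definition \ref{D:Zdecoratedncpartition} do their work: if a sequence of endpoints from the proposed aisle converges to $a_j$ from below, that forces $x_{j-1}=a_j$, hence $j-1$ is an adjacency of $\cP$, and one must then analyse the behaviour at the second endpoint case by case to produce the required sequence converging from above. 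Nothing in your argument engages with this, and it is not automatic.

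For surjectivity, the reverse inclusion $\cX(\cP,{\bf x})\subseteq\cX$ is the delicate step, and your proposed reduction ``block by block to the rank-one case'' does not go through. A block $B$ with $|B|>1$ yields a region $\bigcup_{i\in B}(a_i,a_{i+1})$ whose associated cluster category has rank $|B|$, not one; moreover the suspension of $\cC(\cZ)$ does not restrict to the suspension of $\cC(\cZ')$ for $\cZ'=\cZ\cap\bigcup_{i\in B}(a_i,a_{i+1})$, since the immediate predecessor in $\cZ$ of a point just above $a_i$ lies outside the block's region, while its predecessor in $\cZ'$ sits just below $a_{j+1}$ for a different $j\in B$. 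The paper instead runs a direct four-case argument sliding endpoints via PTO and $\Sigma$-closure of $\cX$, and that argument has no shortcut through the $n=1$ case. Finally, the remark that when $x_i\notin\cZ$ ``the relevant coreflection is a new, non-compact object'' is a misconception: every object of $\cC(\cZ)$ is a finite direct sum of arcs, and the approximation triangle (cf.\ Corollary \ref{C:approx}) always lives in $\cC(\cZ)$ itself, so no completion or auxiliary object arises even when the decoration lands on a limit point.
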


A t-structure is uniquely determined by its aisle, and the correspondence from Theorem \ref{thm A} explicitly connects a $\overline{\cZ}$-decorated non-crossing partition with an aisle represented in terms of the combinatorial model of the disc with marked points $\cZ$. The duality of the aisle-coaisle relationship is reflected in the combinatorial model: The coaisle of a t-structure is naturally associated to the Kreweras complement \cite{K} of the non-crossing partition corresponding to its aisle. Using this description, we also describe the approximation triangles for any t-structure on $\cC(\cZ)$ combinatorially. Having classified all t-structures of $\cC(\cZ)$, we can restrict to those with specific properties. While there are no bounded t-structures, we have bounded above t-structures, described by the coarsest non-crossing partition $\{[n]\}$ of $[n]$, and bounded below t-structures, described by the finest non-crossing partition $\{\{1\},\ldots,\{n\}\}$ of $[n]$. Furthermore, the non-degenerate t-structures of $\cC(\cZ)$ correspond precisely to those $\overline{\cZ}$-decorated non-crossing partitions whose decorations lie in $\cZ^n$.
The classification of t-structures on $\cC(\cZ)$ allows us to prove that they form a lattice under inclusion of aisles, as promised above.

\begin{thm}[Theorem \ref{T:lattice}, Proposition \ref{P:meet is intersection}]
	The set of t-structures of $\cC(\cZ)$ forms a lattice under inclusion of aisles. The meet is given by the intersection of aisles.
\end{thm}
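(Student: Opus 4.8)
The strategy is to push everything through the combinatorial classification of Theorem~\ref{thm A} and prove the stronger fact that the intersection of \emph{any} family of aisles of t-structures on $\cC(\cZ)$ is again an aisle; the lattice statements then follow formally. I would first record how the inclusion order on aisles reads off the decorated non-crossing partitions: writing $R(B,\mathbf{x}) = \bigcup_{i \in B}(a_i,x_i]$ for the region of a block $B$ of $\cP$, Theorem~\ref{T:one-to-one} says that an arc $\{y_0,y_1\}$ of $\cZ$ lies in $\cX(\cP,\mathbf{x})$ exactly when $y_0,y_1$ lie in a common region; hence $\cX(\cP,\mathbf{x}) \subseteq \cX(\cP',\mathbf{x}')$ if and only if every region of $(\cP,\mathbf{x})$ is contained in a single region of $(\cP',\mathbf{x}')$ (the ``if'' direction is immediate; for the converse, if some region of $(\cP,\mathbf{x})$ met two regions of $(\cP',\mathbf{x}')$, or left them altogether, one exhibits an arc of $\cZ$ inside it violating the inclusion, checking the boundary behaviour at the limit points $a_1,\dots,a_n$ against Definition~\ref{D:Zdecoratedncpartition}). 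Thus the poset of t-structures is identified with that of $\overline{\cZ}$-decorated non-crossing partitions ordered by refinement of region families, and it is bounded: the greatest element has aisle $\cC(\cZ)$ and the least has aisle $0$.

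The main point is that the intersection of a family $\{\cX(\cP_\alpha,\mathbf{x}_\alpha)\}_\alpha$ is again of the form $\cX(\cP,\mathbf{x})$. As an object is a finite direct sum of arcs, it lies in the intersection precisely when each of its constituent arcs lies in every $\cX(\cP_\alpha,\mathbf{x}_\alpha)$, so the natural candidate for the region family of $(\cP,\mathbf{x})$ is the common refinement of the families $\{R(B,\mathbf{x}_\alpha) : B \in \cP_\alpha\}_\alpha$, off which $\cP$ and the decorations $\mathbf{x}$ are read. Two ingredients make this work. First, $\mathrm{NC}([n])$ is finite and closed under common refinement, so the combinatorial shape of the meet is controlled; showing that the resulting $\cP$ is \emph{itself} non-crossing is the geometric crux — intersecting two non-crossing families of regions on the disc cannot manufacture a crossing. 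Second, $\overline{\cZ}$ is compact, hence closed in $S^1$; because aisles only detect arcs with both endpoints in $\cZ$ and the intervals $(a_i,x_i]$ are half-open, a nested intersection of such arcs is again a half-open arc $(a_i,x_i]$ with $x_i \in \overline{\cZ}$, whether or not the relevant infimum of endpoints is attained. Granting that $(\cP,\mathbf{x})$ is an honest $\overline{\cZ}$-decorated non-crossing partition, Theorem~\ref{thm A} then guarantees that $\cX(\cP,\mathbf{x})$ really is an aisle — which is essential, since an intersection of coreflective subcategories need not be coreflective, so closure under suspension, extensions and summands would not be enough.

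With this in hand the rest is formal. The intersection of any family of aisles is its greatest lower bound among all aisles, so arbitrary meets in the poset of t-structures exist and are computed as intersections of aisles; this is Proposition~\ref{P:meet is intersection}. A poset admitting all meets and having a greatest element is a complete lattice — the join of a family being the meet of its set of upper bounds, which is non-empty as it contains the aisle $\cC(\cZ)$ — so the t-structures of $\cC(\cZ)$ form a (complete) lattice, which is Theorem~\ref{T:lattice}. The only genuinely hard part is the middle step: verifying that the common refinement of the region families satisfies the compatibility condition of Definition~\ref{D:Zdecoratedncpartition}, i.e.\ that the non-crossing condition survives intersection, that the decorations stay inside $\overline{\cZ}$, and that the degenerate configurations (a region wrapping around a limit point, or shrinking to a point or to $\emptyset$) are handled correctly.
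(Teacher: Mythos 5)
Your approach is correct in spirit but genuinely different from the paper's, and it leaves the hard step only flagged rather than carried out. You aim to show that the intersection of an \emph{arbitrary} family of aisles is again an aisle, from which a complete lattice follows formally, whereas the paper writes down the binary meet and join explicitly as $\cX(\cP\wedge\cP',\min\{{\bf x},{\bf x}'\})$ and $\cX(\cP\vee\cP',\max\{{\bf x},{\bf x}'\})$, uses Lemma~\ref{L:order} to check these are the greatest lower and least upper bounds, and then in Proposition~\ref{P:meet is intersection} separately verifies by direct computation that the meet aisle coincides with the set-theoretic intersection. Your route, if completed, yields a strictly stronger conclusion (completeness), but the ``middle step'' you defer is where all the substance lies, and it is more delicate than in the binary case: for an infinite family the infimum $\inf_\alpha x_i^\alpha$ can equal $a_i$ even though no individual $x_i^\alpha$ does (take $x_i^\alpha\in\cZ$ converging to the two-sided limit point $a_i$ from above), so the index $i$ must then be reclassified as a singleton of the resulting decorated partition even though it need not be a singleton of the partition-theoretic meet $\bigwedge_\alpha\cP_\alpha$; the partition your region-refinement procedure actually produces may thus be strictly finer than $\bigwedge_\alpha\cP_\alpha$, and you would need to argue this adjusted pair still satisfies Definition~\ref{D:Zdecoratedncpartition}. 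For binary meets this phenomenon cannot occur (if $\min\{x_i,x'_i\}=a_i$ then already $x_i=a_i$ or $x'_i=a_i$), which is precisely why the paper's verification that $\min\{{\bf x},{\bf x}'\}$ is a valid $\overline{\cZ}$-decoration of $\cP\wedge\cP'$ is short. Finally, what you call the ``geometric crux'' --- that intersecting two non-crossing region families cannot manufacture a crossing --- is in fact immediate (non-crossing partitions are meet-closed inside the full partition lattice); the genuine crux is the decoration compatibility of Definition~\ref{D:Zdecoratedncpartition} together with the re-indexing of singletons and adjacencies, which you identify at the end but do not execute.
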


Another prominent class of subcategories of triangulated categories are thick subcategories. They have been classified for a number of important classes of triangulated categories, e.g., in classical work by Devinatz, Hopkins and Smith \cite{DHS} and Neeman \cite{Neemanthick} for the perfect derived category of a commutative noetherian ring and by Benson, Carlson and Rickard \cite{BCR} for the  stable category of the group algebra of a finite group. 
Recent advances in the classification of thick subcategories have been made in several areas, for instance in tensor triangular geometry starting with Balmer's work \cite{Bal}, algebraic geometry such as in work by Takahashi \cite{Takahashi} and Elagin and Lunts \cite{EL} and representation theory such as in work by Broomhead \cite{Broomhead}. For a list of classifications of thick tensor ideals, see the list in \cite{Balguide}.   

Contrary to aisles of t-structures thick subcategories always form a lattice under inclusion. Specific connections between lattices of thick subcategories and lattices of non-crossing partitions have been established in the examples studied by Ingalls and Thomas \cite{IngTh} and Gratz and Stevenson \cite{GS}, and explored more generally by Krause in \cite{Krause:ncthick}.
Thick subcategories in $\cC(\cZ)$ are given by {\em non-exhaustive non-crossing partitions of $[n]$}, i.e.\ by non-crossing partitions of subsets of $[n]$. We denote the lattice of non-exhaustive non-crossing partitions by $NNC_n$.

\begin{thm}[Theorem \ref{T:thick}]
There is an isomorphism of lattices 
\[
		NNC_n \cong \mathrm{Thick}(\cC(\cZ)).
	\]
\end{thm}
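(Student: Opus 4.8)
The plan is to build the isomorphism in two halves: first identify thick subcategories of $\cC(\cZ)$ combinatorially, then check that the obvious combinatorial map $NNC_n \to \mathrm{Thick}(\cC(\cZ))$ is an order isomorphism. For the first half, I would exploit the classification of t-structures from Theorem \ref{thm A} together with the fact that a thick subcategory is in particular closed under suspension, desuspension, cones and direct summands. A clean route is to observe that every thick subcategory $\cT$ is, by its closure properties, built out of arcs of $\cZ$; the indecomposables of $\cC(\cZ)$ are arcs $\{y_0,y_1\}$, and being closed under the shift and under cones forces $\cT = \add$ of a set of arcs stable under the combinatorial operations that arise from extensions. Concretely, I expect the arcs appearing in $\cT$ to be exactly those with both endpoints in $\overline{\cZ}_S := \bigcup_{i\in S}[a_i, a_i']$ for some sub-union $S$ of the ``intervals between limit points'', refined by a non-crossing partition structure on the relevant index set: two indices $i,j$ lie in the same block precisely when $\cT$ contains an arc connecting the $i$-th interval to the $j$-th interval. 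This is what ``non-crossing partition of a subset of $[n]$'', i.e.\ an element of $NNC_n$, records. The non-crossing condition is forced because a crossing pair of arcs in a thick subcategory generates, via repeated cones, arcs whose endpoints eventually fill out an interval, collapsing two blocks into one; and the ``non-exhaustive'' feature (a genuine subset of $[n]$) is exactly the extra freedom that distinguishes thick subcategories from aisles, since a thick subcategory need not contain arcs near a given limit point at all.

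The main technical work is the following pair of closure statements, which I would isolate as lemmas. \textbf{(a)} Given any set of arcs $\cA$, the smallest thick subcategory containing $\add\cA$ equals $\add$ of the set of arcs $\{y_0,y_1\}$ with $y_0,y_1$ in the union of the ``spans'' of the connected components determined by $\cA$; and \textbf{(b)} every thick subcategory arises this way, i.e.\ is determined by the induced data $(S,\cP)\in NNC_n$ where $S\subseteq[n]$ records which limit-point-intervals are ``reached'' and $\cP$ is the resulting non-crossing partition. Statement (a) is an inductive analysis of extensions between arcs in $\cC(\cZ)$ — the $\Hom$ and $\Ext^1$ spaces between arcs are one-dimensional in the overlapping/linking configurations and zero otherwise, so cones of maps between arcs are again (sums of) arcs, and one checks that the combinatorial ``merge spans, then close up'' operation is idempotent and matches the categorical closure. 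This can likely be run in parallel with, or deduced from, the approximation-triangle description already promised in the paper for t-structures; alternatively one can piggyback on the classification of aisles: a thick subcategory is in particular both an aisle-like and coaisle-like object locally, and intersecting the t-structure data for $\cX(\cP,\mathbf x)$ over varying decorations $\mathbf x$ that ``fill'' the intervals in $S$ produces exactly the candidate thick subcategories. Statement (b) then follows by showing a thick subcategory is generated by its indecomposables and that the generated span cannot be smaller than claimed.

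Once (a) and (b) are in place, the map $\Phi\colon NNC_n \to \mathrm{Thick}(\cC(\cZ))$ sending $(S,\cP)$ to $\add$ of the associated arcs is a well-defined bijection by (b), and it remains to check it is an isomorphism of lattices. Since $\mathrm{Thick}(\cC(\cZ))$ is ordered by inclusion and $\Phi$ is inclusion-preserving essentially by construction (more blocks merged, or more intervals included, gives more arcs), and since a bijective order-preserving map whose inverse is also order-preserving between posets is a lattice isomorphism, it suffices to verify that $\Phi^{-1}$ is order-preserving: if $\cT\subseteq\cT'$ then every arc of $\cT$ is an arc of $\cT'$, so the reached intervals of $\cT$ are contained in those of $\cT'$ and the partition of $\cT$ refines (the restriction of) the partition of $\cT'$, which is exactly the order on $NNC_n$. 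I would then remark that the meet in $NNC_n$ corresponds to intersection of thick subcategories (immediate: an arc lies in $\cT\cap\cT'$ iff it lies in both, and ``common reached intervals, common refinement of blocks'' is the meet in $NNC_n$), while the join corresponds to the thick closure of the union, computed by the ``merge and close up'' operation of statement (a). The step I expect to be the genuine obstacle is (a), specifically proving that the naive combinatorial closure is already thick — i.e.\ that one never produces, as a cone, an indecomposable that is not an arc inside the expected span, and that endpoints genuinely can be pushed to the limit points $a_i'$ (so that the data really is a partition of a subset of $[n]$ and not something finer involving points of $\overline{\cZ}$). Handling the limiting behaviour at the $n$ accumulation points carefully, and ruling out ``partial'' thick subcategories that stop short of a limit point, is where the argument needs the most care.
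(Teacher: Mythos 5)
Your proposal takes essentially the same approach as the paper: you identify the same combinatorial description (arcs with endpoints confined to unions of limit-point intervals, governed by a non-exhaustive non-crossing partition whose non-crossing condition is forced by closure under extensions of crossing arcs), and you isolate the same key technical lemma — that cones of maps between arcs whose endpoints lie in a fixed subset of $\cZ$ decompose as sums of arcs with endpoints in that same subset — which the paper establishes via the inductive octahedral arguments of Lemmas~\ref{ConeInd} and~\ref{ThickLem}. The paper's inverse construction $\cX \mapsto \cP_\cX$ and the verification that the assignments are mutually inverse and order-preserving follow the same outline as your statements (a) and (b).
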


Having made implicit use of the beauty of non-crossing partitions to better our understanding of various structures on $\cC(\cZ)$, it is only fair that they should make an unadulterated appearance. Indeed, they describe the  equivalence classes (in the sense of Neeman \cite{NeEqT}) of non-degenerate t-structures, see Corollary \ref{C:lattice of equivalence classes}.

\section{Set-up}

In this paper we will consider $\bK$-linear triangulated categories over some field $\bK$. The shift functor in a triangulated category will be usually denoted by $\Sigma$, we will also sometimes call it suspension. All subcategories are assumed to be full and closed under direct sums and summands. 

In order to discuss discrete cluster categories of type $A$, we first introduce the appropriate combinatorial model via arcs in a closed disc.
Let us denote by $S^1$ the unit circle, considered with the anti-clockwise order. Namely, 
 for pairwise distinct $x,y,z \in S^1$ we write
\[
	x < y < z 
\]
if and only if when we go in an anti-clockwise direction, we encounter first $x$, then $y$, then $z$.
For $m \geq 3$ and $x_1, \ldots, x_m \in S^1$ we write
\[
	x_1 < x_2 < \ldots < x_m
\] 
if and only if $x_i < x_{i+1} < x_{i+2}$ for all $1 \leq i \leq m$, where for this purpose we consider the indices modulo $m$. In particular, this also implies that $x_{m-1} < x_m < x_1$ and $x_m<x_1<x_2$.

Let us fix a  discrete, infinite set $\cZ \subseteq S^1$. Set $L(\cZ) = \overline{\cZ} \setminus \cZ$ to be the set of limit points, where $\overline{\cZ}$ denotes the topological closure of $\cZ$, and assume that $|L(\cZ)| = n \in \bZ_{>0}$, and that every limit point is a two-sided limit point.  Note that usually such a subset is called {\bf admissible}.
We label the limit points of $\cZ$ by $a_1, \ldots, a_n$, where 
\[
	a_1 < a_2 < \ldots < a_{n-1} < a_n,
\]
see for example Figure \ref{fig:admissible subset}. For ease of notation, throughout we will consider the indices modulo $n$.
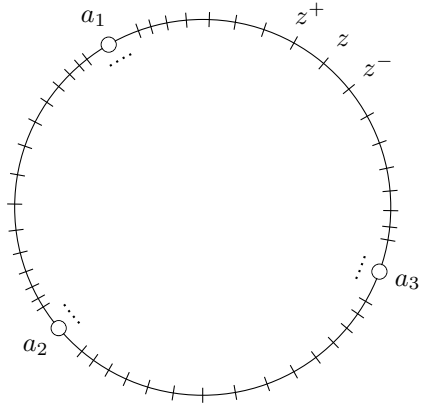
\begin{figure}[h]
\begin{center}
\begin{tikzpicture}[scale=5]
        \draw (0,0) circle(0.5cm);
	
	\node at (50:0.58){$z$};
	\node at (39:0.60){$z^-$};
	\node at (61:0.59){$z^+$};

    \draw  (128:0.48) -- (128:0.52);
    \draw  (131:0.48) -- (131:0.52);
    \draw  (135:0.48) -- (135:0.52);
    \draw  (140:0.48) -- (140:0.52);
    \draw  (146:0.48) -- (146:0.52);
    \draw  (153:0.48) -- (153:0.52);
    \draw  (161:0.48) -- (161:0.52);
	\draw  (170:0.48) -- (170:0.52);
    \draw  (179:0.48) -- (179:0.52);
    \draw  (187:0.48) -- (187:0.52);
    \draw  (194:0.48) -- (194:0.52);
    \draw  (200:0.48) -- (200:0.52);
    \draw  (205:0.48) -- (205:0.52);
    \draw  (209:0.48) -- (209:0.52);
    \draw  (212:0.48) -- (212:0.52);
    
    \draw  (230:0.48) -- (230:0.52);
    \draw  (234.5:0.48) -- (234.5:0.52);
    \draw  (240:0.48) -- (240:0.52);
    \draw  (246:0.48) -- (246:0.52);
    \draw  (253:0.48) -- (253:0.52);
    \draw  (261:0.48) -- (261:0.52);
    \draw  (270:0.48) -- (270:0.52);    
	\draw  (280:0.48) -- (280:0.52);
    \draw  (290:0.48) -- (290:0.52);
    \draw  (299:0.48) -- (299:0.52);
    \draw  (307:0.48) -- (307:0.52);
    \draw  (314:0.48) -- (314:0.52);
    \draw  (320:0.48) -- (320:0.52);
    \draw  (325.5:0.48) -- (325.5:0.52);
    \draw  (330:0.48) -- (330:0.52);

    \draw  (-10:0.48) -- (-10:0.52);
    \draw  (-6:0.48) -- (-6:0.52);
    \draw  (-1:0.48) -- (-1:0.52);
    \draw  (5:0.48) -- (5:0.52);
    \draw  (12:0.48) -- (12:0.52);
    \draw  (20:0.48) -- (20:0.52);
    \draw  (29:0.48) -- (29:0.52);
    \draw  (39:0.48) -- (39:0.52);
	\draw  (50:0.48) -- (50:0.52);
    \draw  (61:0.48) -- (61:0.52);
    \draw  (71:0.48) -- (71:0.52);
    \draw  (80:0.48) -- (80:0.52);
    \draw  (88:0.48) -- (88:0.52);
    \draw  (95:0.48) -- (95:0.52);
    \draw  (101:0.48) -- (101:0.52);
    \draw  (106:0.48) -- (106:0.52);
    \draw  (110:0.48) -- (110:0.52);

	\draw (120:0.5) node[fill=white,circle,inner sep=0.065cm] {} circle (0.02cm);
	\node at (120:0.58){$a_1$};
	\draw (220:0.5) node[fill=white,circle,inner sep=0.065cm] {} circle (0.02cm);
	\node at (220:0.58){$a_2$};
	\draw (340:0.5) node[fill=white,circle,inner sep=0.065cm] {} circle (0.02cm);
	\node at (340:0.58){$a_3$};
	\draw[thick,dotted] (115:0.45) arc (115:125:0.45);
	\draw[thick,dotted] (215:0.45) arc (215:225:0.45);
	\draw[thick,dotted] (335:0.45) arc (335:345:0.45);		
	
  \end{tikzpicture}
  \end{center}
  \caption{Admissible subset $\cZ\subseteq S^1$ with $|L(\cZ)| = 3$. The limit points are marked with small circles.  
\label{fig:admissible subset}}
\end{figure}

For $a,b \in \overline{\cZ}$, we denote the interval between $a$ and $b$ when going in an anti-clockwise direction by
\[
	(a,b) = \{c \in \overline{\cZ} \mid a < c < b \}.
\]
Note that this interval may contain limit points. Analogously we define the closed, and half-open intervals $[a,b], [a,b)$ and $(a,b]$ as subsets of $\overline{\cZ}$.
Each point $z \in \cZ$ has an immediate successor $z^+$ and an immediate predecessor $z^-$, i.e.\ points $z^+,z^- \in \cZ$ such that $z^- < z < z^+$ and $(z^-,z) = (z, z^+) = \varnothing$, see Figure \ref{fig:admissible subset}.

\begin{notation}
	For a point $z \in \cZ$ we iteratively set, for $m \geq 0$:
	\[
		z^{(0)} = z; \; \; z^{(m+1)} = (z^{(m)})^+; \; \; z^{(-m-1)} = (z^{(-m)})^-.
	\]
\end{notation}

\begin{definition}

An {\bf arc} of $\cZ$ is a $2$-element subset $\{z_0,z_1\} \subseteq \cZ$ such that $z_1 \notin \{z_0^-,z_0,z_0^+\}$. We call $z_0$ and $z_1$ the {\bf endpoints} of the arc $\{z_0,z_1\}$.

\end{definition}

Igusa and Todorov \cite[Section~2.4]{IT} define the {\bf discrete cluster category} $\cC(\cZ)$ --  a $\bK$-linear, triangulated, 2-Calabi-Yau, Krull-Schmidt category associated to $\cZ$. Its nontrivial indecomposable objects correspond to arcs of $\cZ$. 
If $\{y_0,y_1\}$ is an arc of $\cZ$, by abuse of notation we also write $\{y_0,y_1\}$ for the corresponding indecomposable object. Suspension $\Sigma$ on $\cC(\cZ)$ acts on indecomposable objects as 
\[
	\Sigma(\{y_0,y_1\}) = \{y_0^-,y_1^-\}.
\]
If $\{y_0,y_1\}$ and $\{y_0',y_1'\}$ are arcs of $\cZ$ with
			\[
				y_0 < y_0' < y_1 < y_1' \hspace{10pt} \text{ or } \hspace{10pt} y_0 < y_1' < y_1 < y_0',
			\]
then we say that $\{y_0,y_1\}$ and $\{y_0',y_1'\}$ {\bf cross}.	
The Hom-spaces in the category $\cC(\cZ)$ can be read off from the respective positioning of the arcs and are described as follows:
$$\Hom_{\cC(\cZ)}(\{y_0,y_1\},\Sigma \{y_0',y_1'\})=\begin{cases} \bK, \text{ if } \{y_0,y_1\} \text{ and } \{y_0',y_1'\} \text{ cross, }\\ 0, \text{ otherwise. } \end{cases}$$
A non-zero morphism from $\{y_0,y_1\}$ to $\{y'_0,y'_1\}$ with $y_0 < y'^+_0 < y_1 < y_1'^+$ factors through an indecomposable object $S$ if and only if $S$ is of the form $\{s_0,s_1\}$ with $y_0 \leq s_0 \leq y'_0$ and $y_1 \leq s_1 \leq y'_1$, cf.\ \cite[Lemma 2.4.2]{IT}.
In the case when the arcs $\{y_0,y_1\}$ and $\{y'_0,y_1'\}$ cross, say with $y_0 < y_0' < y_1 < y_1'$, we have the following triangle for every non-zero map $f\in \Hom_{\cC(\cZ)}(\{y_0,y_1\},\Sigma \{y_0',y_1'\})$:
\begin{equation}\label{ExtOfArcs}
    \{y_0',y_1'\} \rightarrow X\oplus Z \rightarrow \{y_0,y_1\} \xrightarrow{f} \Sigma \{y_0',y_1'\},
\end{equation}  
where $X$ and $Z$ are the objects corresponding to the arcs $\{y_0,y_0'\}$ and $\{y_1,y_1'\}$. Note that the morphisms $\{y_0',y_1'\} \rightarrow X$, $\{y_0',y_1'\} \rightarrow Z$, $X \rightarrow \{y_0,y_1\}$ and $ Z \rightarrow \{y_0,y_1\}$ appearing in the triangle \eqref{ExtOfArcs} are all nonzero, if the corresponding objects are non-zero.

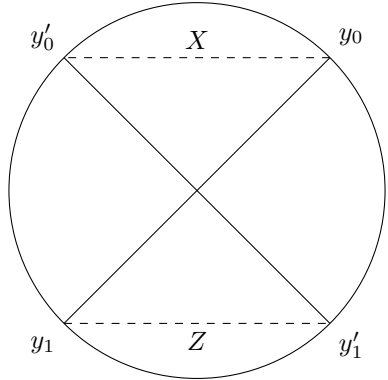
\begin{figure}[ht]
\begin{center}
\begin{tikzpicture}[scale=5]
        \draw (0,0) circle(0.5cm);
        
 	\node at (45:0.58){$y_0$};
 	\node at (135:0.58){$y'_0$};
 	\node at (225:0.58){$y_1$};
 	\node at (315:0.58){$y'_1$};
 	
 		\draw (45:0.5) -- (225:0.5);
 		\draw (135:0.5) -- (315:0.5);
 		
 		\draw[dashed] (45:0.5) -- (135:0.5);
 		\draw[dashed] (225:0.5) -- (315:0.5);
 		
 		\node at (90:0.4){$X$};
 		\node at (270:0.4){$Z$};

  \end{tikzpicture}
  \end{center}
  \caption{An illustration of the cocone $X\oplus Z$ of a map corresponding to the intersection of the arcs $\{y_0,y_1\}$ and $\{y'_0,y'_1\}$.
\label{fig:cocone}}
\end{figure}
\section{Thick subcategories}\label{S:thicksubcats}

In this section we classify thick subcategories of $\cC(\cZ)$ and describe the lattice of thick subcategories using non-exhaustive non-crossing partitions of $[n]$, where $[n]$ is the poset $[n] = \{1, \ldots, n\}$.

\begin{definition}\label{DefThick}
    Let $\cT$ be a triangulated category. A full subcategory $\cX$ of $\cT$ is called {\bf thick} if it is closed under cones,  direct summands and the action of $\Sigma$ and $\Sigma^{-1}$, i.e.\ it is a triangulated subcategory of $\cT$ closed under direct summands. 
\end{definition}

The set of thick subcategories of an essentially small triangulated category $\cT$ forms a complete lattice under inclusion, that is a poset with arbitrary joins and meets. Indeed, since thick subcategories are defined by closure conditions (see Definition \ref{DefThick}), an arbitrary intersection of thick subcategories is still a thick subcategory and so arbitrary meets exist. The existence of arbitrary joins follows automatically. Namely, the join of a set of thick subcategories can be given as the intersection of all thick subcategories containing each subcategory from the set. We denote the lattice of thick subcategories of $\cT$ by $\mathrm{Thick}(\cT)$.
For a set of arcs $\cY$ we will denote by $\add (\cY)$  the smallest subcategory containing the arcs from $\cY$ and closed under direct sums and summands. Mapping a set of arcs $\cY$ to $\add (\cY)$ yields a bijection between sets of arcs of $\cZ$ and subcategories of $\cC(\cZ)$. In particular, the empty set of arcs corresponds to the subcategory containing only $0$. By abuse of notation we will sometimes write $\cY$ instead of $\add (\cY)$.

The following lemma describes a couple of elementary features of triangulated categories, see for example Neeman's exposition in \cite[Chapter~1]{Neemanbook}.

\begin{lemma}\label{ConeOfIso}\label{ConeOfZero}
Let $ X$ and $Y_j$, $j=1,\dots,m$ be objects in $\cC(\cZ)$ and let $\Sigma^{-1}X\xrightarrow{f} \bigoplus\limits_{j=1}^mY_j$ be a morphism with components $f_j: \Sigma^{-1}X\rightarrow Y_j$ for $1\leq j\leq m$. Consider a triangle 
$$\Sigma^{-1}X\xrightarrow{f} \bigoplus_{j=1}^mY_j\rightarrow C \rightarrow X. $$
\begin{enumerate}
    \item If $f_k$ is an isomorphism for some $1\leq k \leq m$, then $C\simeq \bigoplus\limits_{j=1,j\neq k}^mY_j$.
\item If $f_k$ is zero for some $1\leq k \leq m$, then $C\simeq Y_k\oplus C'$, where $C'$ is given by the triangle:
$$\Sigma^{-1}X\xrightarrow{f'} \bigoplus_{j=1, j\neq k}^mY_j\rightarrow C' \rightarrow X, \text{ where } f' \text{ has components } f'_j=f_j \text{ for } j\neq k.$$
\end{enumerate}
\end{lemma}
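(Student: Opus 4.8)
The statement to prove is Lemma~\ref{ConeOfIso}/\ref{ConeOfZero}, the two elementary facts about triangles in a triangulated category. Here is how I would approach it.

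\medskip

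\textbf{Overall strategy.} Both items are purely formal consequences of the triangulated category axioms (TR1--TR4) together with the standard lemma that a map of triangles which is an isomorphism on two of the three vertices is an isomorphism on the third. I would not try to reprove that underlying lemma; it is exactly what the reference to \cite[Chapter~1]{Neemanbook} covers. The plan is to build, in each case, an auxiliary triangle whose third term is the candidate cone (the direct sum with the relevant summand removed), produce an explicit morphism of triangles from it to the given triangle, and invoke the two-out-of-three isomorphism principle. Throughout I will write $Y = \bigoplus_{j=1}^m Y_j$ and $\hat{Y}_k = \bigoplus_{j \neq k} Y_j$, so that $Y \simeq Y_k \oplus \hat{Y}_k$.

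\medskip

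\textbf{Proof of (1).} Assume $f_k \colon \Sigma^{-1}X \to Y_k$ is an isomorphism. Consider the rotated triangle $\Sigma^{-1} X \xrightarrow{f_k} Y_k \to 0 \to X$, which is a valid triangle since $f_k$ is an isomorphism (the cone of an isomorphism is zero, by TR1--TR3). Taking the direct sum of this triangle with the contractible triangle $0 \to \hat{Y}_k \xrightarrow{\;=\;} \hat{Y}_k \to 0$ yields a triangle
\[
\Sigma^{-1}X \xrightarrow{\;\binom{f_k}{0}\;} Y_k \oplus \hat{Y}_k \longrightarrow \hat{Y}_k \longrightarrow X .
\]
Now $\binom{f_k}{0}$ and $f = (f_j)_j$ differ only in the components indexed by $j \neq k$; I would write down the automorphism $\varphi$ of $Y = Y_k \oplus \hat{Y}_k$ given in block form by $\varphi = \bigl(\begin{smallmatrix} 1 & 0 \\ g f_k^{-1} & 1 \end{smallmatrix}\bigr)$, where $g = (f_j)_{j\neq k} \colon \Sigma^{-1}X \to \hat{Y}_k$, so that $\varphi \circ \binom{f_k}{0} = f$. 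The triple $(\mathrm{id}_{\Sigma^{-1}X}, \varphi, \psi)$ is then a morphism from the displayed triangle to the given triangle for a suitable $\psi \colon \hat{Y}_k \to C$ obtained from TR3 (completing the commutative square). Since the first two components are isomorphisms, $\psi$ is an isomorphism, giving $C \simeq \hat{Y}_k = \bigoplus_{j \neq k} Y_j$.

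\medskip

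\textbf{Proof of (2).} Assume $f_k = 0$. Let $C'$ be defined by the triangle $\Sigma^{-1}X \xrightarrow{f'} \hat{Y}_k \to C' \to X$ with $f' = (f_j)_{j\neq k}$ as in the statement. Take the direct sum of this triangle with the contractible triangle $\Sigma^{-1}X \xrightarrow{0} Y_k \xrightarrow{=} Y_k \to X$ — wait, that is not contractible; instead take the split triangle $0 \to Y_k \xrightarrow{=} Y_k \to 0$, rotated appropriately, and add it to the $C'$-triangle. Concretely, the direct sum of $\Sigma^{-1}X \xrightarrow{f'} \hat{Y}_k \to C' \to X$ with $0 \to Y_k \xrightarrow{\;=\;} Y_k \to 0$ is the triangle
\[
\Sigma^{-1}X \xrightarrow{\;\binom{0}{f'}\;} Y_k \oplus \hat{Y}_k \longrightarrow Y_k \oplus C' \longrightarrow X,
\]
and since $f = \binom{f_k}{f'} = \binom{0}{f'}$ by hypothesis, this triangle is isomorphic (via the identity on the outer three terms) to the given one; hence $C \simeq Y_k \oplus C'$ with $C'$ sitting in the stated triangle.

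\medskip

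\textbf{Main obstacle.} The only genuinely non-formal input is the assertion that a morphism of triangles which is an isomorphism on two vertices is an isomorphism on the third, and — closely related — that the direct sum of two triangles is a triangle; both are standard and are exactly what the cited portion of \cite{Neemanbook} provides, so in the write-up I would simply invoke them. The mild bookkeeping nuisance is getting the block-matrix automorphism $\varphi$ in part (1) correct and checking the relevant square commutes; but this is routine linear algebra internal to the additive category and does not require any calculation worth spelling out in detail.
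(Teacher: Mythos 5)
Your proof is correct. The paper does not actually give a proof of this lemma — it simply cites Neeman's book as the source of these ``elementary features of triangulated categories'' — and the argument you spell out (direct sum of triangles to build a model triangle with the candidate cone, elementary automorphism of $Y$ to match the first maps, then the two-out-of-three isomorphism principle via TR3) is exactly the standard argument that Neeman's exposition encapsulates. The only blemish is the mid-paragraph self-correction in part (2), where you first reach for a non-triangle before settling on the direct sum of $\Sigma^{-1}X \xrightarrow{f'} \hat{Y}_k \to C' \to X$ with the split triangle on $Y_k$; the corrected version is right, and the block automorphism $\varphi = \bigl(\begin{smallmatrix} 1 & 0 \\ g f_k^{-1} & 1 \end{smallmatrix}\bigr)$ in part (1) does satisfy $\varphi \circ \binom{f_k}{0} = f$ as you claim.
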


\begin{lemma}\label{ConeInd}
Let $ X$ and $Y_j$, $j=1,\dots,m$ be indecomposable objects in $\cC(\cZ)$, corresponding to arcs with endpoints in some set $\cX\subseteq \cZ$. Then for any triangle of the form
\begin{equation}\label{E:ConeEndpoints}
    \Sigma^{-1}X\xrightarrow{f} \bigoplus_{j=1}^mY_j\rightarrow C \rightarrow X
\end{equation}
the object $C$ decomposes as a direct sum of indecomposable objects represented by arcs with endpoints in $\cX$.

Dually, for indecomposable objects $X_i$, $i=1,\dots,l$ and $Y$ in $\cC(\cZ)$, corresponding to arcs with endpoints in some set $\cX\subseteq \cZ$, any object $C$ arising from a triangle of the form
$$\Sigma^{-1}\bigoplus_{i=1}^lX_i\rightarrow Y\rightarrow C\rightarrow  \bigoplus_{i=1}^lX_i $$
 decomposes as a direct sum of indecomposable objects represented by arcs with endpoints in $\cX$.
\end{lemma}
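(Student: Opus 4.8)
The plan is to prove the first statement by induction on $m$, the number of summands $Y_j$, reducing the general case to the one-step case $m=1$ and then to the triangle \eqref{ExtOfArcs} describing extensions of arcs. First I would set up the base cases: if $m=0$ then $C \simeq X$ is an arc with endpoints in $\cX$, and if the map $f$ is zero we are done immediately since $C \simeq X \oplus \Sigma(\bigoplus Y_j)$ by Lemma \ref{ConeOfZero}(2), and $\Sigma$ preserves the property of having endpoints in $\cX$ (as $z \mapsto z^-$ is a bijection of $\cZ$). Hence I may assume $f \neq 0$, and by decomposing $f$ into its components and using Lemma \ref{ConeOfZero}(2) to split off the summands $Y_j$ on which $f_j = 0$, I may assume every component $f_j \colon \Sigma^{-1}X \to Y_j$ is nonzero.

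Next I would treat the key case $m=1$: a nonzero map $f \colon \Sigma^{-1}X \to Y$ between indecomposables, i.e.\ a nonzero element of $\Hom(\Sigma^{-1}X, Y) = \Hom(X, \Sigma Y)$, meaning the arcs $X = \{x_0,x_1\}$ and $Y = \{y_0,y_1\}$ cross. Then the triangle \eqref{ExtOfArcs} (applied with the roles suitably matched: $Y$ plays the part of $\{y_0',y_1'\}$ and $X$ the part of $\{y_0,y_1\}$) identifies $C$ with $X' \oplus Z'$, where $X'$ and $Z'$ are the arcs obtained by ``resolving the crossing'' — their endpoints are among $\{x_0,x_1,y_0,y_1\}$, hence in $\cX$. (One must also allow the degenerate possibility that one of $X',Z'$ is zero, which only makes the conclusion easier; this is exactly the content of the parenthetical remark after \eqref{ExtOfArcs}.) This settles $m=1$.

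For the inductive step with $m \geq 2$ and all components nonzero, I would peel off one summand: write $\bigoplus_{j=1}^m Y_j = Y_m \oplus \bigoplus_{j=1}^{m-1} Y_j$ and use the octahedral axiom on the composite $\Sigma^{-1}X \xrightarrow{f} \bigoplus_{j=1}^m Y_j \twoheadrightarrow Y_m$ (or, more directly, factor the cone as an iterated cone). Concretely, let $C_m$ be the cone of $f_m \colon \Sigma^{-1}X \to Y_m$; by the $m=1$ case $C_m$ is a direct sum of arcs with endpoints in $\cX$. Then $C$ fits into a triangle with third term $\Sigma^{-1}C_m$ mapping to $\bigoplus_{j=1}^{m-1} Y_j$, so $C$ is an extension of $C_m$-type summands by the cone of a map from a sum of arcs-with-endpoints-in-$\cX$ to $\bigoplus_{j=1}^{m-1} Y_j$. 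Applying the inductive hypothesis to each indecomposable summand of $\Sigma^{-1}C_m$ in turn (again splitting components, using the $m=1$ case, and the octahedral axiom to iterate) finishes the argument. The main obstacle I anticipate is the bookkeeping in this octahedral/iteration step: one has to be careful that at each stage all the relevant intermediate cones remain finite direct sums of indecomposables represented by arcs (which is guaranteed by $\cC(\cZ)$ being Krull--Schmidt together with the $m=1$ case), and that the degenerate cases — zero components, or a resolved crossing producing a zero object, or one of the ``arcs'' $\{z_0,z_1\}$ failing the $z_1 \notin \{z_0^-,z_0,z_0^+\}$ condition and hence being zero in $\cC(\cZ)$ — are all absorbed correctly. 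Finally, the dual statement follows by applying the first part in the opposite category $\cC(\cZ)^{\mathrm{op}}$, which is again a discrete cluster category of the same type (equivalently, by rotating all triangles), so no separate argument is needed.
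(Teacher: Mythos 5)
Your overall strategy—base case via the crossing triangle~\eqref{ExtOfArcs}, reduction to the case where all components $f_j$ are nonzero, then peeling off a summand via the octahedral axiom—has the right shape and matches the paper's outline. But there is a genuine gap in the inductive step, and it is not, as you suspect, ``bookkeeping'': the naive iteration does not terminate. After peeling off $Y_m$, the cone $C_m$ is in general a direct sum of \emph{two} nonzero arcs $X'\oplus Z'$, so the octahedral axiom leaves you with a triangle $\Sigma^{-1}(X'\oplus Z') \to \bigoplus_{j=1}^{m-1}Y_j \to C \to X'\oplus Z'$, which is not of the shape covered by the statement you are inducting on (the left-hand object is no longer indecomposable). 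If you then peel off $X'$, the resulting cone $D$ is, by the induction hypothesis applied to $m-1$, a finite direct sum of arcs—but the number of its summands is \emph{not} bounded by $m-1$. You are then stuck with a triangle $\Sigma^{-1}Z' \to D \to C \to Z'$ that is again of the first form but with possibly \emph{more} middle summands than the $m$ you started with. The induction on $m$ does not close. What you would actually need at this point is the stronger Lemma \ref{ThickLem} (direct sums on both sides), but in the paper that lemma is proved \emph{after}, and \emph{from}, Lemma \ref{ConeInd}; invoking it here would be circular.

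The missing idea is a careful \emph{geometric} choice of which summand to peel off. Writing $X=\{x,x'\}$, the paper chooses $k$ so that $y_k$ is the first endpoint of any $Y_j$ encountered when walking from $x$ towards $x'$. With this choice, one of the two arcs in the cone $C'$ of $f_k$, namely $\{x,y_k\}$, crosses none of the remaining arcs $Y_2,\ldots,Y_m$. Consequently the triangle $\bigoplus_{j\geq 2}Y_j \to C \to C' \to \Sigma\bigoplus_{j\geq 2}Y_j$ splits off $\{x,y_k\}$ as a direct summand, and the remaining piece is a triangle $\Sigma^{-1}C_2 \to \bigoplus_{j\geq 2}Y_j \to C'' \to C_2$ with $C_2$ \emph{indecomposable} and only $m-1$ middle terms, so the inductive hypothesis applies cleanly. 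This decoupling is exactly what makes the induction terminate, and your proposal has no substitute for it. Two minor slips worth noting: when $f=0$ the cone is $C\simeq X\oplus\bigoplus_j Y_j$, not $X\oplus\Sigma\bigl(\bigoplus_j Y_j\bigr)$; and $\Sigma$ does \emph{not} preserve having endpoints in an arbitrary subset $\cX\subseteq\cZ$ (it shifts them). These two errors cancel here, but the second reasoning would fail if it were actually needed.
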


\begin{proof}
We will prove only the first statement, the second statement follows analogously. The statement holds for $m=1$ using triangle \eqref{ExtOfArcs} in case the map $\Sigma^{-1}X\rightarrow Y$ is non-zero and by Lemma \ref{ConeOfZero} (2) otherwise. In case $f$ is an isomorphism we have $C=0$. We prove the statement by induction on $m$. 

Let us consider the triangle \eqref{E:ConeEndpoints}.
By Lemma \ref{ConeOfIso} above we can assume without loss of generality that the components $f_j: \Sigma^{-1}X\rightarrow Y_j$ of $f$ for $1\leq j\leq m$  are neither isomorphisms nor zero. 
Let $X=\{x,x'\}$ and let $Y_j = \{y_j,y'_j\}$. We can pick $k \in \{1, \ldots, m\}$ in such a way that $x< y_k<x'$ and there are no $y=y_j$ or $y=y'_j$ for $j=1, \ldots, m$ with $x<y<y_k$. The arc $Y_k$ is not necessarily unique, we fix one such arc. Renumbering the arcs, we can assume $k=1$.
 Consider the following octahedral diagram, where  $\pi_1$ is the projection on $Y_k=Y_1$:

$$ 
 \xymatrix{
&\bigoplus\limits_{j=2}^mY_j \ar@{=}[r] \ar[d] &\bigoplus\limits_{j=2}^mY_j \ar[d] &\\
\Sigma^{-1}X\ar[r] \ar@{=}[d] &\bigoplus\limits_{j=1}^mY_j \ar[r] \ar[d]^{\pi_1}&C \ar[r] \ar[d]& X  \ar@{=}[d]\\
\Sigma^{-1}X \ar[r]^{f_1}&Y_1 \ar[r] \ar[d]&C' \ar[r] \ar[d]&  X. \\
& \Sigma\bigoplus\limits_{j=2}^m Y_j \ar@{=}[r]&\Sigma\bigoplus\limits_{j=2}^m Y_j&
}
$$
In case $y_k=x^+$ or $y'_k={x'}^+$ the object $C'$ is indecomposable, $C'=\{x',y'_k\}$ or $C'=\{x,y_k\}$, respectively. We obtain the statement applying the induction hypothesis to the triangle 
$$\Sigma^{-1}C'\rightarrow \bigoplus_{j=2}^mY_j \rightarrow C \rightarrow  C'. $$

Otherwise, we get $C'=C_1\oplus C_2$, where $C_1=\{x,y_k\}$ and $C_2=\{x',y'_k\}$. By assumption $C_1$ does not intersect any of the arcs $Y_2,\dots,Y_m$. This implies that the triangle 
$$\bigoplus_{j=2}^mY_j \rightarrow C \rightarrow C_1\oplus C_2 \rightarrow \Sigma \bigoplus_{j=2}^mY_j$$
decomposes as a direct sum of triangles

$$0 \rightarrow C_1 \rightarrow C_1 \rightarrow 0 \hspace{10pt} \text{ and } \hspace{10pt} \bigoplus_{j=2}^mY_j \rightarrow C'' \rightarrow  C_2 \rightarrow \Sigma \bigoplus_{j=2}^mY_j. $$
Applying the induction hypothesis to the last triangle we obtain the statement.
\end{proof}

\begin{lemma}\label{ThickLem}
Let $X_i$, $i=1,\dots,l$ and $Y_j$, $j=1,\dots,m$ be indecomposable objects in  $\cC(\cZ)$, corresponding to arcs with endpoints in some set $\cX \subset \cZ$. Then any object $C$ appearing in a triangle of the form
$$\Sigma^{-1}\bigoplus_{i=1}^lX_i\rightarrow \bigoplus_{j=1}^mY_j\rightarrow C \rightarrow \bigoplus_{i=1}^lX_i $$
decomposes as a direct sum of indecomposable objects, corresponding to arcs with endpoints in $\cX$.
\end{lemma}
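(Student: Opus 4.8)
The plan is to induct on $l$, the number of indecomposable summands on the $X$-side, and to reduce the inductive step to Lemma \ref{ConeInd} by a single application of the octahedral axiom. The base case $l=1$ is nothing but the first statement of Lemma \ref{ConeInd}, in which $X$ is a single indecomposable while $\bigoplus_j Y_j$ is already allowed to be an arbitrary finite direct sum of indecomposables corresponding to arcs with endpoints in $\cX$.

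For the inductive step, fix the morphism $g\colon \Sigma^{-1}\bigoplus_{i=1}^l X_i \to \bigoplus_{j=1}^m Y_j$ with $\mathrm{cone}(g)\simeq C$, and let $\iota\colon \Sigma^{-1}X_l \hookrightarrow \Sigma^{-1}\bigoplus_{i=1}^l X_i$ be the canonical split inclusion, so $\mathrm{cone}(\iota)\simeq \Sigma^{-1}\bigoplus_{i=1}^{l-1}X_i$. Complete the composite $g\iota$ to a triangle
\[
\Sigma^{-1}X_l \xrightarrow{g\iota} \bigoplus_{j=1}^m Y_j \longrightarrow C_1 \longrightarrow X_l.
\]
By the first statement of Lemma \ref{ConeInd}, $C_1\simeq\bigoplus_k Y'_k$ is a finite direct sum of indecomposables whose arcs have endpoints in $\cX$. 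Now apply the octahedral axiom to the composable pair $\Sigma^{-1}X_l\xrightarrow{\iota}\Sigma^{-1}\bigoplus_{i=1}^l X_i\xrightarrow{g}\bigoplus_{j=1}^m Y_j$, whose three cones are $\Sigma^{-1}\bigoplus_{i=1}^{l-1}X_i$, $C$ and $C_1$; it produces a triangle
\[
\Sigma^{-1}\bigoplus_{i=1}^{l-1}X_i \longrightarrow C_1 \longrightarrow C \longrightarrow \bigoplus_{i=1}^{l-1}X_i.
\]
Rewriting $C_1\simeq\bigoplus_k Y'_k$, this is a triangle of exactly the shape in the statement, but now with only $l-1$ objects on the $X$-side, so the induction hypothesis applies and shows that $C$ decomposes as a direct sum of indecomposables corresponding to arcs with endpoints in $\cX$, as required. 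Symmetrically, one could induct on $m$, using the second, dual statement of Lemma \ref{ConeInd} for the base case and to complete the step.

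The argument is purely formal given Lemma \ref{ConeInd}; the only place that calls for care is the octahedral bookkeeping---identifying $\mathrm{cone}(\iota)$ with $\Sigma^{-1}\bigoplus_{i=1}^{l-1}X_i$ (immediate, since $\iota$ is a split monomorphism) and reading off the connecting triangle in the rotation that puts $C_1$ in the role of the middle term and $C$ in the role of the cone. No further information about $\cC(\cZ)$ beyond Lemma \ref{ConeInd} enters.
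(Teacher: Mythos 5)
Your proof is correct. It is the mirror image of the paper's argument: the paper fixes $l$ and inducts on $m$, peeling off a $Y$-summand using the octahedral axiom applied to the split projection $\bigoplus_{j=1}^m Y_j \to \bigoplus_{j=2}^m Y_j$, then invokes the induction hypothesis for the resulting cone $C'$ and finishes with the second (dual) part of Lemma \ref{ConeInd} applied to the triangle $Y_1 \to C \to C' \to \Sigma Y_1$. You instead induct on $l$, peel off an $X$-summand by applying the octahedral to the split inclusion $\Sigma^{-1}X_l \hookrightarrow \Sigma^{-1}\bigoplus X_i$ composed with $g$, invoke the first part of Lemma \ref{ConeInd} for the intermediate cone $C_1$, and then apply the induction hypothesis to the resulting triangle $\Sigma^{-1}\bigoplus_{i<l}X_i \to C_1 \to C \to \bigoplus_{i<l}X_i$. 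Both arguments use exactly the same ingredients (Lemma \ref{ConeInd}, the octahedral axiom, and a one-summand-at-a-time induction) and are of the same length; neither is more elementary or more general than the other. One small point worth making explicit in your write-up: when you invoke the induction hypothesis for the triangle with $C_1$ in the middle, you are using that the hypothesis quantifies over \emph{all} values of $m$, since $C_1$ may have a different number of indecomposable summands than the original $\bigoplus_j Y_j$. This is fine because you induct only on $l$, but it deserves a word.
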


\begin{proof}
We will prove the statement by induction again. Let $l$ be fixed, we will do the induction on $m$. Lemma \ref{ConeInd} is the base of induction. For the general case consider the following octahedral diagram
\[
\xymatrix{
&Y_1 \ar@{=}[r] \ar[d]^{\iota_1} &Y_1 \ar[d] &\\
\Sigma^{-1}\bigoplus\limits_{i=1}^lX_i\ar[r] \ar@{=}[d] &\bigoplus\limits_{j=1}^mY_j \ar[r] \ar[d]&C \ar[r] \ar[d]& \bigoplus\limits_{i=1}^lX_i  \ar@{=}[d]\\
\Sigma^{-1}\bigoplus\limits_{i=1}^lX_i \ar[r]&\bigoplus\limits_{j=2}^mY_j \ar[r] \ar[d]&C' \ar[r] \ar[d]&  \bigoplus\limits_{i=1}^lX_i, \\
& \Sigma Y_1 \ar@{=}[r]&\Sigma Y_1&
}
\]
where $\iota_1$ is the inclusion of $Y_1$ into $\bigoplus\limits_{j=1}^mY_j$ as a direct summand. By induction hypothesis $C'$ is a sum of indecomposable objects with endpoints in $\cX$. By Lemma \ref{ConeInd} so is $C$.
\end{proof}

\begin{definition}\label{DefNonEx}
	A {\bf non-exhaustive non-crossing partition of $\mathbf{[n]}$} is a collection $\cP = \{B_m \subseteq [n] \mid m \in I\}$ of non-empty subsets of $[n]$,
	for some indexing set $I$, such that the following conditions hold. 
	Blocks $B_{m}$ do not intersect, i.e.\ $B_{m_1}\cap B_{m_2}=\varnothing$ for $m_1\neq m_2\in I$. Furthermore whenever $i,j,k,l \in [n]$ with 
	\[
		1 \leq i < k < j < l \leq n
	\] 
	and $i,j \in B_{m_1}$, $k,l \in B_{m_2}$ for some $m_1, m_2 \in I$, then we must have $m_1 = m_2$. 
\end{definition}	
	
	Note that a collection $\cP$ in Definition \ref{DefNonEx} may be empty. The notion of a non-exhaustive non-crossing partition is a generalization of the well-known notion of a non-crossing partition. Namely, a {\bf non-crossing partition of $\mathbf{[n]}$} can be defined as a non-exhaustive non-crossing partition $\cP = \{B_m \mid m \in I\}$ covering the whole set $[n]$, that is  $\cup_{m\in I}B_m=[n]$. A non-exhaustive non-crossing partition of $[n]$ can then be viewed as a non-crossing partition of a subposet of $[n]$.

Consider a regular $n$-gon with vertices labelled consecutively by $1, \ldots, n$. A non-exhaustive non-crossing partition of $[n]$ gives a partition of a subset of  the  vertices of this $n$-gon into blocks  with the property that the convex hulls of the blocks are pairwise disjoint. We will denote the set of all non-exhaustive non-crossing partition of $[n]$ by $NNC_n$. 

\begin{remark}
The number of non-exhaustive non-crossing partition of $[n]$ can be computed by the formula 
\[
    |NNC_n|=\sum_{k=0}^n \genfrac(){0pt}{0}{n}{k} \cdot C_k, 
\]
where $C_k$ denotes the Catalan number.
\end{remark}

Let us fix two non-exhaustive non-crossing partitions $\cP = \{B_m \mid m \in I\}$ and $\cP' = \{B'_{m'} \mid m' \in I'\}$ in $NNC_n$. The set $NNC_n$ forms a lattice under the following relation: We set $\cP\leq \cP'$ if for any block $B_m\in \cP$ there exists a block $B'_{m'} \in \cP'$ such that $B_m\subseteq B'_{m'}$. The fact that this relation gives a partial order on $NNC_n$ is straightforward. The meet operation for $\cP$ and $\cP'$ is given as follows: 
\[
    \cP \wedge \cP' = \{  B_m \cap B'_{m'} \mid m \in I, m' \in I' \text{ and } B_m \cap B'_{m'}\neq \varnothing\}.
\]
The poset $NNC_n$ has a
least element - the empty collection of blocks - and a greatest element - the collection consisting of one block $[n]$, which implies by definition that $NNC_n$ is bounded. Since $NNC_n$ is finite it automatically has a join operation, as any finite, bounded meet semi-lattice (i.e.\ a poset where all pairs of elements have a meet). The join can be described as follows: Given $\cP \in NNC_n$ we can uniquely extend it to the non-crossing partition
\[
    \overline{\cP} = \cP \cup \{\{k\} \mid \text{ there exists no } B \in \cP \text{ such that } k \in B\}
\]
by adding singletons. It is straightforward to check that the join of $\cP_1$ and $\cP_2$ in $NNC_n$  is given by
\[
    \cP_1 \vee \cP_2 = (\overline{\cP}_1 \vee \overline{\cP}_2) \setminus \{\{k\} \mid \text{ there exists no } B \in \cP_1 \cup \cP_2 \text{ such that }k \in B\},
\]  
where $\overline{\cP}_1 \vee \overline{\cP}_2$ is the join of non-crossing partitions as described by Kreweras \cite{K}.

\begin{notation}
Let $\cP = \{B_m \mid m \in I\}$ be a non-exhaustive non-crossing partition of $[n]$. We consider the following full subcategory of $\cC(\cZ)$ closed under direct sums and summands and containing the $0$-object:
\[
	\langle \cP\rangle = \add \{\{y_0,y_1\} \; \text{arc of}\; \cZ \mid y_0,y_1 \in \bigcup_{i \in B_m} (a_i,a_{i+1}) \; \text{for some} \; m \in I\}.
\]    
Recall that $a_1, \ldots, a_n$ denote the limit points of $\cZ$ in an anti-clockwise order.
\end{notation}

\begin{thm}\label{T:thick}
Let $\cC(\cZ)$ be the discrete cluster category associated to $\cZ$. There is an isomorphism of lattices 
\[
		NNC_n \cong \mathrm{Thick}(\cC(\cZ)).
	\]
Under this isomorphism a non-exhaustive non-crossing partition $\cP$ corresponds to the subcategory $\langle \cP\rangle$.
\end{thm}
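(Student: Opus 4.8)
The strategy is to establish a bijection between non-exhaustive non-crossing partitions of $[n]$ and thick subcategories of $\cC(\cZ)$, then verify it is an isomorphism of lattices. First I would show that $\langle \cP \rangle$ is always a thick subcategory: closure under $\Sigma^{\pm 1}$ is immediate since the shift $\Sigma(\{y_0,y_1\}) = \{y_0^-,y_1^-\}$ permutes the arcs with endpoints in each open interval $(a_i,a_{i+1})$ (these intervals are the connected components once the limit points are removed, and $\Sigma$ preserves them since it shifts within $\cZ$). Closure under cones follows from Lemma \ref{ThickLem}: given two arcs in $\langle \cP\rangle$, either they have endpoints in the union of intervals attached to a single block $B_m$, or they live in disjoint unions of intervals coming from different blocks, in which case $\Hom$ between them and their shifts vanishes by the crossing criterion (arcs in disjoint unions of intervals $\bigcup_{i\in B_{m_1}}(a_i,a_{i+1})$ and $\bigcup_{i\in B_{m_2}}(a_i,a_{i+1})$ cannot cross, precisely because the non-crossing condition on $\cP$ forbids the configuration $i<k<j<l$), so a cone of a morphism between them just splits as a direct sum. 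Closure under summands is built into the definition of $\add$.

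\textbf{Surjectivity.} Next I would show every thick subcategory $\cX$ is of the form $\langle \cP\rangle$. Given $\cX$, consider the set $S$ of all endpoints of arcs appearing in $\cX$. Using triangle \eqref{ExtOfArcs} and the factorization property from \cite[Lemma 2.4.2]{IT}: if $\{y_0,y_1\} \in \cX$ with $y_0 \neq y_1^{\pm}$ well-separated, then taking a suitable crossing arc one can extract the ``sub-arcs'' $\{y_0, y_0'\}$ and $\{y_1, y_1'\}$ as summands of cones, and conversely assemble longer arcs. The key point is to show that $\cX$ is determined by which pairs of limit points $a_i, a_{i+1}$ are ``connected'' by $\cX$ --- declare $i \sim i+1$ if some arc of $\cX$ has both endpoints in $(a_i, a_{i+2})$ crossing the ``cut'' at $a_{i+1}$, then take the transitive closure to get blocks; alternatively, it is cleaner to define $i$ and $j$ to be in the same block if there is an arc in $\cX$ with one endpoint in $(a_{i-1},a_i)\cup\ldots$ spanning from the $i$-th gap to the $j$-th gap. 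One then checks that the resulting partition $\cP$ is non-crossing (if two arcs of $\cX$ crossed across incompatible blocks, the cone would produce an arc violating the structure, or more directly: a crossing pair $i<k<j<l$ with arcs connecting $i$-to-$j$ and $k$-to-$l$ in $\cX$ forces, via cones, arcs connecting all four, collapsing the blocks), that $\cX \subseteq \langle \cP\rangle$ by construction, and that $\langle \cP \rangle \subseteq \cX$ because every arc with endpoints in $\bigcup_{i\in B_m}(a_i,a_{i+1})$ can be obtained from the ``generating'' arcs of that block by iterated cones (again using \eqref{ExtOfArcs} to build up and break down arcs within a connected region, as in the proof of Lemma \ref{ConeInd}).

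\textbf{Injectivity and the lattice structure.} Injectivity of $\cP \mapsto \langle\cP\rangle$ follows once surjectivity is set up, since $\cP$ can be recovered from $\langle\cP\rangle$ by the connectivity recipe above. It remains to check that the bijection is an isomorphism of lattices. It suffices to show it is order-preserving and order-reflecting, i.e.\ $\cP \leq \cP'$ iff $\langle \cP\rangle \subseteq \langle\cP'\rangle$. The forward direction is clear: if each block of $\cP$ sits inside a block of $\cP'$, then each generating region for $\cP$ sits inside one for $\cP'$. For the converse, if $\langle\cP\rangle \subseteq \langle\cP'\rangle$ then any arc with both endpoints in $\bigcup_{i\in B_m}(a_i,a_{i+1})$ lies in $\langle\cP'\rangle$; picking an arc that ``witnesses'' the connectivity of $B_m$ forces $B_m$ to be contained in a single block of $\cP'$. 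Since both $NNC_n$ and $\mathrm{Thick}(\cC(\cZ))$ are complete lattices and an order-isomorphism automatically preserves all meets and joins, this finishes the proof; one may also explicitly verify that $\langle \cP\wedge\cP'\rangle = \langle\cP\rangle \cap \langle\cP'\rangle$, matching the formula for meets in $NNC_n$.

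\textbf{Main obstacle.} The technical heart is the surjectivity step --- specifically, showing that an arbitrary thick subcategory is generated (under cones, shifts, summands) by arcs confined to unions of gaps between consecutive limit points, with the non-crossing condition forced automatically. This requires a careful analysis of how cones of morphisms between arcs behave (the content of Lemmas \ref{ConeInd} and \ref{ThickLem}), together with an argument that the ``connectivity pattern'' of a thick subcategory across limit points is stable and cannot be finer or coarser than dictated by $\cP$. Care is also needed with limit points: since arcs only have endpoints in $\cZ$ (never at the $a_i$ themselves), one must ensure the combinatorics of ``crossing a limit point'' is handled correctly, and that no thick subcategory can contain arcs that ``straddle'' a limit point in a way not captured by a block containing the two adjacent indices.
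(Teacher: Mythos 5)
Your proposal follows essentially the same route as the paper: show $\langle\cP\rangle$ is thick via Lemma \ref{ThickLem}, build the inverse map by reading a partition off a thick subcategory via which ``gaps'' $(a_i,a_{i+1})$ it connects, and then check order-isomorphism (and you correctly observe that an order-isomorphism of complete lattices is automatically a lattice isomorphism, so the explicit meet computation is a bonus rather than a necessity). But there is a genuine misstep in your surjectivity sketch: your \emph{first} proposed definition of $\cP_{\cX}$ --- declare $i\sim i+1$ when some arc of $\cX$ lies in $(a_i,a_{i+2})$ and crosses $a_{i+1}$, then take the transitive closure --- cannot work, because it can only ever produce partitions whose blocks are cyclic intervals of $[n]$. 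For example, with $\cP=\{\{1,3\},\{2\}\}$ on $[4]$, the thick subcategory $\langle\cP\rangle$ contains no arc confined to $(a_1,a_3)$ or $(a_2,a_4)$, so under this recipe every $i$ would be a singleton, and you would fail to recover $\cP$. Your ``cleaner alternative'' definition ($i\sim j$ iff some arc of $\cX$ has one endpoint in the $i$-th gap and one in the $j$-th gap) is the correct one and is exactly the paper's.

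Even granting the correct definition, the two steps you flag as the ``main obstacle'' are precisely where the content lives and are not routine: (i) one must prove this relation is actually an equivalence relation --- reflexivity (i.e.\ that membership of $i$ in some block forces an arc with both endpoints inside the $i$-th gap to lie in $\cX$) already requires a chain of extension/crossing arguments, and transitivity requires shifting one arc so it crosses the other and taking cones; and (ii) one must establish $\langle\cP_\cX\rangle\subseteq\cX$, which needs the observation that thick subcategories are stable under rotation $\Sigma$ and that all arcs spanning the same pair of gaps are interchangeable via shifts once one of them is present. The paper carries these out concretely using the triangle \eqref{ExtOfArcs}; your sketch names the right tools but leaves those verifications entirely to the reader. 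Finally, a small point on non-crossing of $\cP_\cX$: your argument that a crossing configuration $i<k<j<l$ forces the blocks to merge is correct in outline, but you should note (as the paper does implicitly) that one may need to apply $\Sigma$ to one of the two witnessing arcs before they actually cross, since arcs spanning the relevant gaps need not cross as drawn.
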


\begin{proof}

We will label the interval $(a_i,a_{i+1})$ of $\cZ$  by $i \in [n]$. First, let us check that the assignment in the statement above is well-defined. For a non-exhaustive non-crossing partition $\cP=\{B_m\mid m\in I\} \in NNC_n$ the corresponding  full subcategory $\langle \cP\rangle$ of $\cZ$ is closed under direct sums, direct summands, shifts and extensions by Lemma \ref{ThickLem}, and hence is a thick subcategory of $\cC(\cZ)$.

Let us now take a thick subcategory $\cX$ of $\cC(\cZ)$ and let $\cP_\cX=\{B_m\mid m\in I\} \in NNC_n$ be the non-exhaustive non-crossing partition of $[n]$ constructed from an equivalence relation on a subset $S$ of $[n]$ as follows. An element  $i$ is in $S$ if and only if there exists an arc $\{x,y\}\in \cX$ with $x\in (a_i,a_{i+1})$. Two elements $i$ and $j$ are equivalent if and only if there exists an arc $\{x,y\}\in \cX$ with $x\in (a_i,a_{i+1})$ and $y\in (a_j,a_{j+1})$. Note that we allow for $i = j$. 

We need to check that this assignment gives an equivalence relation. Symmetry is clear. Let's check reflexively. Let $j \in S$.
Then $\cX$ contains an arc $\{x,y\}$ with $x\in (a_i,a_{i+1})$, $y\in (a_j,a_{j+1})$, for some $i \in S$. Considering extensions between $\{x,y\}$ and $\Sigma\{x,y\}=\{x^-,y^-\}$, we get that $\{x^-,y\}$, $\{x,y^-\}$ and $\Sigma^{-}\{x^-,y\}=\{x,y^+\}$ belong to $\cX$, thus any arc of the form $\{x,y'\}$ with $y'\in (a_j,a_{j+1})$ belongs to $\cX$. Considering extensions between two crossing arcs $\{x,y'\}$ and $\{x^-,y''\}$, we get that $\{y',y''\}$ is in $\cX$ for any $y',y''\in (a_j,a_{j+1})$, and hence $j \sim j$. 
Next we check transitivity. If we have arcs $\{x,y\}\in \cX$ with $x\in (a_i,a_{i+1})$, $y\in (a_j,a_{j+1})$ and $\{y',z'\}\in \cX$ with $y'\in (a_j,a_{j+1})$ and $z\in (a_k,a_{k+1})$ with $i,j,k$ distinct, then shifting one of the arcs we can assume that $\{x,y\}$ and $\{y',z'\}$ cross. Thus one of the extensions between the two arcs contains the arc $\{x,z'\}$  and $i\sim j$, $j\sim k$ implies $i \sim k$.

The assignment gives a non-exhaustive non-crossing partition of $[n]$. Indeed, if we can find $i,j,k,l \in [n]$ with 
	$
		1 \leq i < k < j < l \leq n
	$ 
	and $i,j \in B_{m_1}$ and $k,l \in B_{m_2}$ for some $m_1, m_2 \in I$, then there are arcs $\{x,y\}\in \cX$ with $x\in (a_i,a_{i+1})$, $y\in (a_j,a_{j+1})$ and $\{z,w\}\in \cX$ with $z\in (a_k,a_{k+1})$, $w\in (a_l,a_{l+1})$. The arcs necessarily cross, so $\cX$ contains the arcs $\{x,z\}$, $\{z,y\}$, $\{y,w\}$ and $\{w,x\}$, which implies that $m_1 = m_2$.
	
Let us check that the two assignments are mutually inverse. The composition $\cP \mapsto \langle \cP \rangle \mapsto \cP_{\langle \cP \rangle} $	clearly gives the identity.
Starting from a thick subcategory $\cX$, the inclusion $\cX\subseteq \langle \cP_\cX \rangle$ is also clear. To get the other inclusion, we need to check that if $\cX$ contains an arc $\{x,y\}$ with $x\in (a_i,a_{i+1})$, $y\in (a_j,a_{j+1})$, then it contains all the arcs of the form $\{x',y'\}$ with $x'\in (a_i,a_{i+1})$, $y'\in (a_j,a_{j+1})$. We have already seen that $\cX$ contains any arc of the form $\{x,y''\}$ with $y''\in (a_j,a_{j+1})$. We can shift $\{x',y'\}$ in such a way that $\Sigma^n\{x',y'\}=\{x,y''\}\in\cX$, which implies that $\{x',y'\}\in \cX$, as $\cX$ is closed under shift. 

Both assignments are order preserving, and hence we have constructed an isomorphism of lattices.
\end{proof}

\begin{figure}[ht]
\begin{center}
\begin{tikzpicture}[scale=5]
	
	
	
	
	
	
	\fill[fill=gray!30]    (1:0.5) arc (1:59:0.5) -- (121:0.5) arc(120:179:0.5) -- (1:0.5);
	
	\fill[fill=gray!30]    (181:0.5) arc (181:359:0.5) -- (181:0.5);

	
	
        \draw (0,0) circle(0.5cm);
        
	\draw (0:0.5) node[fill=white,circle,inner sep=0.065cm] {} circle (0.02cm);
	\node at (0:0.58){$a_1$};
	\draw (60:0.5) node[fill=white,circle,inner sep=0.065cm] {} circle (0.02cm);
	\node at (60:0.58){$a_2$};
	\draw (120:0.5) node[fill=white,circle,inner sep=0.065cm] {} circle (0.02cm);
	\node at (120:0.6){$a_3$};
	\draw (180:0.5) node[fill=white,circle,inner sep=0.065cm] {} circle (0.02cm);
	\node[] (a_4) at (180:0.58){$a_4$};	
	\draw (240:0.5) node[fill=white,circle,inner sep=0.065cm] {} circle (0.02cm);
	\node[] (a_5) at (240:0.58){$a_5$};	
	\draw (300:0.5) node[fill=white,circle,inner sep=0.065cm] {} circle (0.02cm);
	\node[] (a_6) at (300:0.58){$a_6$};	
	
	
	\node at (30:0.58){$1$};
	\node at (90:0.58){$2$};
	\node at (150:0.58){$3$};
	\node at (210:0.58){$4$};
	\node at (270:0.58){$5$};
	\node at (330:0.58){$6$};

	\draw[dotted] (30:0.5) -- (150:0.5);
	
	\draw[dotted] (210:0.5) -- (270:0.5);
	\draw[dotted] (270:0.5) -- (330:0.5);
	\draw[dotted] (210:0.5) -- (330:0.5);

  \end{tikzpicture}
  \end{center}
  \caption{An illustration of the thick subcategory associated to the non-exhaustive non-crossing partition $\{\{1,3\},\{4,5,6\}\}$ of $[6]$. Its indecomposable objects correspond to arcs which, when drawn as straight lines between their endpoints, lie completely in one of the two grey shaded regions.}
\label{fig:thick}
\end{figure}
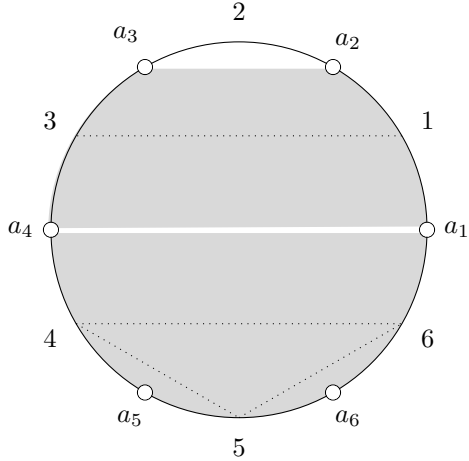

\begin{remark}
    Note that we could have avoided Lemma \ref{ThickLem} in the proof of the fact that given a non-exhaustive non-crossing partition $\cP$ the subcategory $\langle \cP \rangle$ is thick. Indeed, if $\cP$ consists of a single block $B$, then $\langle \cP \rangle$ is thick as it is a right or left orthogonal of a collection of arcs (one can take all arcs, which do not intersect arcs from $\langle \cP \rangle$). For a general $\cP = \{B_m \mid m \in I\}$ we get that $\langle \cP \rangle$ is a union of mutually orthogonal thick subcategories of the form $\langle \{B_m\} \rangle$ and hence is a thick subcategory as well. 
\end{remark}

\section{T-structures and non-crossing partitions}

In this section, we classify t-structures in $\cC(\cZ)$ via decorated non-crossing partitions. Let us start with some definitions.

\begin{definition}
	Let $\cT$ be a triangulated category. A {\bf torsion pair} in $\cT$ is a pair of subcategories $(\cX,\cY)$ of $\cT$ closed under direct summands and such that the following two conditions hold.
	For any $X\in \cX$ and $Y\in\cY$ we have $\Hom_\cT(X,Y) = 0$.
		For every object $T \in \cT$ there exists a distinguished triangle
		\begin{equation}\label{ApproxTri}
		    X \to T \to Y \to \Sigma X,
		\end{equation}
		with $X \in \cX$ and $Y \in \cY$.
		If $(\cX,\cY)$ is a torsion pair, we call $\cX$ its {\bf torsion class} and $\cY$ its {\bf torsion free class}.
	
	A {\bf t-structure} is a torsion pair $(\cX,\cY)$ such that $\cX$ is closed under suspension. If $(\cX,\cY)$ is a t-structure, we call its torsion class $\cX$ its {\bf aisle} and its torsion free class $\cY$ its {\bf coaisle}. In case of t-structures, we will call the triangle \eqref{ApproxTri} the {\bf approximation triangle} of $T$. Note that in that case any two triangles of the form $X \to T \to Y \to \Sigma X,$
with $X \in \cX$ and $Y \in \cY$ are isomorphic.
\end{definition}

\begin{remark}
	A torsion pair $(\cX,\cY)$ in a triangulated category $\cT$ is uniquely defined by its torsion class (or, equivalently, its torsion free class). In fact, if $(\cX,\cY)$ is a torsion pair, then 
	\[
		\cY = \cX^{\perp} = \{T \in \cT \mid \Hom_\cT(X,T) = 0 \text{ for all objects $X\in \cX$}\}
	\]
	and
	\[
		\cX = \;^{\perp}\cY = \{T \in \cT \mid \Hom_\cT(T,Y) = 0 \text{ for all objects $Y \in \cY$}\}.
	\]
	In particular, in order to classify t-structures, it is sufficient to classify aisles of t-structures (or, equivalently, coaisles of t-structures).
\end{remark}
Torsion classes in the category $\cC(\cZ)$ were described in \cite{GHJ} using combinatorial conditions PC and PTO. The abbreviation PC stands for ``precovering'', and PTO stands for "Ptolemy". We will use the following notation to describe these conditions.

\begin{notation}
	If $(x_i)_{i \geq 0}$ is a sequence from $\cZ$ converging to $a \in \cZ$, we write $x_i \to a$.
\end{notation}

If $(x_i)_{i \geq 0}$ is a sequence from $\cZ$ with $x_i \to a$, and there exists $z \in \cZ$ such that $x_i \in [z,a]$ for large enough $i$, we say that $x_i$ is a sequence converging to $a$ from below and write $x_i \to a$ from below. Symmetrically, if there exists $z \in \cZ$ such that $x_i \in [a,z]$ for large enough $i$, we say that $x_i$ is a sequence converging to $a$ from above and write $x_i \to a$ from above. 

\begin{itemize}
	\item[PC] Whenever $(\{y_0^i,y_1^i\})_{i \geq 0}$ is a sequence of arcs from $\cY$ such that $y_0^i \to a$ from below, and $y_1^i \to b$ with $a \neq b$, then there exists a sequence $(\{z_0^i,z_1^i\})_{i \geq 0}$ from $\cY$ with $z_0^i \to a$ from above and $z_1^i \to b$ from above.
	\item[PTO] Whenever $\{y_0,y_1\} \in \cY$ and $\{y_0',y_1'\} \in \cY$ cross, then all arcs of $\cZ$ with both endpoints in $\{y_0,y_0',y_1,y_1'\}$ also lie in $\cY$. 
\end{itemize}

Figure \ref{fig:b_1 in Z} provides an illustration for PC when $a \in L(\cZ)$ and $b \in \cZ$.

		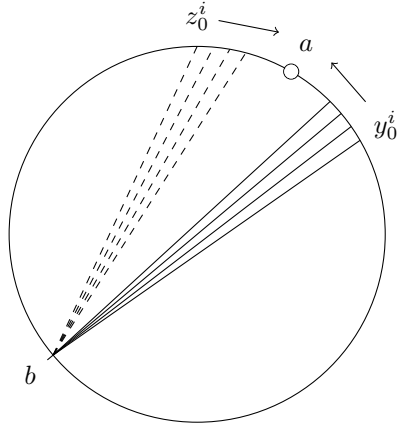
\begin{figure}[ht]
\begin{center}
\begin{tikzpicture}[scale=5]
        \draw (0,0) circle(0.5cm);
        
	\draw (60:0.5) node[fill=white,circle,inner sep=0.065cm] {} circle (0.02cm);
	\node[] (a_k) at (60:0.58){$a$};

	\node at (220:0.58){$b$};
	\draw  (220:0.48) -- (220:0.52);
	
	

	
	\node (z_0) at (90:0.58){$z_0^i$};
	\draw[->] (z_0) -- (68:0.58);
	
	\draw[dashed] (220:0.5) -- (90:0.5);
	\draw[dashed] (220:0.5) -- (85:0.5);
	\draw[dashed] (220:0.5) -- (80:0.5);
	\draw[dashed] (220:0.5) -- (75:0.5);
	
	\node (y_0) at (30:0.58){$y_0^i$};
	\draw[->] (y_0) -- (52:0.58);
	
	\draw (220:0.5) -- (30:0.5);
	\draw (220:0.5) -- (35:0.5);
	\draw (220:0.5) -- (40:0.5);
	\draw (220:0.5) -- (45:0.5);
	
  \end{tikzpicture}
  \end{center}
  \caption{An illustration of PC when $a \in L(\cZ)$ and $b \in \cZ$.
\label{fig:b_1 in Z}}
\end{figure}

\begin{thm}[\cite{GHJ}]\label{T:torsion pairs}
 There is a one-to-one correspondence between torsion classes in $\cC(\cZ)$ and collections of arcs of $\cZ$ satisfying PC and PTO. It sends a torsion class $\cX$ to the collection of arcs corresponding to its indecomposable objects.
\end{thm}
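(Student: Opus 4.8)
The plan is to realise the claimed correspondence as a bijection between summand-closed subcategories of $\cC(\cZ)$ and sets of arcs, and then to cut down to torsion classes. Since $\cC(\cZ)$ is Krull--Schmidt, a summand-closed subcategory $\cX$ is recovered from the set $\cS_\cX$ of arcs of its indecomposable objects, distinct subcategories give distinct sets, and conversely the indecomposables of $\add\cS$ are precisely the arcs in $\cS$; so $\cX\mapsto \cS_\cX$ and $\cS\mapsto\add\cS$ are mutually inverse bijections at the level of summand-closed subcategories. It therefore remains to prove the two implications: (a) if $\cX$ is a torsion class then $\cS_\cX$ satisfies PC and PTO, and (b) if a set of arcs $\cS$ satisfies PC and PTO then $\add\cS$ is a torsion class.

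For (a): a torsion class $\cX = {}^{\perp}\cY$ is closed under extensions, since applying $\Hom_{\cC(\cZ)}(-,Y)$ with $Y\in\cY$ to a triangle $X'\to E\to X''\to\Sigma X'$ with $X',X''\in\cX$ gives an exact sequence with vanishing outer terms, whence $E\in{}^{\perp}\cY=\cX$; it is of course also closed under summands. Now if two arcs of $\cX$ cross, say $\{y_0,y_1\},\{y_0',y_1'\}$ with $y_0<y_0'<y_1<y_1'$, the triangle \eqref{ExtOfArcs} exhibits $\{y_0,y_0'\}\oplus\{y_1,y_1'\}$ as an extension of $\{y_0,y_1\}$ by $\{y_0',y_1'\}$, so it and its summands lie in $\cX$; feeding \eqref{ExtOfArcs} the nonzero map in the (also one-dimensional) space $\Hom_{\cC(\cZ)}(\{y_0',y_1'\},\Sigma\{y_0,y_1\})$ produces in the same way the remaining arcs $\{y_0',y_1\},\{y_0,y_1'\}$, so PTO holds. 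For PC one uses that a torsion class is always precovering: the map $X\to T$ in the approximation triangle of any $T$ is a right $\cX$-approximation, because any $X'\to T$ with $X'\in\cX$ composes to zero into $Y\in\cY$ and hence factors through $X\to T$. As PC is exactly the combinatorial translation of the statement ``$\cX$ is precovering'' inside $\cC(\cZ)$, it follows that $\cS_\cX$ satisfies PC.

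For (b): put $\cX=\add\cS$ and $\cY=\cX^{\perp}$; both are closed under summands and $\Hom_{\cC(\cZ)}(\cX,\cY)=0$ by definition, so the task is to produce an approximation triangle $X\to T\to Y\to\Sigma X$ for every object $T$. Since every object is a finite direct sum of arc-objects and approximation triangles of direct sums are direct sums of approximation triangles, it suffices to treat an arc $T=\{t_0,t_1\}$. I would take $X$ to be the ``largest approximant of $T$ built from $\cS$'': assemble the arcs of $\cS$ admitting a nonzero map to $T$ into a single object, then prune it — using Lemmas \ref{ConeInd} and \ref{ThickLem}, which guarantee that cones of maps between arc-objects decompose again into arc-objects with endpoints among the given ones — so that the resulting map $X\to T$ is a genuine right $\cX$-approximation; one then completes to a triangle $X\to T\to Y\to\Sigma X$ and checks $Y\in\cX^{\perp}$. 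Here PTO is what guarantees that this maximal approximant $X$ is still an object of $\cX$ (overlapping or crossing arcs from $\cS$ must, by PTO, be accompanied by all arcs needed to build their common extensions within $\cX$), and PC is exactly what guarantees that $X$ is realised by an honest, finite object of $\cC(\cZ)$ rather than an infinite limit: when the approximating sequences fail to stabilise, PC supplies the witnessing ``from above'' sequences that let the construction terminate. Unwinding the two constructions then shows the assignments in (a) and (b) are mutually inverse.

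The main obstacle is direction (b), and within it the explicit construction of the approximation triangle for an arbitrary arc $T$ together with the verification that its cocone lies in $\cX^{\perp}$. This forces a case analysis of how $T$ sits relative to the ``regions'' cut out by $\cS$ near the limit points, and a careful handling of the ``from above''/``from below'' asymmetry built into PC --- an asymmetry reflecting that one endpoint of an arc can be slid towards a limit point (or towards the other endpoint) ``for free'' on one side, via shifts and the triangles \eqref{ExtOfArcs}, but must be witnessed by an explicit sequence on the other side. Matching the combinatorial maximality of $X$ with the bookkeeping of these witnessing sequences is where essentially all the work of the theorem lies.
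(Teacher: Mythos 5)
This theorem is imported from \cite{GHJ} and is not proved in the present paper, so there is no in-paper proof to compare your attempt against; I will assess it on its own terms. Your outline identifies the right shape of the argument (Krull--Schmidtness, extension-closure of torsion classes, precovering), and the PTO half of direction (a) is correct and complete. But two genuine gaps remain. In (a), the step ``PC is exactly the combinatorial translation of precovering'' is asserted where it should be argued: PC is a statement about one-sided convergence of sequences of endpoints near limit points, and turning precovering into that statement requires choosing, for a given convergent sequence of arcs $\{y_0^i,y_1^i\}$ without a witnessing ``from above'' sequence, an explicit test object whose $\cX$-precover must supply such a sequence --- that is a nontrivial argument about morphism spaces and factorisation in $\cC(\cZ)$, not a tautology. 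Moreover, even granting that $\cX$ precovering follows, you still need a Wakamatsu-type step to see that an extension-closed, summand-closed, precovering subcategory of a Krull--Schmidt triangulated category is actually a torsion class (i.e.\ that the cone of a minimal right $\cX$-approximation lands in $\cX^\perp$); this is standard but unstated.

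In direction (b) you correctly locate all the substance in building an approximation triangle for a single arc $T$ and checking $Y\in\cX^\perp$, and then explicitly decline to do it. Lemmas \ref{ConeInd} and \ref{ThickLem} do control the shape of cones of maps between arc-objects, but they only say the cone decomposes into arcs with endpoints among the given ones; they do not by themselves produce a finite maximal approximant $X$, show it lies in $\add\cS$ (rather than merely in the span of the endpoints), or show the cocone is orthogonal to $\cS$. The role of PC in forcing finiteness of $X$ and of PTO in forcing $X\in\add\cS$ is gestured at but not carried out, and the ``from above''/``from below'' asymmetry you flag is precisely the part that must be handled case by case. As it stands this is a plan, not a proof.
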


We say that a set of arcs $\cX$ of $\cZ$ is {\bf closed under clockwise rotation}, if, whenever $\{x,y\} \in \cX$ then $\{x^-,y^-\} \in \cX$. It is immediate that the one-to-one correspondence from Theorem \ref{T:torsion pairs} restricts to the following one-to-one correspondence:

\[
  {\left\{ \text{$\cX$ set of arc of $\cZ$}\ \middle\vert \begin{array}{l}
    \cX \; \text{is closed under clockwise rotation and} \\
    \text{satisfies PC and PTO}
  \end{array}\right\} \overset{1:1}{\longleftrightarrow} {\begin{Bmatrix}\text{aisles of t-structures of $\cC(\cZ)$} \end{Bmatrix}} }
\]

We will show that each t-structure on $\cC(\cZ)$ is associated to a unique non-crossing partition of $[n]$.

\begin{definition}
   Let $\cP$ be a (non-crossing) partition of $[n]$. 
    An element $i \in [n]$ is called a {\bf singleton of $\cP$} if $\{i\}$ is a block of $\cP$.
    An element $i \in [n]$ is called an {\bf adjacency of $\cP$} if $i$ and $i+1$ are in the same block of $\cP$ (where we think cyclically modulo $n$ and set $n+1 = 1$).
\end{definition}

To completely classify t-structures in terms of non-crossing partitions, we need the notion of a $\overline{\cZ}$-decoration of $\cP$. Recall that $\overline{\cZ}$ is the topological closure of $\cZ$.

\begin{definition}\label{D:Zdecoratedncpartition}
Let $\cP$ be a non-crossing partition of $[n]$. A {\bf $\overline{\cZ}$-decoration of $\cP$} is an $n$-tuple $(x_1, \ldots, x_n)$ such that
    \[
		x_i \in \begin{cases}
					[a_i,a_{i+1}) & \text{if $i$ is a singleton of $\cP$} \\
					(a_i,a_{i+1}] & \text{if $i$ is an adjacency of $\cP$} \\
					(a_i,a_{i+1}) & \text{else.}
				\end{cases}
	\]
	 Recall that we consider the indices modulo $n$, so $a_{n+1} = a_1$. 
	 
	 A pair $(\cP,\bf{x})$ where $\cP$ is a non-crossing partition of $[n]$ and ${\bf x}$ is a $\overline{\cZ}$-decoration of $\cP$ is called a {\bf $\overline{\cZ}$-decorated non-crossing partition of $[n]$}.
\end{definition}

Note that depending on the partition $\cP$ some of the decorations $x_1, \ldots, x_n$ can be limit points. 

\subsection{Classification of t-structures}

Let $\cP = \{B_m \mid m \in I\}$ be a non-crossing partition of $[n]$, and let ${\bf x} = (x_1, \ldots, x_n)$ be a $\overline{\cZ}$-decoration of $\cP$. Similarly to the case of thick subcategories in the proof of Theorem \ref{T:thick} we set
\[
	\cX(\cP,{\bf x}) = \add \{\{y_0,y_1\} \; \text{arc of}\; \cZ \mid y_0,y_1 \in \bigcup_{i \in B_m} (a_i,x_i] \; \text{for some} \; m \in I\}.
\]

\begin{thm}\label{T:one-to-one}
	There is a one-to-one correspondence
	\[
		\{ \overline{\cZ}\text{-decorated non-crossing partitions of $[n]$}\} \to \{\text{t-structures of $\cC(\cZ)$}\}
	\]
	It sends a pair $(\cP, {\bf x})$ to the t-structure with the aisle $\cX(\cP,{\bf x})$.
\end{thm}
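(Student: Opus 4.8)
The plan is to leverage the already-established classification of torsion classes (Theorem \ref{T:torsion pairs}) together with the clockwise-rotation refinement that characterizes aisles. So the real task is to show that the map $(\cP,{\bf x}) \mapsto \cX(\cP,{\bf x})$ is a bijection onto the set of arc-collections that are closed under clockwise rotation and satisfy PC and PTO. I would split this into three parts: (1) well-definedness, i.e. that $\cX(\cP,{\bf x})$ always lies in the target set; (2) injectivity; (3) surjectivity.

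For well-definedness, fix $(\cP,{\bf x})$ and write $\cX = \cX(\cP,{\bf x})$. Closure under clockwise rotation is the main point: if $\{y_0,y_1\}$ is an arc with $y_0,y_1 \in \bigcup_{i\in B_m}(a_i,x_i]$, I must check $\{y_0^-,y_1^-\}$ still has both endpoints in the same union. The potential failure is when some $y_j = x_i$ sits at the right endpoint of its interval and $y_j^-$ falls out; but this can only happen when $x_i = a_{i+1}$, i.e.\ when $i$ is an adjacency, in which case $i+1 \in B_m$ as well and $(a_{i+1},x_{i+1}]$ picks up $y_j^-$ again (note $y_j^- \in (a_{i+1}, a_{i+1}^{++}) \subseteq (a_{i+1}, x_{i+1}]$ since $x_{i+1} > a_{i+1}$ by the decoration rule for a non-singleton). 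So closure under clockwise rotation forces precisely the half-open conventions in Definition \ref{D:Zdecoratedncpartition}; this is the place where the decoration conditions "pay for themselves." PTO is straightforward from the block description: two crossing arcs of $\cX$ must have all four endpoints in a single $\bigcup_{i\in B_m}(a_i,x_i]$ (crossing arcs cannot spread across two different blocks because the blocks are non-crossing and the intervals $(a_i,x_i]$ for $i$ in distinct blocks are cyclically separated), hence every sub-arc on those four points lies in $\cX$. For PC one argues that the sequences of arcs in the coaisle $\cY = \cX^\perp$ that converge "from below" can always be pushed to converge "from above"; I would describe $\cY$ explicitly first (see below) and then read PC off the combinatorics — here the fact that $x_i$ may be a limit point is exactly what makes PC hold or the appropriate coaisle-version hold at the boundary.

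For injectivity: given $\cX = \cX(\cP,{\bf x})$, I recover $\cP$ and ${\bf x}$ combinatorially, just as in the proof of Theorem \ref{T:thick}. Declare $i \sim j$ iff $\cX$ contains an arc with one endpoint in $(a_i,a_{i+1})$ and the other in $(a_j,a_{j+1})$ (allowing $i=j$), and let $S$ be the set of $i$ for which $\cX$ contains an arc with an endpoint in $(a_i,x_i)$ reaching "past" into a neighbouring interval, or more simply the $i$ with $x_i \neq a_i$; the blocks of $\cP$ are the $\sim$-classes on the relevant subset together with singletons, and then $x_i$ is recovered as the supremum of the set of points $y \in (a_i,a_{i+1}]$ that occur as an endpoint of some arc of $\cX$ with other endpoint appropriately placed — with the half-open/closed distinction at the top determined by whether that supremum is attained. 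One checks that $\cX(\cP,{\bf x})$ actually has $x_i$ attained as a right endpoint iff $i$ is a singleton or adjacency, matching Definition \ref{D:Zdecoratedncpartition}; this makes the recovery unambiguous.

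For surjectivity — which I expect to be the main obstacle — I take an arbitrary arc-collection $\cX$ closed under clockwise rotation and satisfying PC and PTO, and produce $(\cP,{\bf x})$ with $\cX = \cX(\cP,{\bf x})$. Using the recovery recipe above, define $\cP$ via $\sim$ and ${\bf x}$ via the suprema $x_i := \sup\{\,y \in \overline{(a_i,a_{i+1})} : y \text{ is an endpoint of an arc of }\cX \text{ on the } a_i\text{-side}\,\}$ (with $x_i = a_i$ if no such arc exists, i.e.\ $i$ a singleton with empty interval). The work is then: (a) $\cP$ is a non-crossing partition of $[n]$ — this is exactly the PTO-driven argument from Theorem \ref{T:thick}; (b) the decoration conditions hold — clockwise-rotation closure forces $x_i = a_{i+1}$ to happen only at adjacencies and $x_i = a_i$ only at singletons, and forces $x_i$ to be attained as a right endpoint exactly in the singleton/adjacency cases, which is where PC is used to rule out a "half-open with $x_i \in \cZ$ not attained" anomaly at non-boundary positions; and (c) $\cX = \cX(\cP,{\bf x})$ — the inclusion $\cX \subseteq \cX(\cP,{\bf x})$ is by construction, and the reverse inclusion follows exactly as in the last paragraph of the proof of Theorem \ref{T:thick}: from one arc with endpoints in $(a_i,x_i]$ and $(a_j,x_j]$, using clockwise rotation together with PTO applied to suitable crossing pairs (as in the reflexivity/transitivity arguments there), one generates every arc with endpoints in $\bigcup_{k\in B_m}(a_k,x_k]$. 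The delicate cases are the boundary ones, where an endpoint equals some $x_i$ or where $x_i$ is a limit point: here one needs PC to supply, for each arc "just below the boundary", an arc "just above", so that the full half-open interval $(a_i,x_i]$ is covered rather than merely $(a_i,x_i)$. I would organize the surjectivity argument around these boundary cases and treat the interior generation exactly by the Theorem \ref{T:thick} method; the genuinely new content over Theorem \ref{T:thick} is entirely the analysis of the decoration points $x_i$ and the role of PC there.

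Finally, having a bijection of sets, I note it need not yet be declared order-preserving — Theorem \ref{T:one-to-one} as stated is only a bijection — so no further compatibility check is required here; the lattice statements (Theorem \ref{T:lattice}, Proposition \ref{P:meet is intersection}) are handled separately afterwards.
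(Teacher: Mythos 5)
Your overall strategy matches the paper's: reduce to the classification of torsion classes (Theorem \ref{T:torsion pairs}) by checking closure under clockwise rotation, PTO and PC, then invert by means of an equivalence relation on $[n]$ and suprema of endpoint sets, which is exactly how the paper organizes Propositions \ref{P:injective} and \ref{P:surjective}. However, two of your verification steps, as written, are incorrect, and both stem from the same aisle/coaisle confusion.

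Your PC verification targets the wrong object. PC must hold for the arc-set underlying the \emph{torsion class}, i.e.\ for $\cX$ itself; the letter $\cY$ in the statement of PC preceding Theorem \ref{T:torsion pairs} is generic notation for the set being tested, not the coaisle $\cX^\perp$. Checking that ``sequences of arcs in the coaisle $\cY=\cX^\perp$ that converge from below can be pushed to converge from above'' does not establish PC for $\cX(\cP,{\bf x})$. The actual verification (in the paper's Proposition \ref{P:injective}) is a direct case analysis on where the limits $b_0,b_1$ fall; the decisive mechanism is that $b_0 = a_j$ forces $x_{j-1} = a_j$, hence $j-1$ is an adjacency of $\cP$, and then the decoration rule supplies the required approaching sequence inside $\cX$. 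Your closure-under-clockwise-rotation argument is similarly inverted: $\Sigma$ \emph{decreases} endpoints, so if $y_j = x_i$ at the right end of $(a_i,x_i]$ then $y_j^- < x_i$ stays inside, and the only thing to check is $y_j^- > a_i$, which is automatic because $a_i$ is a two-sided limit point and $(a_i, y_j)$ is nonempty --- no decoration or adjacency condition is needed here, and indeed if $x_i = a_{i+1}$ then $x_i \notin \cZ$, so $y_j = x_i$ cannot occur. The scenario you worry about (a right endpoint $y_j^+$ escaping past $x_i$) is what can fail for $\Sigma^{-1}$, i.e.\ for \emph{coaisle} closure, and is irrelevant for the aisle. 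Finally, your surjectivity sketch points in the right direction but underestimates the work: the inclusion $\cX(\cP,{\bf x}) \subseteq \cX$ requires the paper's four-case analysis in Proposition \ref{P:surjective}, which genuinely goes beyond the argument in Theorem \ref{T:thick} because one must now locate endpoints relative to the decoration $x_i$ rather than freely inside $(a_i,a_{i+1})$, and PTO together with shifts must be combined more delicately.
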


Figure \ref{fig:aisle} provides an illustration of an aisle for the case $n = 6$.

\begin{figure}[ht]
\begin{center}
\begin{tikzpicture}[scale=5]
	
	
	\fill[fill=gray!30]    (0:0.5) arc (0:35:0.5) -- (120:0.5) arc(120:155:0.5) -- (0:0.5);
	
	
	\fill[fill=gray!30]    (180:0.5) arc (180:200:0.5) -- (240:0.5) arc(240:335:0.5) -- (180:0.5);
	
        \draw (0,0) circle(0.5cm);
        
	\draw (0:0.5) node[fill=white,circle,inner sep=0.065cm] {} circle (0.02cm);
	\node at (0:0.58){$a_1$};
	\draw (60:0.5) node[fill=white,circle,inner sep=0.065cm] {} circle (0.02cm);
	\node at (60:0.58){$a_2 = x_2$};
	\draw (120:0.5) node[fill=white,circle,inner sep=0.065cm] {} circle (0.02cm);
	\node at (120:0.6){$a_3$};
	\draw (180:0.5) node[fill=white,circle,inner sep=0.065cm] {} circle (0.02cm);
	\node[] (a_4) at (180:0.58){$a_4$};	
	\draw (240:0.5) node[fill=white,circle,inner sep=0.065cm] {} circle (0.02cm);
	\node[] (a_5) at (240:0.58){$a_5$};	
	\draw (300:0.5) node[fill=white,circle,inner sep=0.065cm] {} circle (0.02cm);
	\node[] (a_6) at (300:0.58){$a_6 = x_5$};	
	
	\draw[dotted] (0:0.5) -- (120:0.5);
	
	\draw[dotted] (180:0.5) -- (240:0.5);
	\draw[dotted] (240:0.5) -- (300:0.5);
	\draw[dotted] (180:0.5) -- (300:0.5);

	
	\draw (35:0.52) -- (35:0.48);
	\node (x_1) at (35:0.58){$x_{1}$};
	
	\draw (200:0.52)  -- (200:0.48);
	\node (x_4) at (200:0.58){$x_{4}$};
	
	\draw (155:0.52)  -- (155:0.48);
	\node (x_{3}) at (155:0.58){$x_3$};
	
	\draw (335:0.52)  -- (335:0.48);
	\node (x_{6}) at (335:0.58){$x_6$};

  \end{tikzpicture}
  \end{center}
  \caption{
  An illustration of an aisle in the case $n = 6$. Its associated $\overline{\cZ}$-decorated non-crossing partition is $(\{\{1,3\},\{2\},\{4,5,6\}\}, (x_1,a_2,x_3,x_4,a_6,x_6))$, where for $i \in \{1,3, 4,6\}$ we have $x_i \in (a_i,a_{i+1})$. The indecomposable objects of the aisle correspond precisely to those arcs which, when drawn as straight lines, lie in the grey shaded regions 
}
\label{fig:aisle}
\end{figure}
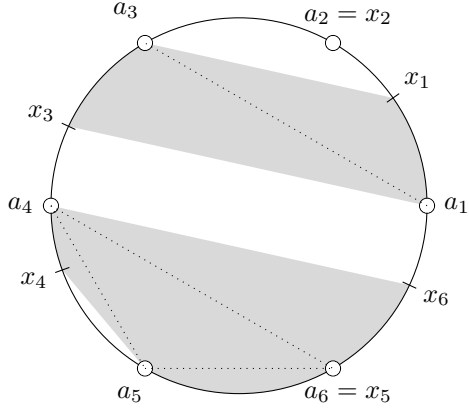

\begin{remark}\label{RemNg}
	For the special case $n=1$, t-structures have been classified by Ng in \cite{Ng}. In this case, the element $1$ should be interpreted as both a singleton and an adjacency in the unique partition of $[1]$. Ng's classification agrees with our classification from Theorem \ref{T:one-to-one}, and can be derived as a special case of our proof of Theorem \ref{T:one-to-one}. For ease of notation, since the result is already known for $n=1$, in what follows we will assume $n \geq 2$. This will allow us, in the very few cases where it is relevant, to avoid a cumbersome case distinction for the degenerate case $a_{i+1} = a_i$ for some $1 \leq i \leq n$, which occurs if and only if $n=1$.
\end{remark}

We use the following two propositions to show Theorem \ref{T:one-to-one}.

\begin{proposition}\label{P:injective}
	Let $(\cP, {\bf x})$ be a $\overline{\cZ}$-decorated non-crossing partition of $[n]$. Then $\cX(\cP,{\bf x})$ is the aisle of a t-structure.
\end{proposition}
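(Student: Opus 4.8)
The plan is to verify that $\cX(\cP,{\bf x})$ satisfies the three conditions characterizing aisles of t-structures established in the one-to-one correspondence right after Theorem \ref{T:torsion pairs}: closure under clockwise rotation, condition PC, and condition PTO. Closure under clockwise rotation is essentially immediate from the definition: if $\{y_0,y_1\}$ is an arc with $y_0,y_1 \in \bigcup_{i\in B_m}(a_i,x_i]$, then since each interval $(a_i,x_i]$ is closed under taking immediate predecessors (the left endpoint $a_i$ is a limit point, hence not in $\cZ$, so $y^-$ never escapes the interval on the left; and it certainly does not escape on the right), we get $y_0^-,y_1^- \in \bigcup_{i\in B_m}(a_i,x_i]$ as well. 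The case $x_i=a_{i+1}$ (an adjacency) needs a brief remark: then $(a_i,x_i]=(a_i,a_{i+1}]$, and for the adjacent block element $i+1$ the interval $(a_{i+1},x_{i+1}]$ starts strictly above $a_{i+1}$, so predecessors still stay inside the union. I would spell this out carefully since the half-open/closed distinctions in Definition \ref{D:Zdecoratedncpartition} are exactly engineered to make this work.

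For PTO, suppose $\{y_0,y_1\}$ and $\{y_0',y_1'\}$ lie in $\cX(\cP,{\bf x})$ and cross. Each arc has both endpoints in a single block-union $U_m := \bigcup_{i\in B_m}(a_i,x_i]$. The key combinatorial point is that if the two arcs cross, they must belong to the \emph{same} block: the sets $U_m$ for distinct $m$ are "non-crossing" in the sense that no arc with endpoints in $U_{m_1}$ can cross an arc with endpoints in $U_{m_2}$ for $m_1\neq m_2$ — this follows from the non-crossing condition on $\cP$ together with the fact that the decorations respect the cyclic order. Once both arcs live in the same $U_m$, any arc with endpoints among $\{y_0,y_0',y_1,y_1'\}\subseteq U_m$ automatically lies in $\cX(\cP,{\bf x})$, giving PTO.

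The main obstacle is PC, which is the genuinely analytic condition. Suppose $(\{y_0^i,y_1^i\})$ is a sequence of arcs in $\cX(\cP,{\bf x})$ with $y_0^i\to a$ from below and $y_1^i\to b$, $a\neq b$. Each arc lies in some $U_{m_i}$; passing to a subsequence (there are finitely many blocks), we may assume all arcs lie in a single block $U_m$. Since $y_0^i$ converges from below, the limit $a$ must be a limit point, say $a=a_j$, with $j\in B_m$ — here I would use that the intervals $(a_i,x_i]$ only accumulate (from below) at the $a_j$'s, and that a two-sided convergence of the $y_0^i$ would be needed to land at a non-limit point, contradicting "from below." Likewise $b$ must be a limit point $a_k$ with $k\in B_m$, or $b=x_l$ for some $l\in B_m$; in fact since convergence "from above/below" is unspecified for $y_1^i$, the cleanest statement is that $b\in\{a_j : j\in B_m\}$. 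Then I can exhibit the required sequence $(\{z_0^i,z_1^i\})$ by choosing $z_0^i \to a_j$ from above and $z_1^i\to b$ from above, with endpoints in $U_m$: because $j\in B_m$, the interval $(a_j, x_j]$ is nonempty and contained in $U_m$, so I can pick $z_0^i$ inside it approaching $a_j$ from above; similarly for $z_1^i$, using that the relevant interval on the "above" side of $b$ within $U_m$ is nonempty (this is where the block structure of $\cP$ and the fact that $b$ is an accumulation point of $U_m$ from the appropriate side come in). Assembling these $z_0^i, z_1^i$ into genuine arcs (ensuring they are not too close, i.e.\ $z_1^i\notin\{(z_0^i)^-, z_0^i, (z_0^i)^+\}$, which is automatic for large $i$ since $a\neq b$) completes PC. I expect the bookkeeping around which side of $b$ we approach from, and confirming the target intervals are nonempty in all the edge cases (adjacencies, $b$ a limit point versus $b=x_l$), to be the fiddly part of the argument.
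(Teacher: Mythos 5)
Your strategy—verify closure under clockwise rotation, PTO, and PC—is exactly the paper's, and the rotation and PTO verifications are essentially correct (the rotation check is in fact simpler than you suggest: since $a_i \notin \cZ$, each interval $(a_i,x_i]$ is automatically closed under taking predecessors in $\cZ$; no separate discussion of adjacencies is needed). The problem is in the PC argument, which contains a false claim and skips the part of the proof that carries the content.

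The false claim is that if $y_0^i \to a$ from below then $a$ must be a limit point. This is not so: under the paper's definition, a sequence in $\cZ$ converges to a point $a \in \cZ$ precisely when it is eventually constant equal to $a$, and such a sequence converges from below (and from above). So $a$, and likewise $b$, may lie in $\cZ$, and your assertion $b \in \{a_j : j \in B_m\}$ is wrong; these are in fact the easy cases for PC (a constant subsequence already does the job), but you discard them for an incorrect reason. Once you do reduce to the genuinely interesting case $a = a_j$ a limit point, you assert $j \in B_m$ and that $(a_j,x_j]$ is nonempty without proof. The argument the paper supplies is: the $y_0^i$ accumulating at $a_j$ from below lie in $(a_{j-1},x_{j-1}]$, forcing $x_{j-1} = a_j$, hence $j-1$ is an adjacency, hence $j-1$ and $j$ are in the same block; this gives both $j \in B_m$ and that $j$ is not a singleton (so $(a_j,x_j] \neq \varnothing$). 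The analogous case analysis for $b$ (is it a limit point or not? does $y_1^i$ admit a subsequence approaching from above or below?) is then needed to conclude $k \in B_m$ and $k$ not a singleton. You dismiss this as "bookkeeping," but it is precisely where the decorations' half-open/closed conventions earn their keep, and without it the proposal is not a proof of PC.
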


\begin{proposition}\label{P:surjective}
	Let $\cX$ be the aisle of a t-structure in $\cC(\cZ)$. Then there exists a $\overline{\cZ}$-decorated non-crossing partition $(\cP, {\bf x})$ of $[n]$ such that $\cX = \cX(\cP,{\bf x})$.
\end{proposition}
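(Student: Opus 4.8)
\textbf{Plan for proving Proposition \ref{P:surjective}.}

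The plan is to start from an aisle $\cX$, which by the discussion after Theorem \ref{T:torsion pairs} is a set of arcs closed under clockwise rotation and satisfying PC and PTO, and reconstruct both the non-crossing partition $\cP$ and its decoration $\mathbf{x}$ from the combinatorics of $\cX$. First I would define a partition $\cP_\cX$ of a subset $S \subseteq [n]$ exactly as in the proof of Theorem \ref{T:thick}: put $i \in S$ if some arc of $\cX$ has an endpoint in $(a_i, a_{i+1})$, and declare $i \sim j$ if some arc of $\cX$ has one endpoint in $(a_i,a_{i+1})$ and one in $(a_j,a_{j+1})$. The verification that this is an equivalence relation and a non-crossing partition of $S$ is essentially the same as in Theorem \ref{T:thick} (using closure under clockwise rotation in place of closure under $\Sigma^{\pm 1}$ to generate enough arcs, together with PTO for the non-crossing property). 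To extend $\cP_\cX$ to a genuine non-crossing partition $\cP$ of all of $[n]$, I would adjoin a singleton block $\{i\}$ for each $i \notin S$; this is the partition called $\overline{\cP_\cX}$ in the notation of Section \ref{S:thicksubcats}.

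Next comes the decoration. For each $i \in [n]$ I would set
\[
	x_i = \sup\{\, y \in (a_i,a_{i+1}] \mid \{x,y\} \in \cX \text{ for some } x\in(a_i,a_{i+1}], \text{ or } \{y,z\}\in\cX \text{ for some } z \text{ with } a_{i+1}\le z\,\},
\]
taking the supremum in $\overline\cZ$ (with the convention $x_i = a_i$ if the set is empty, which happens precisely when $i$ is a singleton with no ``incoming'' arcs). More precisely I would define $x_i$ to be the supremum of the endpoints in the interval $(a_i,a_{i+1}]$ of arcs of $\cX$ having their ``anticlockwise-most'' endpoint there; the key point, to be extracted from PC, is that this supremum is \emph{attained} whenever it is not a limit point — i.e. $\cX$ actually contains arcs reaching up to $x_i$ — and that the half-open/closed nature of the interval containing $x_i$ matches Definition \ref{D:Zdecoratedncpartition}: one gets $x_i \in (a_i,a_{i+1}]$ with equality $x_i=a_{i+1}$ allowed exactly when $i$ and $i+1$ lie in a common block (an adjacency), $x_i \in [a_i,a_{i+1})$ with $x_i = a_i$ allowed exactly when $i$ is a singleton, and $x_i \in (a_i,a_{i+1})$ otherwise. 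The case analysis here uses PTO to transport arcs across a block and PC to rule out ``converging but not containing'' pathologies near $a_{i+1}$.

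Finally I would prove $\cX = \cX(\cP,\mathbf{x})$ by double inclusion. The inclusion $\cX \subseteq \cX(\cP,\mathbf{x})$ is the direct part: any arc $\{y_0,y_1\}\in\cX$ with $y_0 \in (a_i,a_{i+1}]$ and $y_1\in(a_j,a_{j+1}]$ forces $i\sim j$ (so $i,j$ in a common block $B_m$) by definition of $\cP_\cX$, and forces $y_0 \le x_i$, $y_1\le x_j$ by definition of the decoration, hence both endpoints lie in $\bigcup_{k\in B_m}(a_k,x_k]$. For the reverse inclusion, given an arc with both endpoints in $\bigcup_{k\in B_m}(a_k,x_k]$ for some block $B_m$, I would show it lies in $\cX$: first, for a single block, use that $\cX$ contains arcs realizing each $x_k$ (by attainment of the sup), then use PTO repeatedly on crossing pairs of such arcs, together with closure under clockwise rotation, to ``fill in'' all arcs between any two points $y_0 \le x_k$, $y_1\le x_l$ with $k,l\in B_m$ — this is the analogue of the reflexivity/transitivity filling argument in Theorem \ref{T:thick}, now truncated at the decoration points. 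The main obstacle I anticipate is precisely this last filling step in the presence of nontrivial decorations: unlike the thick case, the region is not rotation-invariant, so one has to be careful that the clockwise rotations needed to set up crossings do not push endpoints past the decoration points $x_k$; handling the boundary case where some $x_k$ is a limit point (so the sup is genuinely not attained and $\cX$ only contains arcs with endpoints approaching $x_k$) will require the PC condition in an essential way, and getting the interplay between PC, PTO, and clockwise-rotation exactly right is where the real work lies.
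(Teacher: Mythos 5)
Your plan is essentially the paper's approach: define the decoration from the suprema of arc-endpoints in each interval $(a_i,a_{i+1})$, define the partition from a ``some arc connects the two regions'' equivalence relation, and then prove $\cX=\cX(\cP,\mathbf{x})$ by double inclusion, with $\Sigma$-closure replacing $\Sigma^{\pm 1}$-closure where the thick-subcategory argument is borrowed. (One small reordering: the paper defines $\mathbf{x}$ first and then defines $i\sim_\cX j$ via endpoints lying in $(a_i,x_i]$ and $(a_j,x_j]$, but as you can check these two descriptions of $\sim$ coincide since every endpoint of an arc in $\cX$ that lies in $(a_i,a_{i+1})$ automatically lies in $(a_i,x_i]$ by maximality.) However, you explicitly leave the two genuinely hard steps as anticipated obstacles rather than carrying them out, and these are exactly the content of the paper's proof.

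The first gap is verifying that $(\cP,\mathbf{x})$ is actually a $\overline\cZ$-decorated non-crossing partition, and in particular that $x_i=a_{i+1}$ \emph{forces} $i$ to be an adjacency of $\cP$ (your ``allowed exactly when'' phrasing glosses over that this is an implication you must prove, not a choice you make). The paper proves it in Lemma~\ref{L:decorated nc partition} by an induction: first showing, via $\Sigma$-closure and PTO applied to a pair of nested/crossing shifts, that every $\{y,y^{(m)}\}$ with $y\in(a_i,a_{i+1})$ and $m\geq 2$ lies in $\cX$, so that $\{y,y^{(m)}\}_m$ is a sequence in $\cX$ with endpoint converging to $a_{i+1}$ from below; then PC produces a sequence converging to $a_{i+1}$ from above, landing in $(a_{i+1},x_{i+1}]$, which is what makes $i$ an adjacency. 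The second gap is the reverse inclusion $\cX(\cP,\mathbf{x})\subseteq\cX$. This is not merely the Theorem~\ref{T:thick} filling argument ``truncated at the decoration points'': the paper reduces it to producing an auxiliary arc $\{b_0,b_1\}\in\cX$ with $b_0\in[y_0,x_i]$ and $b_1\in[y_1,x_j]$, and this is done by a four-case analysis on the relative position of a test arc, repeatedly invoking $\Sigma$-closure (clockwise only, as you feared) together with PTO, plus a separate argument for $i=j$. Only after $\{b_0,b_1\}$ is obtained can one rotate clockwise and apply PTO once more to land on $\{y_0,y_1\}$. So your plan points in the right direction but has not resolved exactly the two steps you yourself flagged as the real work.
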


Theorem \ref{T:one-to-one} is an immediate consequence of Propositions \ref{P:injective} and \ref{P:surjective} and the observation that if $(\cP,{\bf x}) \neq (\tilde{\cP}, \tilde{{\bf x}})$, then $\cX(\cP,{\bf x}) \neq \cX(\tilde{\cP},\tilde{{\bf x}})$.

\begin{proof}[Proof of Proposition \ref{P:injective}]
	Let $(\cP, {\bf x})$ be a $\overline{\cZ}$-decorated non-crossing partition of $[n]$ with $\cP = \{B_m \mid m \in I\}$ and ${\bf x} = (x_1, \ldots, x_n)$. Consider the set of arcs
	\[
		\cX = \{\{y_0,y_1\} \; \text{arc of}\; \cZ \mid y_0,y_1 \in \bigcup_{i \in B_m} (a_i,x_i] \; \text{for some} \; m \in I\}
	\] 
	corresponding to the subcategory $\cX(\cP,{\bf x})$.
	By construction, it is closed under clockwise rotation. It remains to show PTO and PC.
	
	{\bf PTO}: Assume that the arcs $\{y_0,y_1\} \in \cX$ and $\{y_0',y_1'\} \in \cX$	cross. 
	Since $\cP$ is a non-crossing partition we must have $\{y_0,y'_0,y_1,y'_1\} \subseteq \bigcup_{i \in B_m} (a_i,x_i]$ for some $m \in I$. In particular, all arcs with both endpoints in $\{y_0,y_1,y'_0,y_1'\}$ lie in $\cX$.

{\bf PC}: Let $(\{y_0^i,y_1^i\})_{i \geq 0}$ be a sequence from $\cX$ such that $y_0^i \to b_0$ from below and $y_1^i \to b_1$. We need to construct a sequence  $\{(z_0^i,z_1^i)\}_{i \geq 0}$ with  $z_0^i$ converging to $b_0$ from above and $z_1^i$  converging to $b_1$ from above.
We have $b_0 \in [a_{j},a_{j+1})$ and $b_1 \in [a_{k},a_{k+1})$ for some $j,k \in [n]$. If both $b_0$ and $b_1$ are in  $\cZ$ then we are done. Similarly, if $b_0 \in \cZ$ and there is a subsequence $\tilde{y}_1^i$ of $y_1^i$ converging to $b_1$ from above, then a subsequence of $\{b_0,\tilde{y}_1^i\}\subseteq \{y_0^i,y_1^i\}$ will serve as the desired sequence.
	
	Assume next that $b_0 = a_{j}$. Then we must have $x_{j-1} = a_{j}$, and so $j-1$ is an adjacency in $\cP$. 
	If there is a subsequence $\tilde{y}_1^i$ of $y_1^i$ converging to $b_1$ from above, then $k$ is in the same block as $j$ and $j-1$. 
	If there is a subsequence $\tilde{y}_1^i$ of $y_1^i$ converging to $b_1$ from below, then in case $b_1=a_k$ we get that $k$ is in the same block as $k-1$ and so  $k, k-1, j$ and $j-1$ are all in the same block; in case $b_1\in \cZ$ we get that $k, j$ and $j-1$ are in the same block as well. 
	
	We are left with the case where we have a sequence $(\{\tilde{y}_0^i,\tilde{y}_1^i\})_{i \geq 0}$ from $\cX$ such that $\tilde{y}_0^i \to b_0$ from below and $\tilde{y}_1^i \to b_1$ from below, $b_0\in \cZ$ and $b_1=a_k$. Changing the roles of $b_0$ and $b_1$ we can reduce to the case considered above and conclude that $k, k-1$ and $j$ are in the same block. In all the cases $j$ and $k$ are in the same block and are not singletons, so $(a_j,x_j) \neq \varnothing$ and $(a_k,x_k) \neq \varnothing$. We can pick a sequence $(\{z_0^i\})_{i \geq. 0}$ from $(a_j,x_{j})$ converging to $a_j$ from above and a sequence $(\{z_1^i\})_{i \geq 0}$ from $(a_k,x_{k})$ converging to $a_k$ from above, yielding the desired sequence $\{(z_0^i,z_1^i)\}_{i \geq 0}$.
			\end{proof}

To finish the proof of Theorem \ref{T:one-to-one} it remains to show Proposition \ref{P:surjective}. Let $\cX$ be the aisle of a t-structure in $\cC(\cZ)$. Note that the set of arcs corresponding to $\cX$ satisfies PC and PTO, and is closed under clockwise rotation. In order to prove Proposition \ref{P:surjective}, we explicitly construct a $\overline{\cZ}$-decorated non-crossing partition $(\cP(\cX),{\bf x}(\cX))$ with $\cP(\cX) = \{B_m \mid m \in I \}$ and  ${\bf x}(\cX) = (x_1, \ldots, x_n) $ (see Definition \ref{DefPPP} and Equation \eqref{EqDec}) such that
\[
	\cX = \add \{\{y_0,y_1\} \; \text{arc of}\; \cZ \mid y_0,y_1 \in \bigcup_{i \in B_m} (a_i,x_i] \; \text{for some} \; m \in I\}.
\] 
For $i \in [n]$ let us consider the set of all endpoints of arcs from $\cX$ contained in $(a_i,a_{i+1})$:
\[
	S_i = \{v \in (a_i,a_{i+1}) \mid \{v,w\} \in \cX \; \text{for some} \; w \in \cZ\}
\]
and
\begin{equation}\label{EqDec}
	x_i = \begin{cases}
			a_i & \text{if $S_i = \varnothing$};\\
			\max_{(a_i,a_{i+1})}S_i & \text{if $S_i \neq \varnothing$, and the maximum exists}; \\
			a_{i+1} & \text{if $S_i \neq \varnothing$, and $S_i$ does not have a maximum}.
		\end{cases}    
\end{equation}

Set ${\bf x}(\cX) = (x_1, \ldots, x_n)$. This will be the $\overline{\cZ}$-decoration for the non-crossing partition $\cP(\cX)$ we are going to construct via the following relation: For $i,j \in [n]$ set
\[
	i \sim_\cX j \Leftrightarrow \text{$i=j$ or there exists an arc} \; \{y_0,y_1\} \in \cX \; \text{such that} \; y_0 \in (a_i,x_i] \; \text{and} \; y_1 \in (a_j,x_j].
\]

\begin{lemma}
	The relation $\sim_{\cX}$ is an equivalence relation on $[n]$.
\end{lemma}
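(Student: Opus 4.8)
The plan is to verify the three defining properties of an equivalence relation on $[n]$ for $\sim_{\cX}$, namely reflexivity, symmetry, and transitivity, using the combinatorial features of $\cX$ established so far: that $\cX$ is closed under clockwise rotation, satisfies PTO, and that the decorations $x_i$ are defined as in \eqref{EqDec} precisely in terms of the endpoints of arcs of $\cX$ lying in $(a_i, a_{i+1})$. Reflexivity ($i \sim_\cX i$) is built into the definition. Symmetry is immediate since the condition on the pair $(y_0, y_1)$ is symmetric (an arc is an unordered pair). So the content is in transitivity, and in making sure the ``half-open'' interval $(a_i, x_i]$ interacts correctly with $\cX$ — in particular that if $x_i \in \cZ$ then the endpoint $x_i$ itself is realized by an arc of $\cX$.

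First I would record a preliminary observation that will be used repeatedly: if $v \in (a_i, x_i]$, then there exists $w \in \cZ$ with $\{v, w\} \in \cX$. For $v < x_i$ (or $v \le x_i$ with $x_i = a_{i+1}$, i.e.\ $x_i \notin (a_i,a_{i+1})$) this is because $S_i$ contains an element $\geq v$ (using the definition of $x_i$ as a supremum of $S_i$), so there is an arc $\{v', w'\} \in \cX$ with $v' \in (a_i, a_{i+1})$, $v' \ge v$; then using clockwise rotation and PTO — as in the reflexivity argument in the proof of Theorem \ref{T:thick}, shifting $\{v',w'\}$ against a rotate of itself to produce crossing arcs and invoking PTO — one obtains that $\cX$ contains every arc with both endpoints in $(a_i, a_{i+1})$ reachable this way, in particular an arc with endpoint exactly $v$. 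For the boundary case $v = x_i \in \cZ \cap (a_i, a_{i+1})$, $x_i = \max S_i$ lies in $S_i$ by definition, so directly there is $w$ with $\{x_i, w\} \in \cX$. I would state this as a short lemma or simply fold it into the proof. Similarly, I'd note that if $i$ is such that $x_i = a_i$ then $(a_i, x_i] = \varnothing$, so $i$ is forced to be a singleton and cannot be $\sim_\cX$-related to any $j \neq i$ — this keeps the equivalence classes well-behaved.

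For transitivity: suppose $i \sim_\cX j$ and $j \sim_\cX k$ with $i, j, k$ not all equal. If two of them coincide the statement is trivial or follows from symmetry, so assume $i, j, k$ pairwise distinct. Pick arcs $\{p, q\} \in \cX$ with $p \in (a_i, x_i]$, $q \in (a_j, x_j]$, and $\{q', r\} \in \cX$ with $q' \in (a_j, x_j]$, $r \in (a_k, x_k]$. Using the preliminary observation together with clockwise rotation and PTO applied inside the interval $(a_j, a_{j+1})$, I can replace these two arcs by arcs of $\cX$ sharing a common endpoint in $(a_j, x_j]$ — or, more robustly, arrange by shifting (clockwise rotation changes all four endpoints simultaneously, staying within the respective intervals as long as we do not cross the limit points) that $\{p, q\}$ and $\{q', r\}$ cross; then PTO gives that the arc with endpoints $p' \in (a_i, x_i]$ and $r' \in (a_k, x_k]$ (the appropriate one of the four ``diagonal'' arcs of the crossing configuration) lies in $\cX$, whence $i \sim_\cX k$. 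The one subtlety is checking that the crossing arc we need really has one endpoint in $(a_i, x_i]$ and the other in $(a_k, x_k]$ rather than, say, both in $(a_i,x_i]$; this follows from the cyclic order of $a_i, a_j, a_k$ around the circle and the fact that each $x_m \in (a_m, a_{m+1}]$. I expect this bookkeeping — ensuring the right one of the four arcs from the crossing configuration \eqref{ExtOfArcs}/Figure \ref{fig:cocone} is the witness, across the cases coming from the cyclic positions of $i, j, k$ — to be the main (and only real) obstacle; everything else is routine once the preliminary ``endpoint realization'' observation is in hand.
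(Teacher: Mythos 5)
Your proposal is correct and takes essentially the same route as the paper: reflexivity and symmetry are immediate, and for transitivity with $i,j,k$ pairwise distinct you take the two witness arcs, apply a suitable power of $\Sigma$ to one of them so they cross (the shift stays inside the relevant half-open intervals since $a_j$ and the other relevant limit point are accumulation points of $\cZ$), and then invoke PTO to produce the arc from $(a_i,x_i]$ to $(a_k,x_k]$. The preliminary ``endpoint realization'' observation you record is not actually needed in the paper's proof (the definition of $\sim_\cX$ already hands you arcs with endpoints in the correct intervals), but it does no harm.
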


\begin{proof}
	Reflexivity and symmetry are clear, it remains to check transitivity. Assume thus that $i \sim_{\cX} j$ and $j \sim_{\cX} k$ and $i,j,k$ are pairwise distinct. Then there exist arcs $\{y_0,y_1\} \in \cX$ and $\{y'_0,y'_1\} \in \cX$ with $y_0,y'_0 \in (a_j,x_j]$, $y_1 \in (a_i,x_i]$ and $y'_1 \in (a_k,x_k]$. 
    By applying an appropriate power of the suspension $\Sigma$ to either $\{y'_0,y'_1\}$ or $\{y_0,y_1\}$, we can assume that $\{y_0,y_1\}$ and $\{y'_0,y'_1\}$ cross.
	So by PTO we have $\{y'_1, y_1\} \in \cX$, which yields $i \sim_{\cX} k$.
\end{proof}

\begin{definition}\label{DefPPP}
	We define $\cP(\cX)$ to be the partition of $[n]$ induced by the equivalence relation $\sim_{\cX}$, that is, its blocks are given by the equivalence classes under $\sim_{\cX}$.
\end{definition}

\begin{lemma}\label{L:decorated nc partition}
	The pair $(\cP(\cX), {\bf x}(\cX))$ is a $\overline{\cZ}$-decorated non-crossing partition of $[n]$.
\end{lemma}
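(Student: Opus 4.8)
The plan is to verify the two defining properties of a $\overline{\cZ}$-decorated non-crossing partition for the pair $(\cP(\cX), {\bf x}(\cX))$: first, that $\cP(\cX)$ is a non-crossing partition of $[n]$, and second, that ${\bf x}(\cX)$ is a $\overline{\cZ}$-decoration of $\cP(\cX)$ in the sense of Definition \ref{D:Zdecoratedncpartition}.

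\emph{Non-crossing property.} Suppose $1 \leq i < k < j < l \leq n$ with $i \sim_\cX j$ and $k \sim_\cX l$; I must show $i \sim_\cX k$ (equivalently, that the two blocks coincide). By definition there are arcs $\{y_0,y_1\} \in \cX$ with $y_0 \in (a_i,x_i]$, $y_1 \in (a_j,x_j]$, and $\{z_0,z_1\} \in \cX$ with $z_0 \in (a_k,x_k]$, $z_1 \in (a_l,x_l]$. Because $i < k < j < l$, the endpoints satisfy $y_0 < z_0 < y_1 < z_1$ after applying a suitable power of $\Sigma$ to one of the arcs (clockwise rotation preserves membership in $\cX$ and, as the $x_i$ lie in the corresponding half-open intervals, rotation does not push endpoints into the wrong sectors in a way that breaks the cyclic order of $i<k<j<l$; this needs a small argument but is essentially bookkeeping). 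Hence the arcs cross, so by PTO the arc $\{y_0,z_0\}$ (with $y_0 \in (a_i,x_i]$, $z_0 \in (a_k,x_k]$) lies in $\cX$, giving $i \sim_\cX k$. This is the same mechanism as in the proof of Theorem \ref{T:thick}.

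\emph{Decoration property.} Here I must check, for each $i \in [n]$, that $x_i \in [a_i,a_{i+1})$ if $i$ is a singleton, $x_i \in (a_i,a_{i+1}]$ if $i$ is an adjacency, and $x_i \in (a_i,a_{i+1})$ otherwise. By \eqref{EqDec}, $x_i \in [a_i, a_{i+1}]$ always, so the content is in ruling out the wrong endpoints. If $i$ is \emph{not} a singleton, then by definition of $\sim_\cX$ there is an arc with an endpoint in $(a_i,x_i]$, forcing $S_i \neq \varnothing$ and hence $x_i \neq a_i$; so $x_i \in (a_i,a_{i+1}]$. If moreover $i$ is \emph{not} an adjacency, I must exclude $x_i = a_{i+1}$: if $x_i = a_{i+1}$ then $S_i$ has no maximum, so there is a sequence of arcs $\{v^m, w^m\} \in \cX$ with $v^m \in (a_i,a_{i+1})$ and $v^m \to a_{i+1}$ from below; applying PC (and using that $\cX$ is closed under clockwise rotation to extract a subsequence with controlled second endpoints) produces arcs in $\cX$ with an endpoint converging to $a_{i+1}$ from above, i.e. endpoints in $(a_{i+1}, x_{i+1}]$, which forces $i \sim_\cX i+1$, contradicting that $i$ is not an adjacency. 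Finally, if $i$ \emph{is} a singleton, then $S_i = \varnothing$ would give $x_i = a_i \in [a_i,a_{i+1})$ directly; if $S_i \neq \varnothing$ I note that a singleton $i$ can still have arcs with both endpoints in $(a_i, x_i]$, and I must still exclude $x_i = a_{i+1}$ — again via PC as above, which would force $i \sim_\cX i+1$, i.e. $i$ is an adjacency, contradiction. So $x_i \in [a_i, a_{i+1})$.

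\emph{Main obstacle.} The routine-looking but genuinely delicate step is the PC argument used to exclude $x_i = a_{i+1}$ when $i+1$ fails to be in the block of $i$: one has a sequence of arcs whose first endpoints climb to $a_{i+1}$ from below, but to apply PC one needs the second endpoints to converge to \emph{some} point $b \neq a_{i+1}$, which requires passing to a subsequence (possible since $\overline{\cZ}$ is compact) and then carefully tracking where PC places the new sequence relative to $x_{i+1}$ and the limit points — in particular ensuring the new arcs genuinely have an endpoint in $(a_{i+1}, x_{i+1}]$ rather than merely in $[a_{i+1}, \cdot)$. Handling the case $b = a_{i+1}$ as well (second endpoints also climbing to $a_{i+1}$) needs the symmetric swap of roles, exactly as in the proof of Proposition \ref{P:injective}. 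I expect everything else — reflexivity/symmetry already done, the crossing bookkeeping, the membership $x_i \in [a_i,a_{i+1}]$ — to be straightforward.
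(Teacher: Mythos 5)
Your overall structure mirrors the paper's proof: show $\cP(\cX)$ is non-crossing via PTO, then verify the decoration conditions by ruling out the forbidden endpoints. Two comments, one minor and one substantive.

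The minor one: in the non-crossing argument you worry about applying $\Sigma$ to make the arcs cross, but no rotation is needed. Because $1 \leq i < k < j < l \leq n$ and the intervals $(a_i,x_i], (a_k,x_k], (a_j,x_j], (a_l,x_l]$ are pairwise disjoint subsets of the disjoint sectors, the four endpoints automatically satisfy $y_0 < z_0 < y_1 < z_1$ in cyclic order, so the arcs already cross. This is how the paper does it, and your "bookkeeping" caveat is moot.

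The substantive issue is the PC step ruling out $x_i = a_{i+1}$ when $i$ is not an adjacency. You correctly identify this as the main obstacle, but your proposed resolution does not close it. Starting from an arbitrary sequence of arcs $\{v^m,w^m\} \in \cX$ with $v^m \to a_{i+1}$ from below and passing to a subsequence gives $w^m \to b$ for some $b \in \overline{\cZ}$, and if $b \neq a_{i+1}$ you can apply PC. But if $b = a_{i+1}$ with $w^m$ also converging from below, the PC hypothesis $a \neq b$ fails and your suggested "symmetric swap of roles as in Proposition \ref{P:injective}" does not apply: that swap in the paper is used in a situation where the two limits $b_0$ and $b_1$ are \emph{distinct} (one in $\cZ$, one a limit point), whereas here the two limits coincide, so exchanging the roles of the two endpoints changes nothing. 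There is no way to invoke PC directly on such a sequence. What the paper does instead, and what is genuinely needed, is an inductive construction showing $\{y, y^{(m)}\} \in \cX$ for every fixed $y \in (a_i,a_{i+1})$ and every $m \geq 2$, using $\Sigma$-closure and PTO repeatedly. This manufactures a sequence of arcs in $\cX$ with one endpoint \emph{constant} at $y$ and the other converging to $a_{i+1}$ from below, to which PC applies with $a = a_{i+1} \neq y = b$, and the resulting arcs then land in $(a_{i+1},x_{i+1}]$ and witness $i \sim_\cX i+1$. Without this construction (or an equivalent one), the case $b = a_{i+1}$ is a genuine gap in your proof, not a routine detail.
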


\begin{proof}
Let us first prove that the partition $\cP(\cX)$ is non-crossing. Let $B_m$ and $B_{m'}$ be blocks  of $\cP(\cX)$ with $i,j \in B_m$ and $k,l \in B_{m'}$ such that $1 \leq i < k < j < l \leq n$. Thus there exist arcs $\{y_0,y_1\} \in \cX$ and $\{y'_0,y'_1\} \in \cX$ with
	\[
		y_0 \in (a_i,x_i], \; y_1\in (a_j,x_j], \; y'_0 \in (a_k,x_k], \; y'_1 \in (a_l,x_l].   
	\]
In particular the arcs $\{y_0,y_1\}$ and $\{y'_0,y'_1\}$ cross, so by PTO all arcs with both endpoints in $\{y_0,y'_0,y_1,y'_1\}$ lie in $\cX$. Therefore $B_m = B_{m'}$ and $\cP(\cX)$ is a non-crossing partition.

Let us now check that ${\bf x}(\cX)$ gives a $\overline{\cZ}$-decoration of $\cP$. All we need to show is that $x_i = a_i$ implies that $\{i\}$ is a singleton of $\cP(\cX)$ and if $x_i = a_{i+1}$ then $i$ is an adjacency of $\cP(\cX)$.
Indeed, if we have $x_i = a_i$, then $S_i = \varnothing$, so there exists no $\{v,w\} \in \cX$ with $v \in (a_i,a_{i+1})$. Thus $i \sim_{\cX} j$ if and only if $i =j$, and $\{i\}$ is a singleton of $\cP(\cX)$.

If we have $x_i = a_{i+1}$, then $S_i \neq \varnothing$ and the maximum of $S_i$ does not exist. Thus, since $\cX$ is closed under clockwise rotation, every $y \in (a_i,a_{i+1})$ is an endpoint of an arc in $\cX$.
We now show inductively that in fact for every $y \in (a_i,a_{i+1})$ and $m \geq 2$, the arc $\{y,y^{(m)}\}$ lies in $\cX$, thus constructing a sequence of arcs with endpoints $y^{(m)} \to a_{i+1}$ from below. For the base case $m = 2$ let $y \in (a_i,a_{i+1})$ and pick $z \in \cZ$ such that $\{y^{(2)}, z\} \in \cX$. If $z = y$ we are done. Else, by $\Sigma$-closure, the arc $\Sigma^2 \{y^{(2)}, z\} = \{y,z^{(-2)}\}$ also lies in $\cX$. If $z^{(-2)} = y^{(2)}$ we are done. In all other cases, the arcs $\{y,z^{(-2)}\}$ and $\{y^{(2)},z\}$ cross 
and by PTO we have $\{y,y^{(2)}\} \in \cX$ as desired. Assume now the assertion holds for all $2 \leq k < m$. By induction hypothesis, we have $\{y^+,y^{(m)}\} \in \cX$ and by $\Sigma$-closure the arc $\{y,y^{(m-1)}\}$ lies in $\cX$. Since these two arcs cross, by PTO we have $\{y,y^{(m)}\} \in \cX$ which concludes the induction.

Fix now $y \in (a_i,a_{i+1})$. Then $\{y,y^{(m)}\}_{m \geq 0}$ is a sequence of arcs from $\cX$ with $y^{(m)} \to a_{i+1}$ from below. By PC there exists a sequence of arcs $\{y,z^i\}_{i \geq 0}$ with $z^i \to a_{i+1}$ from above, without loss of generality we can assume $z^i \in (a_{i+1},x_{i+1}]$. Now $y \in (a_i,a_{i+1}) = (a_i,x_i]$, and so $i$ is an adjacency.
\end{proof}

\begin{proof}[Proof of Proposition \ref{P:surjective}]
	Let $\cX$ be the aisle of a t-structure in $\cC(\cZ)$. Consider the associated $\overline{\cZ}$-decorated non-crossing partition $(\cP(\cX), {\bf x}(\cX))$ with $\cP(\cX) = \{B_m \mid m \in I\}$ and ${\bf x} (\cX)= (x_1, \ldots, x_n)$. We show that $\cX = \cX(\cP(\cX),{\bf x}(\cX))$.
The inclusion $\cX \subseteq \cX(\cP(\cX), {\bf x}(\cX))$ is clear from the construction.
It remains to show $\cX(\cP(\cX), {\bf x}(\cX)) \subseteq \cX$. Let $\{y_0,y_1\} \in \cX(\cP(\cX), {\bf x}(\cX))$. So we have $y_0 \in (a_i,x_i]$ and $y_1 \in (a_j,x_j]$ for some $i \sim_{\cX} j$. We first prove that there is an arc $\{b_0,b_1\} \in \cX$ with $b_0 \in [y_0,x_i]$ and $b_1 \in [y_1, x_j]$. 

Assume first that $i = j$, and without loss of generality, $a_i < y_1 < y_0 \leq x_i$. By maximality of $x_i$, there exists an arc $\{b_0,c\} \in \cX$ with $b_0 \in [y_0,x_i]$. If $c \in (a_i,a_{i+1})$, then without loss of generality we may assume $c < b_0^{-2} \leq x_i$, else we set $b_1 = c$. Applying $\Sigma^2$ and using PTO we get that $\{b_0,b_0^{(-2)}\}\in\cX$ and set $b_1=b_0^{(-2)}$.

Assume now that $i \neq j$. By definition of $\sim_{\cX}$ there exists a $\{c_0,c_1\} \in \cX$ such that $c_0 \in (a_i,x_i]$ and $c_1 \in (a_j,x_j]$. We will first find $b_0 \in [y_0,x_i]$ such that $\{b_0,c_1\} \in \cX$. If we already have $c_0 \in [y_0,x_i]$, set $b_0=c_0$, else, take an arc $\{d_0,d_1\} \in \cX$ such that $d_0 \in [y_0,x_i] \subseteq (c_0,x_i]$. Such an arc exists by maximality of $x_i$. We distinguish four cases, depending on where the point $d_1$ is positioned.

Case 1: Assume $d_1 = c_1$. Setting $b_0 = d_0$ yields a point $b_0 \in [y_0,x_i]$, with $\{b_0,c_1\} \in \cX$.

Case 2: Assume $d_1 \in (c_1,c_0)$. Then $\{c_0,c_1\}$ and $\{d_0,d_1\}$ cross and by PTO, the arc $\{d_0,c_1\}$ lies in $\cX$. 
Setting $b_0 = d_0$ yields a point $b_0 \in [y_0,x_i]$, with $\{b_0,c_1\} \in \cX$.

Case 3: Assume $d_1 \in [c_0,d_0)$. Note that since $\{d_1,d_0\} \in \cX$, by $\Sigma$-closure and PTO we also have $\{d_1^{(-k)}, d_0\} \in \cX$ for all $k \geq 0$. Picking $m >0$ such that $d_1^{(-m)} = c_0^{-}$, we get the arc $\{d_0,d_1^{(-m)}\} \in \cX$ reducing the situation to the previous case.

Case 4: Assume $d_1 \in (d_0,c_1)$. If $d_1 \in (d_0^+,x_i]$, then $d_0 \in [c_0,d_1)$ and exchanging the roles of $d_0$ and $d_1$ in Case 3 we find $b_0 \in [y_0,x_i]$, with $\{b_0,c_1\} \in \cX$.
Else, if $d_1 \in (x_i,c_1)$ picking $m \geq 0$ such that $d_0^{(-m)} = c_0$, by $\Sigma$-closure of $\cX$ the arc $\{c_0,d_1^{-m}\}$ lies in $\cX$ and crosses $\{d_0,d_1\}$ and by PTO, the arc $\{c_0,d_1\}$ lies in $\cX$. The arc $\{c_0^-,d_1^-\}$ lies in $\cX$ as well and crosses $\{d_0,d_1\}$. By PTO we get that $\{c_0^-,d_0\}\in \cX$, hence we can reduce to Case 2 again.

In all the cases, we have found $b_0 \in [y_0,x_i]$ such that $\{b_0,c_1\} \in \cX$. Repeating this process with $b_0$ in the role of $c_1$ and $c_1$ in the role of $c_0$ yields the desired arc $\{b_0,b_1\} \in \cX$ with $b_0 \in [y_0,x_i]$ and $b_1 \in [y_1,x_j]$.
There exist an $m,n \in \bZ_{\geq 0}$ such that $b_1^{(-m)} = y_1$ and $b_0^{(-n)} = y_0$. In case $m=n$, we immediately get that $\{y_0,y_1\} \in \cX$. In case $m\neq n$ the arcs $\{b_0^{(-m)},y_1\}$ and $\{y_0,b_1^{(-n)}\}$ cross and belong to $\cX$, by  PTO we obtain $\{y_0,y_1\} \in \cX$ as desired.
\end{proof}

\subsection{Coaisles and Kreweras complement}

We will construct the coaisle of a $t$-structure on $\cC(\cZ)$ from its aisle using the Kreweras complement, which provides the self-duality of the lattice of non-crossing partitions.  Let us recall the definition of the Kreweras complement of a non-crossing partition $\cP=\{B_m\mid m\in I\}$. For that consider the poset $[2n] \cong \{1, 1', 2, 2', \ldots, n,n'\}$ with $1<1'<2<2'<\dots<n<n'$. The partition $\cP$ gives a non-exhaustive non-crossing partition of $[2n]$, if we identify elements with the same labels. There exists a unique maximal non-crossing partition $\tilde{\cP}=\{\tilde{B}_m\mid m\in \tilde{I} \}$  of $[2n]$, which contains all the blocks  $\cP$. The blocks $\tilde{\cP}\setminus \cP$ gives a non-crossing partition of the poset $[n] \cong \{1', 2', \ldots, n'\}$ with the induced order. We denote this partition of $[n]$ by $\cP^c$. It is called the {\bf Kreweras complement of $\cP$}, and was introduced by Kreweras in \cite{K}. From the construction, we see that after applying the construction of the Kreweras complement twice we obtain $\cP$ rotated once in the clockwise direction. 

\begin{remark}\label{R:Kreweras}
    Note that if $i \neq j$ are elements in $[n]$ which lie in the same block of a non-crossing partition $\cP$, then they do not lie in the same block of its Kreweras complement $\cP^c$. Else, in the associated non-crossing partition $\tilde{\cP}$ of $[2n] \cong \{1,1', \ldots, n,n'\}$, the block $B$ containing $i$ and $j$, and the block $B'$ containing $i'$ and $j'$ would cross.
\end{remark}

Let us describe the coaisle of a t-structure in terms of the combinatorial model. Figure \ref{fig:coaisle} provides an illustration of the aisle and the coaisle for the example from Figure \ref{fig:aisle}. The dark grey regions on the right hand side describe the coaisle, and mark the Kreweras complement $\cP^c = \{\{1,2\}, \{3,6\}, \{4\}, \{5\}\}$ of the non-crossing partition $\cP = \{\{1,3\}, \{2\}, \{4,5,6\}\}$ marked by the light grey areas on the left hand side describing the aisle. The heart of the corresponding t-structure is given by the four indecomposable objects $\{x_1^{--},x_1\}, \{x_3^{--},x_3\}, \{x_4^{--},x_4\}$ and $\{x_6^{--},x_6\}$ (see Corollary \ref{C:heart} for details). 

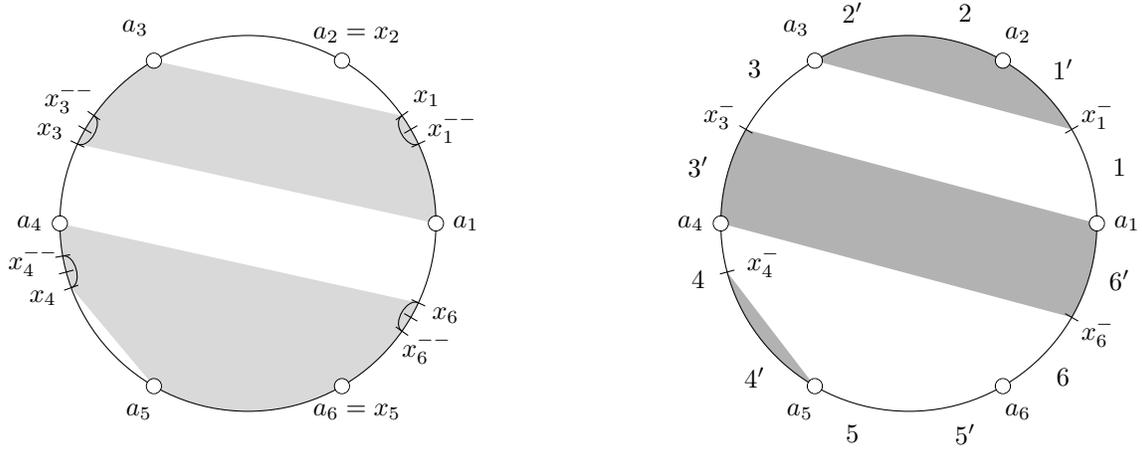
\begin{figure}[ht]
\begin{center}
\begin{tikzpicture}[scale=5]

	
	
	
	
	
	\fill[fill=gray!30]    (0:0.5) arc (0:35:0.5) -- (120:0.5) arc(120:155:0.5) -- (0:0.5);
	

\fill[fill=gray!30]    (180:0.5) arc (180:200:0.5) -- (240:0.5) arc(240:335:0.5) -- (180:0.5);

	
	
        \draw (0,0) circle(0.5cm);
        
	\draw (0:0.5) node[fill=white,circle,inner sep=0.065cm] {} circle (0.02cm);
	\node at (0:0.58){$a_1$};
	\draw (60:0.5) node[fill=white,circle,inner sep=0.065cm] {} circle (0.02cm);
	\node at (60:0.58){$a_2 = x_2$};
	\draw (120:0.5) node[fill=white,circle,inner sep=0.065cm] {} circle (0.02cm);
	\node at (120:0.6){$a_3$};
	\draw (180:0.5) node[fill=white,circle,inner sep=0.065cm] {} circle (0.02cm);
	\node[] (a_4) at (180:0.58){$a_4$};	
	\draw (240:0.5) node[fill=white,circle,inner sep=0.065cm] {} circle (0.02cm);
	\node[] (a_5) at (240:0.58){$a_5$};	
	\draw (300:0.5) node[fill=white,circle,inner sep=0.065cm] {} circle (0.02cm);
	\node[] (a_6) at (300:0.58){$a_6 = x_5$};	
	
	

	
	\draw (35:0.52) -- (35:0.48);
	\node (x_1) at (35:0.58){$x_{1}$};
	
	\draw (200:0.52)  -- (200:0.48);
	\node (x_4) at (200:0.58){$x_{4}$};
	
	\draw (155:0.52)  -- (155:0.48);
	\node (x_{3}) at (155:0.58){$x_3$};
	
	\draw (335:0.52)  -- (335:0.48);
	\node (x_{6}) at (335:0.58){$x_6$};
	
	
	\draw (30:0.52)  -- (30:0.48);
	\node (x_1-) at (30:0.58){};
	
	\draw (195:0.52)  -- (195:0.48);
	\node (x_4-) at (195:0.58){};
	
	\draw (150:0.52)  -- (150:0.48);
	\node (x_{3}-) at (150:0.58){};
	
	\draw (330:0.52)  -- (330:0.48);
	\node (x_{6}-) at (330:0.58){};
	
	
	\draw (25:0.52)  -- (25:0.48);
	\node (x_1--) at (25:0.6){$x_{1}^{--}$};
	
	\draw (190:0.52)  -- (190:0.48);
	\node (x_4--) at (190:0.58){$x_{4}^{--}$};
	
	\draw (145:0.52)  -- (145:0.48);
	\node (x_{3}--) at (145:0.58){$x_3^{--}$};
	
	\draw (325:0.52)  -- (325:0.48);
	\node (x_{6}--) at (325:0.58){$x_6^{--}$};
	
	
	\draw (25:0.5) to[out=180+25,in=180+35] (35:0.5);
	\draw (145:0.5) to[out=180+145,in=180+155] (155:0.5);
	\draw (190:0.5) to[out=180+190,in=180+200] (200:0.5);
	\draw (325:0.5) to[out=180+325,in=180+335] (335:0.5);
	
	\begin{scope}[xshift=50]
	
	\fill[fill=gray!60]    (30:0.5) arc (30:120:0.5) -- (30:0.5);
	
	\fill[fill=gray!60]    (150:0.5) arc (150:180:0.5) -- (330:0.5) arc(330:360:0.5) -- (150:0.5);
	
	\fill[fill=gray!60]    (195:0.5) arc (195:240:0.5) -- (195:0.5);

	
	
	

	
	
        \draw (0,0) circle(0.5cm);
        
	\draw (0:0.5) node[fill=white,circle,inner sep=0.065cm] {} circle (0.02cm);
	\node at (0:0.58){$a_1$};
	\draw (60:0.5) node[fill=white,circle,inner sep=0.065cm] {} circle (0.02cm);
	\node at (60:0.58){$a_2$};
	\draw (120:0.5) node[fill=white,circle,inner sep=0.065cm] {} circle (0.02cm);
	\node at (120:0.6){$a_3$};
	\draw (180:0.5) node[fill=white,circle,inner sep=0.065cm] {} circle (0.02cm);
	\node[] (a_4) at (180:0.58){$a_4$};	
	\draw (240:0.5) node[fill=white,circle,inner sep=0.065cm] {} circle (0.02cm);
	\node[] (a_5) at (240:0.58){$a_5$};	
	\draw (300:0.5) node[fill=white,circle,inner sep=0.065cm] {} circle (0.02cm);
	\node[] (a_6) at (300:0.58){$a_6$};	
	
	

	
	
	
	
	
	
	\draw (30:0.52)  -- (30:0.48);
	\node (x_1-) at (30:0.58){$x_1^-$};
	
	\draw (195:0.52)  -- (195:0.48);
	\node (x_4-) at (195:0.4){$x_4^-$};
	
	\draw (150:0.52)  -- (150:0.48);
	\node (x_{3}-) at (150:0.58){$x_{3}^-$};
	
	\draw (330:0.52)  -- (330:0.48);
	\node (x_{6}-) at (330:0.58){$x_{6}^-$};
	
	
	
	
	
	
	

    \node (1) at (15:0.58){$1$};
    \node (1') at (45:0.58){$1'$};
    \node (2) at (75:0.58){$2$};
    \node (2') at (105:0.58){$2'$};
    \node (3) at (135:0.58){$3$};
    \node (3') at (165:0.58){$3'$};
    \node (4) at (195:0.58){$4$};
    \node (4') at (225:0.58){$4'$};
    \node (5) at (255:0.58){$5$};
    \node (5') at (285:0.58){$5'$};
    \node (6) at (315:0.58){$6$};
    \node (6') at (345:0.58){$6'$};

	\end{scope}

  \end{tikzpicture}
  \end{center}
  \caption{
  An illustration of the aisle on the left, along with its coaisle on the right.}
\label{fig:coaisle}
\end{figure}

\begin{corollary}\label{C:coaisle}
    Let $(\cX,\cY)$ be a t-structure in $\cC(\cZ)$, with aisle $\cX$ corresponding to the $\overline{\cZ}$-decorated non-crossing partition $(\cP, \bf{x})$ with decoration ${\bf x} = (x_1, \ldots, x_n)$. 
    For $i = 1, \ldots, n$, set
    \[
        y_i = \begin{cases}
                x_i^{-} & \text{if } x_i \in \cZ\\
                x_i & \text{else}.
            \end{cases}
    \]
    and let $\cP^c = \{B'_m \mid m \in I'\}$ be the Kreweras complement of $\cP$. 
    Then
    \[
        \cY = \add \{\{z_0,z_1\} \; \text{arc of}\; \cZ \mid z_0,z_1 \in \bigcup_{i \in B'_m} [y_i,a_{i+1}) \; \text{for some} \; m \in I'\}.
    \]
\end{corollary}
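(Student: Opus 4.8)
The coaisle of a t-structure is recovered from its aisle by $\cY=\cX^{\perp}$, so the plan is to compute this orthogonal explicitly for $\cX=\cX(\cP,{\bf x})$. Since $\cC(\cZ)$ is Krull--Schmidt with all nonzero indecomposables given by arcs, an arc $\{z_0,z_1\}$ lies in $\cY$ precisely when $\Hom_{\cC(\cZ)}(\{y_0,y_1\},\{z_0,z_1\})=0$ for every arc $\{y_0,y_1\}\in\cX$. Rewriting $\{z_0,z_1\}=\Sigma\{z_0^{+},z_1^{+}\}$ and using the description of the morphism spaces of $\cC(\cZ)$, this is equivalent to the statement that the arc $\{z_0^{+},z_1^{+}\}$ crosses no arc of $\cX$. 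Thus everything reduces to a purely combinatorial problem: describe the arcs crossing no arc of $\cX=\cX(\cP,{\bf x})$, and then specialise to the arc $\{z_0^{+},z_1^{+}\}$.

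For the combinatorial core, write $U_m=\bigcup_{i\in B_m}(a_i,x_i]$ for each block $B_m$ of $\cP$, so that by Theorem~\ref{T:one-to-one} the subcategory $\cX$ consists exactly of the direct sums of arcs with both endpoints in a common $U_m$. The first observation is elementary: if points $p,q$ lie in a common $U_m$ and on the two different open arcs determined by a pair $\{w_0,w_1\}$, then $\{p,q\}$ is automatically an honest arc and it crosses $\{w_0,w_1\}$ (the potential obstruction $q\in\{p^{-},p,p^{+}\}$ is impossible because $p$ and $q$ are separated by $w_0$ and $w_1$). Consequently an arc $\{w_0,w_1\}$ crosses some arc of $\cX$ if and only if some $U_m$ meets both open arcs cut out by $w_0$ and $w_1$; equivalently, $\{w_0,w_1\}$ is crossing-free exactly when every $U_m$ is contained in one of the two closed arcs determined by $\{w_0,w_1\}$.

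It remains to match this condition with the Kreweras complement. The idea is to read the combinatorial disc, cut at the decoration points, as the $2n$-element cyclic poset $\{1,1',\dots,n,n'\}$ from the definition of $\cP^{c}$: the ``unprimed'' region $i$ corresponds to the aisle datum $(a_i,x_i]$ and the ``primed'' region $i'$ to the coaisle datum $[y_i,a_{i+1})$, with the evident degenerate readings when $x_i$ is a limit point, that is when $i$ is a singleton or an adjacency of $\cP$ (in the adjacency case the primed region is empty). Under this dictionary each $U_m$ is the union of the unprimed regions indexed by a block of $\cP$. I would then show that, for $z_0\in(a_i,a_{i+1})$ and $z_1\in(a_j,a_{j+1})$, the arc $\{z_0^{+},z_1^{+}\}$ is crossing-free if and only if $z_0\in[y_i,a_{i+1})$, $z_1\in[y_j,a_{j+1})$, and $i$ and $j$ lie in a common block of $\cP^{c}$. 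For the forward implication: if $z_0\notin[y_i,a_{i+1})$, then, using that $\cX$ contains every arc with both endpoints in $(a_i,x_i]$, one writes down such an arc straddling $\{z_0^{+},z_1^{+}\}$; and if $z_0\in[y_i,a_{i+1})$, $z_1\in[y_j,a_{j+1})$ but $i$ and $j$ lie in different blocks of $\cP^{c}$, then by maximality of the non-crossing partition $\widetilde{\cP}$ of $[2n]$ some block of $\cP$ has unprimed regions on both sides of the chord ``through $i'$ and $j'$'', again yielding a straddling arc of $\cX$. For the reverse implication, a straddling $U_m$ would force the block of $\cP$ indexing it to cross, inside $[2n]$, the block of $\cP^{c}$ containing $i$ and $j$, contradicting that $\widetilde{\cP}$ is non-crossing. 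Combining this with the reduction of the first paragraph gives precisely the formula for $\cY$ in the statement.

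The conceptual content is short, and the main obstacle is the bookkeeping of the third paragraph: fixing the ``cut at $x_i$'' dictionary carefully enough that the half-open intervals, the shift $z\mapsto z^{+}$, and the degenerate cases of Definition~\ref{D:Zdecoratedncpartition} (a decoration equal to a limit point, i.e.\ a singleton or adjacency of $\cP$) all line up with the $[2n]$-model of $\cP^{c}$, together with making the two ``separating block'' constructions airtight in the low-valence edge cases. An alternative is to verify directly that $\cX$ and the claimed subcategory form a torsion pair and invoke uniqueness of the coaisle; but the orthogonality half of that still requires the same crossing analysis, and producing the approximation triangles is strictly more work, so computing $\cX^{\perp}$ directly is the economical route.
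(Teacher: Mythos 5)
Your argument is correct and follows essentially the same route as the paper: both reduce membership in the coaisle to the purely combinatorial statement that a suitable shift of the arc is crossing-free against $\cX$, then translate this condition through the $[2n]$-model of the Kreweras complement. The only cosmetic difference is that the paper phrases the computation as a description of $\Sigma^{-1}\cY$ on the intervals $[x_i,a_{i+1})$ and then shifts back, whereas you keep $\cY$ itself and absorb the shift into the $z\mapsto z^{+}$ rewriting; the content is identical.
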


\begin{proof}
Let us compute $\Sigma^{-1}\cY$. We show that 
\[
 \Sigma^{-1} \cY = \add\{\{z_0,z_1\} \; \text{arc of}\; \cZ \mid z_0,z_1 \in \bigcup_{i \in B'_m} [x_i,a_{i+1}) \; \text{for some} \; m \in I'\}.
\]
An arc $\{w_0,w_1\}$ belongs to $\Sigma^{-1}\cY$ if and only if it does not cross any arc from $\cX$. Assume that that is the case. Clearly $w_0$ and $w_1$ cannot lie in $(a_i,x_i)$ for any $i$, that is $w_0\in [x_i,a_{i+1})$ and $w_1\in [x_j,a_{j+1})$ for some $i$ and $j$. Assume $i$ and $j$ are not in the same block of $\cP^c$. Consider the non-crossing partition $\cP \cup \cP^c$ of $[2n] \cong \{1,1',2,2', \ldots, n,n'\}$, where $\cP$ is viewed as a non-crossing partition of $\{1, 2, \ldots, n\}$ and $\cP^c$ is viewed as a non-crossing partition of $\{1',2', \ldots, n'\}$. Thus $i'$ and $j'$ are not in the same block of $\cP \cup \cP^c$. Therefore, there must be $k$ and $l$ in the same block of $\cP$ such that $i' < k < j' < l$ or $k < i' < l < j'$. It follows that $x_i < x_k \leq x_j < x_l$ or $x_k \leq x_i < x_l \leq x_j$. 
Since the block of $\cP$ containing $k$ and $l$ is not a singleton, we can always find an arc in $\cX$ with endpoints in $(a_k,x_k]$ and $(a_l,x_l]$ which crosses $\{w_0,w_1\}$. This shows that the left hand side is contained in the right hand side.

Conversely, assume that $\{w_0,w_1\}$ lies in $\{\{z_0,z_1\} \; \text{arc of}\; \cZ \mid z_0,z_1 \in \bigcup_{i \in B'_m} [x_i,a_{i+1}) \; \text{for some} \; m \in I'\}$. If $i$ and $j$ are in the same block of $\cP^c$, we cannot find $k$ and $l$ with the properties as above and so the arc $\{w_0,w_1\}$ does not cross any arc from $\cX$ and thus lies in $\Sigma^{-1}\cY$. The claim follows.     
\end{proof}

The {\bf heart} of the t-structure $(\cX,\cY)$ is the subcategory $\cX \cap \Sigma \cY$. It is an abelian category, with the short exact sequences induced by the triangles in $\cC(\cZ)$ \cite{BBD}.

\begin{corollary}\label{C:heart}
    Let $(\cX,\cY)$ be a t-structure in $\cC(\cZ)$ with associated $\overline{\cZ}$-decorated non-crossing partition $(\cP,{\bf x})$ with $\bf x = (x_1, \ldots,  x_n)$. Its heart is the subcategory
    \[
        \add\{ \{x_i^{(-2)},x_i\} \mid x_i \in \cZ)\} \cong \mathrm{mod}(\underbrace{\bK\times \dots \times \bK}_{m \text{ times}}). 
    \]
    where $m = |\{i \in [n] \mid x_i \in \cZ\}|$. 
\end{corollary}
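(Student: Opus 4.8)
The strategy is to combine the explicit description of the aisle $\cX=\cX(\cP,{\bf x})$ from Theorem~\ref{T:one-to-one} with the explicit description of the coaisle $\cY$ from Corollary~\ref{C:coaisle}, to compute the intersection $\cX\cap\Sigma\cY$ arc by arc, and then to read off the additive and abelian structure of the heart from the Hom-spaces of $\cC(\cZ)$. As a first step I would record a formula for $\Sigma\cY$: applying $\Sigma^2$ to the description of $\Sigma^{-1}\cY$ obtained inside the proof of Corollary~\ref{C:coaisle}, and using that every limit point of $\cZ$ is two-sided (so that shifting the $\cZ$-points of a half-open interval $[\,\cdot\,,a_{i+1})$ by $\Sigma^{\pm1}$ only moves its lower endpoint), one gets
\[
  \Sigma\cY=\add\Big\{\{w_0,w_1\}\ \text{arc of}\ \cZ\ \Big|\ w_0,w_1\in\bigcup_{i\in B'_m}[x_i^{(-2)},a_{i+1})\ \text{for some}\ m\in I'\Big\},
\]
where $\cP^c=\{B'_m\mid m\in I'\}$ is the Kreweras complement of $\cP$, and where $[x_i^{(-2)},a_{i+1})$ is read as $(a_i,a_{i+1})$ when $x_i=a_i$ and as $\varnothing$ when $x_i=a_{i+1}$.

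Next I would identify the indecomposable objects of the heart. An indecomposable arc $\{u_0,u_1\}$ lies in $\cX\cap\Sigma\cY$ if and only if $u_0,u_1\in\bigcup_{i\in B_m}(a_i,x_i]$ for some block $B_m$ of $\cP$ and $u_0,u_1\in\bigcup_{i\in B'_{m'}}[x_i^{(-2)},a_{i+1})$ for some block $B'_{m'}$ of $\cP^c$. Since an endpoint lying in $(a_i,x_i]$ lies in the interval $(a_i,a_{i+1})$, and likewise on the coaisle side, the index of the interval containing $u_0$ (and the one containing $u_1$) is determined; so $u_0\in(a_i,x_i]$, $u_1\in(a_j,x_j]$ with $i,j$ in the same block of $\cP$ and also in the same block of $\cP^c$. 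By Remark~\ref{R:Kreweras} this forces $i=j$, hence $u_0,u_1\in(a_i,x_i]\cap[x_i^{(-2)},a_{i+1})$. This intersection is empty unless $x_i\in\cZ$ (it is already empty on the aisle side when $x_i=a_i$, and on the coaisle side when $x_i=a_{i+1}$), in which case it equals $\{x_i^{(-2)},x_i^-,x_i\}$, whose only two elements spanning an arc are $x_i^{(-2)}$ and $x_i$. Conversely one checks directly that each $H_i:=\{x_i^{(-2)},x_i\}$ with $x_i\in\cZ$ indeed lies in both $\cX$ and $\Sigma\cY$. Thus the heart is $\add\{H_i\mid x_i\in\cZ\}$, these $H_i$ are pairwise non-isomorphic, and the heart has exactly $m=|\{i\in[n]\mid x_i\in\cZ\}|$ indecomposable objects.

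Finally I would compute the Hom-spaces of $\cC(\cZ)$ between the $H_i$ using the $2$-Calabi-Yau duality $\Hom(A,B)\cong D\Hom(B,\Sigma^2A)$ together with the rule that $\Hom(A,\Sigma B)$ is nonzero exactly when $A$ and $B$ cross. For $i\neq j$ the arcs $H_j$ and $\Sigma H_i=\{x_i^{(-3)},x_i^-\}$ have all their endpoints inside the two disjoint intervals $(a_j,a_{j+1})$ and $(a_i,a_{i+1})$, so they do not cross and therefore $\Hom(H_i,H_j)\cong D\Hom(H_j,\Sigma(\Sigma H_i))=0$; whereas $H_i$ and $\Sigma H_i$ do cross, their four endpoints interleaving as $x_i^{(-3)}<x_i^{(-2)}<x_i^-<x_i$, so $\End(H_i)\cong D\Hom(H_i,\Sigma(\Sigma H_i))\cong\bK$. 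Consequently the heart decomposes as the product of the full subcategories $\add\{H_i\}$, one for each $i$ with $x_i\in\cZ$; each such factor is an abelian category in which every object is a finite direct sum of copies of the indecomposable $H_i$ with $\End(H_i)=\bK$, which forces $H_i$ to be a simple object (a nonzero subobject of $H_i$ is a sum of copies of $H_i$ mapping monomorphically into $H_i$, hence is all of $H_i$) and thus $\add\{H_i\}\simeq\mathrm{mod}(\bK)$. Taking the product over the $m$ relevant indices gives the heart $\simeq\mathrm{mod}(\bK^m)$.

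The main obstacle I anticipate is the bookkeeping in the first two steps: shifting the half-open intervals of Corollary~\ref{C:coaisle} under $\Sigma^2$ while keeping track of the three possibilities for each decoration $x_i$ (namely $x_i\in\cZ$, $x_i=a_i$, $x_i=a_{i+1}$), and then intersecting the aisle and coaisle descriptions without gaining or losing any arcs near the limit points. Remark~\ref{R:Kreweras} is the crucial input that collapses the intersection onto a single interval $(a_i,a_{i+1})$; once the heart has been identified with $\add\{H_i\mid x_i\in\cZ\}$, the Hom computation and the final identification with $\mathrm{mod}(\bK^m)$ are essentially formal.
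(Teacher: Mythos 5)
Your proof is correct, and it follows the natural route that the paper leaves implicit (the corollary is stated without proof, as an application of Theorem \ref{T:one-to-one} and Corollary \ref{C:coaisle}): shift the coaisle to get $\Sigma\cY$, intersect arc-sets with $\cX$, use Remark \ref{R:Kreweras} to force the two endpoints of a heart arc into a single interval $(a_i,a_{i+1})$, and then identify the resulting semisimple abelian category. The bookkeeping near the degenerate decorations $x_i=a_i$ and $x_i=a_{i+1}$ and the $2$-Calabi--Yau Hom computation are all handled correctly.
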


\subsection{Approximation triangles}
In this section we will describe approximation triangles for any t-structure on $\cC(\cZ)$. For that we will use a more detailed version of Lemma \ref{ConeInd} in a special situation.

\begin{lemma}\label{ParallelCone}
    Let $X=\{x,x'\}\in \cC(\cZ)$ and $Y_j=\{y_j,y'_j\}\in \cC(\cZ)$, $j=1,\dots,m$ be arcs such that the arcs $Y_j$ are mutually non-crossing, but they all cross the arc $X$. We can assume that \[x<y'_1<y'_2<\dots y'_m<x'<y_m\dots<y_2<y_1<x.\]
    Let $f: \bigoplus\limits_{j=1}^m Y_j   \rightarrow \Sigma X$ be a map such that all its components $f_j:  Y_j \rightarrow \Sigma X$ are not equal to zero. There exists a triangle
    \[
        X \rightarrow C \rightarrow \bigoplus_{j=1}^m Y_j \xrightarrow{f} \Sigma X,  
    \]
    such that $C$ is isomorphic to a direct sum of arcs $\{ x,y_1\}, \{ y_1',y_{2}\}, \{ y_2',y_{3}\},\dots, \{ y'_{m-1},y_{m}\}, \{ y_m',x'\}.$
\end{lemma}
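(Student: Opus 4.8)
The plan is to induct on $m$, using the octahedral axiom to peel off one arc at a time, exactly in the spirit of the proof of Lemma \ref{ConeInd}, but now keeping careful track of \emph{which} arc appears in the cone. For the base case $m=1$: since $f_1\colon Y_1\to\Sigma X$ is nonzero and the arcs $Y_1=\{y_1,y_1'\}$ and $X=\{x,x'\}$ cross with $x<y_1'<x'<y_1<x$, the triangle \eqref{ExtOfArcs} applied to the nonzero map $f_1$ (reading it as a morphism out of $Y_1$ into $\Sigma X$) gives precisely $X\to \{x,y_1\}\oplus\{y_1',x'\}\to Y_1\xrightarrow{f_1}\Sigma X$, which matches the claimed form. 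For the inductive step, I would build the octahedron on the composite $Y_m\hookrightarrow\bigoplus_{j=1}^m Y_j\xrightarrow{f}\Sigma X$ (inclusion of the summand $Y_m$ that is "closest to $x'$", i.e.\ the innermost one), getting a triangle $X\to C''\to \bigoplus_{j=1}^{m-1}Y_j\to\Sigma X$ whose connecting map has components $f_1,\dots,f_{m-1}$ — still all nonzero — and a second triangle relating $C$, $C''$ and $Y_m$. By the induction hypothesis applied to the first triangle, $C''\cong\{x,y_1\}\oplus\{y_1',y_2\}\oplus\dots\oplus\{y'_{m-2},y_{m-1}\}\oplus\{y'_{m-1},x'\}$.

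The heart of the argument is then to analyze the triangle $Y_m\to C\to C''\to\Sigma Y_m$ (equivalently $\Sigma^{-1}C''\to Y_m\to C\to C''$) and show it splits off all summands of $C''$ except the last one $\{y'_{m-1},x'\}$, and that the cone of the remaining map $\Sigma^{-1}\{y'_{m-1},x'\}\to Y_m$ decomposes as $\{y'_{m-1},y_m\}\oplus\{y_m',x'\}$. The key positional observation is that $Y_m=\{y_m,y_m'\}$ with $y_m'<x'<y_m$ lies "between" consecutive endpoints in such a way that $Y_m$ crosses only the arc $\{y'_{m-1},x'\}$ among the summands of $C''$ — indeed $y'_{m-1}<y_m'<x'<y_m$, while all other summands $\{x,y_1\},\{y_1',y_2\},\dots,\{y'_{m-2},y_{m-1}\}$ have both endpoints outside the interval $(y_m',y_m)$ or inside it, hence do not cross $Y_m$. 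Using Lemma \ref{ConeOfZero}(2) repeatedly, the zero components of $C''\to\Sigma Y_m$ split off the corresponding summands of $C''$ directly onto $C$, leaving the triangle $Y_m\to C_0\to\{y'_{m-1},x'\}\to\Sigma Y_m$ with the single nonzero connecting map. Since $\{y'_{m-1},x'\}$ and $Y_m$ cross with $y'_{m-1}<y_m'<x'<y_m$, the triangle \eqref{ExtOfArcs} identifies $C_0\cong\{y'_{m-1},y_m\}\oplus\{y_m',x'\}$. Assembling, $C\cong\{x,y_1\}\oplus\{y_1',y_2\}\oplus\dots\oplus\{y'_{m-2},y_{m-1}\}\oplus\{y'_{m-1},y_m\}\oplus\{y_m',x'\}$, as claimed.

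The main obstacle I anticipate is making the "$Y_m$ crosses only $\{y'_{m-1},x'\}$" step fully rigorous, including the degenerate possibilities where some of the relevant arcs are zero objects (e.g.\ when $y_m'$ is the immediate successor of $y'_{m-1}$, so $\{y'_{m-1},y_m'\}$ is not an arc, or when $y_m=x^-$ etc.), and checking that the connecting maps really do remain all-nonzero after passing to the octahedron — for the latter one uses the last sentence of the paragraph containing \eqref{ExtOfArcs}, that the structure maps in \eqref{ExtOfArcs} are nonzero whenever the objects are nonzero, together with the fact that a composite of nonzero maps between indecomposables in $\cC(\cZ)$ through the allowed "interval" factorizations (cf.\ \cite[Lemma 2.4.2]{IT}) is nonzero. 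A clean way to sidestep part of the bookkeeping is to choose the order of peeling so that at each stage the arc removed is the innermost remaining one, guaranteeing it crosses exactly one summand of the current cone; I would state this as the induction invariant and verify it is preserved.
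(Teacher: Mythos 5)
Your overall strategy is sound and differs from the paper in a meaningful way: the paper's proof peels off the \emph{outermost} arc $Y_1$ via an octahedron, observes that $\{x,y_1\}$ splits off, and then re-applies the induction hypothesis with the \emph{new} ``$X$'' being $\{y'_1,x'\}$ and the arcs $Y_2,\dots,Y_m$; you instead keep $X$ fixed, apply the induction hypothesis to $X$ with $Y_1,\dots,Y_{m-1}$, and then try to glue the innermost arc $Y_m$ onto $C''$. Your crossing analysis (that $Y_m$ interacts with exactly one summand $\{y'_{m-1},x'\}$ of $C''$) is correct, and the splitting-off step via Lemma \ref{ConeOfZero}(2) is the right move.

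However, there is a concrete gap in the direction of the octahedral triangle, and it changes the answer. The octahedron you actually need is on the composite $\bigoplus_{j\leq m-1}Y_j\hookrightarrow\bigoplus_{j=1}^m Y_j\xrightarrow{f}\Sigma X$ (not on $Y_m\hookrightarrow\bigoplus Y_j\to\Sigma X$, whose octahedral triangle relates $C$, $\bigoplus_{j\leq m-1}Y_j$ and the cocone $Z_m$ of $f_m$, not $C''$). That octahedron produces the triangle
\[
C''\longrightarrow C\longrightarrow Y_m\xrightarrow{\;h\;}\Sigma C'',
\]
with connecting morphism $h\colon Y_m\to\Sigma C''$, i.e.\ the arrow goes \emph{from} $Y_m$ \emph{to} $\Sigma C''$. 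You have written it as $Y_m\to C\to C''\to\Sigma Y_m$, which is the opposite orientation. This matters: applying \eqref{ExtOfArcs} to the surviving nonzero component $Y_m\to\Sigma\{y'_{m-1},x'\}$ does give the claimed $\{y'_{m-1},y_m\}\oplus\{y'_m,x'\}$, but applying it to the map in your stated direction, $\Sigma^{-1}\{y'_{m-1},x'\}\to Y_m$, yields instead $\{y'_{m-1},y'_m\}\oplus\{x',y_m\}$ --- a different pair of arcs, not the one in the lemma. So as written the computation contradicts the conclusion you assert. Once the direction is fixed, you still owe a verification that the component $h_{\text{last}}\colon Y_m\to\Sigma\{y'_{m-1},x'\}$ is actually nonzero (if it were zero, $C_0$ would be $\{y'_{m-1},x'\}\oplus Y_m$); you flag this, and the octahedral commutativity identifies $h$ with the composite $Y_m\xrightarrow{f_m}\Sigma X\to\Sigma C''$, so the argument the paper uses for its map $w$ (nonzero structure map of \eqref{ExtOfArcs} composed with a nonzero $f_j$, nonzero by the factorization description of morphisms) adapts, but here the map $X\to\{y'_{m-1},x'\}$ comes out of the induction hypothesis rather than directly out of \eqref{ExtOfArcs}, so you would need to strengthen the inductive statement to record that this component is nonzero, or argue it separately.
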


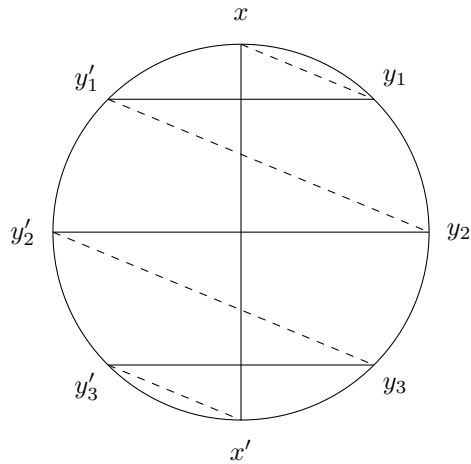
\begin{figure}[ht]
\begin{center}
\begin{tikzpicture}[scale=5]
        \draw (0,0) circle(0.5cm);
        
 	\node at (90:0.58){$x$};
 	\node at (135:0.58){$y'_1$};
 	\node at (180:0.58){$y'_2$};
 	\node at (225:0.58){$y'_3$};
 	\node at (270:0.58){$x'$};
 	\node at (315:0.58){$y_3$};
 	\node at (360:0.58){$y_2$};
 	\node at (45:0.58){$y_1$};
 	
 	    \draw (45:0.5) -- (135:0.5);
 		\draw (225:0.5) -- (315:0.5);
 		\draw (0:0.5) -- (180:0.5);
 		\draw (90:0.5) -- (270:0.5);
 		
 		\draw[dashed] (90:0.5) -- (45:0.5);
 		\draw[dashed] (135:0.5) -- (360:0.5);
 		\draw[dashed] (180:0.5) -- (315:0.5);
 		\draw[dashed] (225:0.5) -- (270:0.5);

  \end{tikzpicture}
  \end{center}
  \caption{An illustration for Lemma \ref{ParallelCone} in case $m=3$. The dashed arcs correspond to the object $C$.
\label{fig:zig-zag}}
\end{figure}

\begin{proof}
    We will prove the lemma by induction on $m$ again. The case $m=1$ follows from triangle \eqref{ExtOfArcs}. 
Now let us consider the following octahedral diagram in the general case.     
$$ 
 \xymatrix{
&\Sigma^{-1}\bigoplus\limits_{j=2}^m   Y_j \ar@{=}[r] \ar[d]^{\Sigma^{-1} g} &\Sigma^{-1}\bigoplus\limits_{j=2}^m  Y_j \ar[d] &\\
X\ar[r]^{w} \ar@{=}[d] & X_1 \ar[r] \ar[d]&Y_1 \ar[r]^{f_1} \ar[d]^{\iota_1}& \Sigma X  \ar@{=}[d]\\
X \ar[r]&C \ar[r] \ar[d]&\bigoplus\limits_{j=1}^mY_j \ar[r]^f \ar[d]^{\pi}& \Sigma X, \\
& \bigoplus\limits_{j=2}^m Y_j \ar@{=}[r]&\bigoplus\limits_{j=2}^mY_j&
}
$$    
where $\iota_1$ is the canonical inclusion of $Y_1$ into $\bigoplus\limits_{j=1}^mY_j$. The object $X_1$ decomposes into a direct sum of $\{x,y_1\}$ and $\{y_1',x'\}$ (note that the arc $\{x,y_1\}$ might be trivial). The arc $\{x,y_1\}$ does not cross any of the arcs $Y_2,\dots,Y_m$. By the octahedral axiom $w\Sigma^{-1}f=\Sigma^{-1}(g \pi) $. If some component $g_j: Y_j\rightarrow \Sigma \{y_1',x'\}$ of $g$ is zero, then the corresponding component $w\Sigma^{-1}f\mid_{Y_j}$ is zero. The component $w_1: X\rightarrow \{y_1',x'\}$ of $w$ is non-zero, thus $w_1$ and $\Sigma^{-1}f_j$, $j=2,\dots, m$ compose in a non-zero way by the description of the composition of morphisms in $\cC(\cZ)$. Hence all components $g_j$, $j=2,\dots, m$ are non-zero and we can apply the induction hypothesis to the triangle $$\{y_1',x'\} \rightarrow C' \rightarrow \bigoplus_{j=2}^mY_j \xrightarrow{g'} \Sigma \{y_1',x'\},$$ where $g'$ is the composition of $g$ and the projection from $\Sigma X_1$ to $\Sigma \{y_1',x'\}$. We get that $$C'\simeq \{ y_1',y_{2}\}\oplus \{ y_2',y_{3}\}\oplus \dots\oplus \{ y_{m-1}',y_{m}\}\oplus \{ y_m',x'\},$$ and hence $C\simeq \{x,y_1\} \oplus \{ y_1',y_{2}\}\oplus \{ y_2',y_{3}\}\oplus \dots\oplus \{ y_{m-1}',y_{m}\}\oplus \{ y_m',x'\}, \text{ as desired.}$  
\end{proof}

Let $(\cP,{\bf x})$ be a $\overline{\cZ}$-decorated non-crossing partition with $\cP(\cX) = \{B_m \mid m \in I\}$ and ${\bf x} (\cX)= (x_1, \ldots, x_n)$. For a marked point $u \in \cZ$ and a block $B \in \cP$ we set $u_B = \max_{[u^+,u]}\{z \in \bigcup_{i \in B}(a_i,x_i]\}$. This is the first point lying in one of the intervals $(a_i,x_i]$ indexed by an element in $B$ when walking from $u$ in a {\em clockwise} direction. Note that we have $u_B = u$ if and only if $u \in \bigcup_{i \in B}(a_i,x_i]$, and otherwise $u_B = \max_{[u^+,u]}\{x_i \mid i \in B\}$. Let now $T=\{t,t'\}$ be an arc in $\cC(\cZ)$. We say that $T$ and $B \in \cP$ cross, if there exists an arc $\{v_0,v_1\}$ with $v_0,v_1 \in \bigcup_{i \in B}(a_i,x_i]$ crossing $T$. Let $B_1, \ldots, B_l \in \cP$ be the list of mutually distinct blocks that cross $T$, and for $i = 1, \ldots, l$ set $z_i = t_{B_i}$ and $z'_i = t'_{B_i}$. Without loss of generality we can assume that $t \geq z_1 > z_2 > \ldots > z_l > t'$. Since any two arcs $\{z_i,z'_i\}$ and $\{z_j,z'_j\}$ do not cross by construction, this is equivalent to $t' \geq z'_l > \ldots > z'_2 > z'_1 > t$. We set 
\[
T|_{\cX} = \{\text{non-trivial arcs } \{z'_i,z_i\} \text{ for } 1 \leq i \leq l\}
\]
and 
\[
    T|_{\cY} = \{\text{non-trivial arcs }  \{t,z_1^-\}, \{{z'_l}^{-},t'\}, \{{z'}^{-}_i,z^{-}_{i+1}\} \text{ for } 1 \leq i \leq l-1 \}.
\]

The following corollary describes the approximation triangle with respect to $(\cX(\cP,{\bf x}),\cY(\cP,{\bf x}))$ for an indecomposable object of $\cC(\cZ)$. For an arbitrary object one can obtain the approximation triangle as a direct sum of approximation triangles for its summands.

\begin{corollary}\label{C:approx}
    Let $(\cX,\cY)$ be a t-structure on $\cC(\cZ)$ with the associated $\overline{\cZ}$-decorated non-crossing partition $(\cP,{\bf x})$. Let $T=\{t,t'\}$ be an arc in $\cC(\cZ)$, then the approximation triangle of $T$ with respect to $(\cX,\cY)$ is of the form
    \[
    Z\rightarrow T\rightarrow W \rightarrow \Sigma Z,
    \]
    where $Z$ is a direct sum of all arcs from $T|_{\cX}$ 
 and $W$ is a direct sum of all arcs from $T|_{\cY}$ constructed above. 
\end{corollary}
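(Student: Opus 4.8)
The plan is to verify that the triangle constructed from the combinatorial data $T|_\cX$ and $T|_\cY$ genuinely has its left term in $\cX$, its right term in $\cY$, and is a distinguished triangle; by uniqueness of approximation triangles this will identify it with the one we want. First I would observe that $Z = \bigoplus_{1\le i\le l}\{z'_i,z_i\}$ lies in $\cX = \cX(\cP,{\bf x})$ directly from the definition: each arc $\{z'_i,z_i\}$ has both endpoints in $\bigcup_{j\in B_i}(a_j,x_j]$ by construction of $z_i = t_{B_i}$ and $z'_i = t'_{B_i}$, so it is one of the generating arcs of the aisle. Dually, I would check that $W = \bigoplus T|_\cY$ lies in $\cY = \cY(\cP,{\bf x})$: using the description of the coaisle from Corollary \ref{C:coaisle} via the Kreweras complement, each arc $\{t,z_1^-\}$, $\{{z'_l}^-,t'\}$ and $\{{z'_i}^-,z_{i+1}^-\}$ must have both endpoints in $\bigcup_{i\in B'_m}[y_i,a_{i+1})$ for a single block $B'_m$ of $\cP^c$. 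This is the place where the positioning $t\ge z_1>\ldots>z_l>t'$ together with Remark \ref{R:Kreweras} should be used — the endpoints of each $\cY$-arc lie in "gaps" between consecutive blocks that cross $T$, and the non-crossing condition on $\cP$ forces these gaps to be contained in a single Kreweras block.

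Next I would build the distinguished triangle. The idea is to take a map $W \to \Sigma Z$, or rather its rotation, and realize $T$ as the appropriate cone/cocone using Lemma \ref{ParallelCone}. Concretely, consider the map $f\colon \bigoplus_{i=1}^l \{z'_i,z_i\} \to \Sigma T$ whose components are the (necessarily unique up to scalar) non-zero morphisms $\{z'_i,z_i\}\to \Sigma T$ — these exist and are non-zero precisely because each $\{z'_i,z_i\}$ crosses $T$, which holds by the very definition of the blocks $B_i$ crossing $T$. The arcs $\{z'_i,z_i\}$ are mutually non-crossing (again by construction) and all cross $T$, so Lemma \ref{ParallelCone} applies with $X = T$ and $Y_i = \{z'_i,z_i\}$, yielding a triangle
\[
T \rightarrow C \rightarrow \bigoplus_{i=1}^l \{z'_i,z_i\} \xrightarrow{f} \Sigma T,
\]
with $C$ a direct sum of arcs $\{t,z_1\},\{z'_1,z_2\},\ldots,\{z'_{l-1},z_l\},\{z'_l,t'\}$ (discarding trivial ones). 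Rotating this triangle one step and applying $\Sigma^{-1}$ gives $\Sigma^{-1}C \to T \to Z \to C$, i.e. a triangle $Z' \to T \to W' \to \Sigma Z'$ with $Z' = Z = \bigoplus\{z'_i,z_i\}$ after re-rotating; I must be careful with the direction of the rotation so that $Z \in \cX$ sits on the left. Matching the shift: $\Sigma^{-1}\{t,z_1\} = \{t^-,z_1^-\}$, and one rotation/shift turns Lemma \ref{ParallelCone}'s $C$ into exactly the list $\{t,z_1^-\},\{z'_l{}^-,t'\},\{z'_i{}^-,z_{i+1}^-\}$ defining $T|_\cY$ — this bookkeeping of which shift lands on $C$ versus on $T$ is the step most likely to hide a sign/index error, so I would do it explicitly in the two-block case first.

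Finally, to conclude I would invoke uniqueness: since $(\cX,\cY)$ is a t-structure, any triangle $A\to T\to B\to\Sigma A$ with $A\in\cX$, $B\in\cY$ is isomorphic to the approximation triangle of $T$. Having exhibited a distinguished triangle $Z\to T\to W\to\Sigma Z$ with $Z$ a direct sum of the arcs of $T|_\cX$ (hence in $\cX$) and $W$ a direct sum of the arcs of $T|_\cY$ (hence in $\cY$ by the coaisle description), the statement follows. The main obstacle I anticipate is not any single deep step but the verification that $W \in \cY$: one has to be sure that the endpoints $z_i^-$, ${z'_i}^-$, $t$, $t'$ fall into the half-open intervals $[y_j,a_{j+1})$ indexed by a common Kreweras block, including the boundary behaviour when some $x_j$ equals a limit point (so $y_j = x_j$) or when $z_i = t$ (so the arc $\{t,z_1^-\}$ may degenerate); handling these degenerate cases cleanly, using the convention from Remark \ref{RemNg} that $n\ge 2$, is where the bulk of the careful work lies.
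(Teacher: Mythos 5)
Your overall strategy matches the paper's: produce a distinguished triangle via Lemma \ref{ParallelCone}, observe $Z \in \cX$ by construction, check $W \in \cY$, and conclude by uniqueness of approximation triangles. However, there are two problems. First, a concrete shift error in the application of Lemma \ref{ParallelCone}: you take $Y_i = \{z'_i, z_i\}$, obtaining a triangle $T \to C \to Z \to \Sigma T$, which rotates to $\Sigma^{-1}Z \to T \to C \to Z$. But $\Sigma^{-1}Z = \bigoplus\{z_i'^{+}, z_i^{+}\}$ is generally \emph{not} in $\cX$, since $z_i$ may equal the right endpoint $x_j$ of some interval $(a_j, x_j]$, and then $z_i^{+} \notin (a_j, x_j]$. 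The paper instead feeds $Y_i = \Sigma\{z'_i, z_i\} = \{z_i'^{-}, z_i^{-}\}$ into Lemma \ref{ParallelCone}; this produces $T \to C \to \Sigma Z \to \Sigma T$ with $C \cong \{t, z_1^-\} \oplus \{z_1'^{-}, z_2^{-}\} \oplus \cdots \oplus \{z_l'^{-}, t'\} = W$ directly, and the rotation $Z \to T \to W \to \Sigma Z$ then has the right terms in the right places with no further shift. (Also note $\Sigma^{-1}\{t,z_1\} = \{t^{+},z_1^{+}\}$, not $\{t^{-},z_1^{-}\}$, so your stated bookkeeping of the shift does not close up.)

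Second, and more substantially: the verification that $W \in \cY$ is entirely unresolved in your proposal. You gesture at an argument via Corollary \ref{C:coaisle} and the Kreweras complement, but do not carry it out, and this is where the bulk of the paper's proof lives. The paper's route is actually different from what you sketch: it does not invoke the Kreweras description at all, but instead uses $\cY = \cX^{\perp}$ directly, reducing $W \in \cY$ to the claim that $\Sigma^{-1}w$ crosses no arc in $\cX$ for each summand $w \in T|_{\cY}$, and then runs a careful case analysis on where the endpoints of a hypothetical crossing arc $\{u_0,u_1\} \in \cX$ could lie, deriving a contradiction from the maximality of $z_m'$ (resp.\ $z_{m+1}$) in the relevant interval. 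Whether your Kreweras route would be cleaner is not clear, but as written it is a missing idea rather than a completed argument; without it, the proof does not go through.
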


\begin{proof}
We have $t \geq z_1 > z_2 > \ldots > z_l > t' \geq z'_l > \ldots > z'_2 > z'_1 > t$. Hence every arc in $\Sigma (T|_{\cX})$ crosses $T$. Applying Lemma \ref{ParallelCone} to the map with non-zero components $\Sigma Z \rightarrow \Sigma T$ induced by the intersections of $T$ and the arcs $\{{z'}_i^-,{z}_i^-\}$ in $\Sigma (T|_{\cX})$
we immediately see that
\[
    Z\rightarrow T\rightarrow W \rightarrow \Sigma Z
    \]
is a triangle in $\cC(\cZ)$, and by construction $Z \in \cX$. Let us check that $W\in \cY$. This is equivalent to showing that for every arc $w \in T|_{\cY}$, the arc $\Sigma^{-1} w$ does not cross any arc in $\cX$. We first consider the case $w = \{t,z_1^-\}$.
If $t$ is in $\bigcup_{i \in B_m} [x_i,a_i)$ for some $B_m$, then $m =1$ and the arc $\{t,z_1^{-}\}$ is a trivial arc, and hence so is the arc $\Sigma^{-1}w$. Otherwise, the arc $\Sigma^{-1}w$, which is given by $\{t^+,z_1\}$ does not cross any of the arcs in $\cX$ by construction. The fact that $w = \{{z'}_l^-,t'\} \in \cY$ follows analogously. 

Let now $m \in \{1, \ldots, l-1\}$ and consider the arc $w = \{{z'}^-_m,z^-_{m+1}\}$. Assume for a contradiction that $\Sigma^{-1}w = \{z'_m,z_{m+1}\}$ crosses an arc $\{u_0,u_1\} \in \cX$. By construction, we have $z'_m = x_p$ and $z_{m+1} = x_q$ for some $p \in B_m$ and $q \in B_{m+1}$. Furthermore, we have $u_0 \in (a_k,x_k]$ and $u_1 \in (a_l,x_l]$ for some $k,l \in B \in \cP$. Exchanging the roles of $u_0$ and $u_1$ if necessary we have $u_0 \in (z_{m+1}, z'_m) = (z_{m+1},z_m] \cup (z_m,z'_m)$ and $u_1 \in (z'_m,z_{m+1})$ and we obtain the inequality
\[
    a_k < u_0 \leq x_k \leq x_p = z'_m < a_l < u_1 \leq x_l \leq x_q = z_{m+1}.
\] 
If we had $u_0 \in (z_m,z'_m)$, then $\{u_0,u_1\} \in \cX$ would cross $\{z_m,z'_m\} \in \cX$, forcing $B = B_m$. But then we must have $u_1 \in (z'_m,z'_{m+1})$, since otherwise $\{u_0,u_1\}$ and $\{z'_{m+1},z_{m+1}\}$ cross, contradicting the non-crossing of the blocks $B_m \neq B_{m+1}$. However, this in turn contradicts the maximality of $z'_m$ in the interval $[t,t']$. Therefore, we must have $u_0 \in (z_{m+1},z_m]$. Symmetrically, we show that we must have $u_1 \in (z'_m,z'_{m+1}]$.

In particular, this implies that the block $B$ crosses the arc $T = \{t,t'\}$, and so $B = B_i$ for some $1 \leq i \leq l$. However, if $1 \leq i \leq m$ then we would have $t \leq z'_1 < \ldots < z'_m < u_1 \leq t'$, contradicting the maximality of $z'_i$, and if $m+1 \leq i \leq l$, then we would have $t' \leq z_l < \ldots < z_{m+1} < u_0 \leq t$, contradicting the maximality of $z_i$.
\end{proof}

Figure \ref{fig:approximations} illustrates the construction from Corollary \ref{C:approx}.

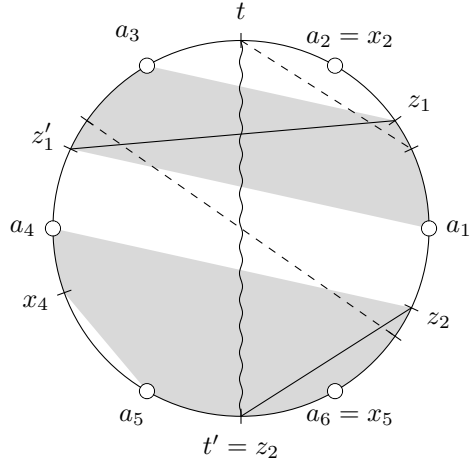
\begin{figure}[ht]
\begin{center}
\begin{tikzpicture}[scale=5]

	
	\fill[fill=gray!30]    (0:0.5) arc (0:35:0.5) -- (120:0.5) arc(120:155:0.5) -- (0:0.5);
	
	\fill[fill=gray!30]    (180:0.5) arc (180:200:0.5) to (240:0.5) arc(240:335:0.5) -- (180:0.5);

        \draw (0,0) circle(0.5cm);
        
	\draw (0:0.5) node[fill=white,circle,inner sep=0.065cm] {} circle (0.02cm);
	\node at (0:0.58){$a_1$};
	\draw (60:0.5) node[fill=white,circle,inner sep=0.065cm] {} circle (0.02cm);
	\node at (60:0.58){$a_2 = x_2$};
	\draw (120:0.5) node[fill=white,circle,inner sep=0.065cm] {} circle (0.02cm);
	\node at (120:0.6){$a_3$};
	\draw (180:0.5) node[fill=white,circle,inner sep=0.065cm] {} circle (0.02cm);
	\node[] (a_4) at (180:0.58){$a_4$};	
	\draw (240:0.5) node[fill=white,circle,inner sep=0.065cm] {} circle (0.02cm);
	\node[] (a_5) at (240:0.58){$a_5$};	
	\draw (300:0.5) node[fill=white,circle,inner sep=0.065cm] {} circle (0.02cm);
	\node[] (a_6) at (300:0.58){$a_6 = x_5$};

	
	\draw (35:0.52) -- (35:0.48);
	\node (x_1) at (35:0.58){$z_1$};
	
	\draw (200:0.52)  -- (200:0.48);
	\node (x_4) at (200:0.58){$x_4$};
	
	\draw (155:0.52)  -- (155:0.48);
	\node (x_{3}) at (155:0.58){$z'_1$};
	
	\draw (335:0.52)  -- (335:0.48);
	\node (x_{6}) at (335:0.58){$z_2$};
	
	
	\draw (35:0.5) to (155:0.5);
	\draw (335:0.5) to (270:0.5);
	\draw[dashed] (145:0.5) -- (325:0.5);
	\draw[dashed] (90:0.5) -- (25:0.5);

	
	\draw (25:0.52)  -- (25:0.48);
	\node (x_1-) at (25:0.58){};
	
	
	\draw (145:0.52)  -- (145:0.48);
	\node (x_{3}-) at (145:0.58){};
	
	\draw (325:0.52)  -- (325:0.48);
	\node (x_{6}-) at (325:0.58){};
	
	
	
	
	

    \draw (90:0.52)  -- (90:0.48);
	\node (t) at (90:0.58){$t$};
	
	\draw (270:0.52)  -- (270:0.48);
	\node (t') at (270:0.58){$t' = z_2$};

	\draw[decorate,decoration={snake,amplitude=.2mm}] (90:0.5) -- (270:0.5);

  \end{tikzpicture}
  \end{center}
  \caption{An illustration of Corollary \ref{C:approx} for the arc $T = \{t,t'\}$ and the aisle from Figure \ref{fig:aisle}. The straight arcs represent the indecomposable summands of $Z$ and the dashed arcs represent the indecomposable summands of $W$ in the approximation triangle $Z \to T \to W \to \Sigma Z$.
}
\label{fig:approximations}
\end{figure}

\begin{remark}
    We could have computed the approximation triangles for t-structure associated to $(\cP,{\bf x})$, using the factorization properties of morphisms in $\cC(\cZ)$, but we find Lemma \ref{ParallelCone} aesthetically pleasing. 
\end{remark}

\subsection{Non-degenerate and bounded t-structures}

Let $(\cX,\cY)$ be a t-structure in $\cC(\cZ)$. It is called {\bf left non-degenerate} if
$\bigcap_{n \in \bZ} \Sigma^n \cX = 0$,
and it is called {\bf right non-degenerate} if
$ \bigcap_{n \in \bZ} \Sigma^n \cY = 0$.
A t-structure that is both left and right non-degenerate is called {\bf non-degenerate}.

The description of non-degenerate t-structures in $\cC(\cZ)$ is an immediate consequence of Theorem \ref{T:one-to-one}, Corollary \ref{C:coaisle} and Corollary \ref{C:heart}.

\begin{corollary}\label{C:non-degenerate}
    Let $(\cX,\cY)$ be a t-structure  in $\cC(\cZ)$ associated to a $\overline{\cZ}$-decorated non-crossing partition $(\cP,{\bf x})$ with ${\bf x} = (x_1, \ldots, x_n)$. The t-structure $(\cX,\cY)$ is left non-degenerate if and only if either $x_i\in \cZ$ or $x_i=a_i$ for any $x_i\in{\bf x}$. It is right non-degenerate if and only if either $x_i\in \cZ$ or $x_i=a_{i+1}$ for any $x_i\in{\bf x}$.
    Finally, it is non-degenerate if and only if $x_i\in \cZ$ for any $x_i\in{\bf x}$, i.e.\
    if and only if the heart of $(\cX,\cY)$ is isomorphic to the module category of the product of $n$ copies of $\bK$.
\end{corollary}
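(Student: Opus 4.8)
The plan is to compute the two intersections $\bigcap_{k\in\bZ}\Sigma^k\cX$ and $\bigcap_{k\in\bZ}\Sigma^k\cY$ explicitly from the combinatorial descriptions of the aisle and coaisle supplied by Theorem \ref{T:one-to-one} and Corollary \ref{C:coaisle}, and then to read off the heart via Corollary \ref{C:heart}.

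First I would reduce to indecomposables: each $\Sigma^k\cX$ is again an aisle, hence closed under finite direct sums and direct summands, so an object lies in $\bigcap_k\Sigma^k\cX$ if and only if each of its arc summands does. Thus left non-degeneracy is equivalent to the statement that no arc lies in $\Sigma^k\cX$ for every $k\in\bZ$. Fix an arc $\{z_0,z_1\}$ with $z_0\in(a_p,a_{p+1})$ and $z_1\in(a_q,a_{q+1})$. The key elementary observation, and the place where the hypothesis that every limit point is two-sided enters, is that the sector $(a_p,a_{p+1})\cap\cZ$ contains no limit point and is closed under $z\mapsto z^\pm$, hence is order-isomorphic to $\bZ$; consequently the $\Sigma$-orbit $\{z_0^{(k)}\mid k\in\bZ\}$ is exactly $(a_p,a_{p+1})\cap\cZ$, and in particular no power of $\Sigma^{\pm1}$ ever moves an endpoint past a limit point. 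Now $\{z_0,z_1\}\in\Sigma^k\cX$ means $\{z_0^{(k)},z_1^{(k)}\}\in\cX$, i.e.\ $z_0^{(k)},z_1^{(k)}\in\bigcup_{i\in B_m}(a_i,x_i]$ for some block $B_m\in\cP$; since the intervals $(a_i,a_{i+1}]$ are pairwise disjoint, $(a_p,a_{p+1})$ meets $\bigcup_i(a_i,x_i]$ only inside $(a_p,x_p]$, so imposing this for all $k$ forces $(a_p,a_{p+1})\cap\cZ\subseteq(a_p,x_p]$, i.e.\ $x_p=a_{p+1}$. Conversely, if some $x_p=a_{p+1}$, then every arc with both endpoints in $(a_p,a_{p+1})$ lies in $\cX$ and has all of its $\Sigma$-shifts inside $(a_p,a_{p+1})\subseteq(a_p,x_p]$, hence lies in $\bigcap_k\Sigma^k\cX$. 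Therefore $\bigcap_k\Sigma^k\cX=0$ if and only if no $x_i$ equals $a_{i+1}$; as the only values $x_i$ can take outside $\cZ$ are $a_i$ and $a_{i+1}$, this is precisely the condition that $x_i\in\cZ$ or $x_i=a_i$ for all $i$, proving the left non-degeneracy criterion.

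For right non-degeneracy I would run the identical argument on the coaisle, using the description $\cY=\add\{\{z_0,z_1\}\mid z_0,z_1\in\bigcup_{i\in B'_m}[y_i,a_{i+1})\text{ for some }m\in I'\}$ from Corollary \ref{C:coaisle}, where $\cP^c=\{B'_m\mid m\in I'\}$ and $y_i=x_i^-$ when $x_i\in\cZ$ and $y_i=x_i$ otherwise. An arc with an endpoint $z_0\in(a_p,a_{p+1})$ lies in every $\Sigma^k\cY$ only if $(a_p,a_{p+1})\cap\cZ\subseteq[y_p,a_{p+1})$, which forces $y_p=a_p$; but $y_p=x_p^-\in(a_p,a_{p+1})$ when $x_p\in\cZ$, and $y_p=a_{p+1}$ when $x_p=a_{p+1}$, so $y_p=a_p$ holds exactly when $x_p=a_p$. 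Together with the converse (if $x_p=a_p$ then $y_p=a_p$ and any arc inside $(a_p,a_{p+1})$ survives all shifts in $\cY$), this gives $\bigcap_k\Sigma^k\cY=0$ if and only if no $x_i$ equals $a_i$, i.e.\ $x_i\in\cZ$ or $x_i=a_{i+1}$ for all $i$. Combining the two criteria, $(\cX,\cY)$ is non-degenerate exactly when every $x_i$ lies in $\cZ$; in that case $m=|\{i\mid x_i\in\cZ\}|=n$, so Corollary \ref{C:heart} identifies the heart with $\mathrm{mod}(\bK^{\times n})$, and conversely a heart of this form forces $m=n$ and hence $x_i\in\cZ$ for all $i$.

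The routine parts are the closure reduction and the statements about the heart; the one step requiring care is the interval bookkeeping in the middle paragraphs — checking that each open sector $(a_p,a_{p+1})$ meets the relevant union only in its own piece $(a_p,x_p]$ (respectively $[y_p,a_{p+1})$), and that the $\Sigma$-orbit of an endpoint fills out its whole sector without ever jumping past a limit point. I expect this, together with keeping straight the three possible types of value of each $x_i$, to be the main (though not severe) obstacle.
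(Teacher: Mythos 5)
Your proof is correct and takes exactly the route the paper has in mind: the paper simply declares the corollary an immediate consequence of Theorem \ref{T:one-to-one}, Corollary \ref{C:coaisle} and Corollary \ref{C:heart}, and your argument fills in that deduction by computing $\bigcap_{k}\Sigma^k\cX$ and $\bigcap_{k}\Sigma^k\cY$ arc-by-arc from the combinatorial descriptions of the aisle and coaisle. The interval bookkeeping, including the observation that each sector $(a_p,a_{p+1})\cap\cZ$ is order-isomorphic to $\bZ$ so that $\Sigma$-orbits exhaust it and approach the limit points, is exactly the content the paper treats as implicit, and your case analysis on the three possible types of $x_i$ and $y_i$ is sound.
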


Throughout the rest of this section, for completeness we explicitly include the case where $\cZ$ has exactly one limit point. As noted in Remark \ref{RemNg}, in this case the classification of t-structures only depends on the decoration ${\bf x}=(x_1)$ and the lattice of t-structures is isomorphic to $\mathbb{Z}\cup \{\pm\infty\}$ with linear order, which can be obtained by cutting $\overline{\cZ}$ at the limit point. 

We call a t-structure $(\cX,\cY)$ {\bf bounded above} if  $\bigcup_{n \in \bZ} \Sigma^n \cX = \cC(\cZ)$. We call it {\bf bounded below} if $\bigcup_{n \in \bZ} \Sigma^n \cY = \cC(\cZ)$. A t-structure is called {\bf bounded} if it is bounded above and bounded below.

\begin{remark}
    For $n=1$, every non-trivial t-structure $(\cX,\cY)$ (i.e.\ such that $\cX \neq 0$ and $\cY \neq 0$) in $\cC(\cZ)$ is bounded. 
\end{remark}

\begin{proposition}
    Let $n \geq 2$ and let $(\cX,\cY)$ be a t-structure in $\cC(\cZ)$ with associated $\overline{\cZ}$-decorated non-crossing partition $(\cP(\cX),\bf{x}(\cX)) = (\cP,\cX)$. Then it is 
    \begin{enumerate}
        \item bounded above if and only if $\cP$ is the coarsest partition of $[n]$: $\cP = \{[n]\}$;
        \item bounded below if and only if $\cP$ is the finest partition of $[n]$: $\cP = \{\{1\}, \{2\}, \ldots, \{n\}\}$.
    \end{enumerate}
\end{proposition}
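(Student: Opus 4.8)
\emph{Proof proposal.} The plan is to analyse the two increasing unions $\bigcup_{r \in \bZ}\Sigma^r\cX$ and $\bigcup_{r \in \bZ}\Sigma^r\cY$ directly. Since $\cX$ is an aisle we have $\Sigma\cX \subseteq \cX$, hence $\bigcup_{r}\Sigma^r\cX = \bigcup_{r \geq 0}\Sigma^{-r}\cX$ with the $\Sigma^{-r}\cX$ forming an increasing chain; dually, since $\cY$ is a coaisle, $\bigcup_{r}\Sigma^r\cY = \bigcup_{r \geq 0}\Sigma^{r}\cY$ with the $\Sigma^{r}\cY$ increasing. Each such union is again closed under direct sums and summands, so it equals $\cC(\cZ)$ if and only if it contains every arc. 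The geometric input I would record first is: if $z \in \cZ$ lies in the interval $(a_k,a_{k+1})$, then its iterated predecessors $z^{(-r)}$ stay in $(a_k,a_{k+1})$ and converge to $a_k$ from above, while its iterated successors $z^{(r)}$ stay in $(a_k,a_{k+1})$ and converge to $a_{k+1}$ from below. This holds because every limit point is two-sided, so $\cZ \cap (a_k,a_{k+1})$ is infinite and accumulates only at $a_k$ and at $a_{k+1}$.

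For part (1): using $\Sigma\{y_0,y_1\} = \{y_0^-,y_1^-\}$ we have $\{y_0,y_1\} \in \Sigma^{-r}\cX$ if and only if $\{y_0^{(-r)},y_1^{(-r)}\} \in \cX$. Translating the defining condition of $\cX(\cP,{\bf x})$ along iterated predecessors, an arc $\{y_0,y_1\}$ with $y_0 \in (a_k,a_{k+1})$ and $y_1 \in (a_l,a_{l+1})$ lies in $\bigcup_{r}\Sigma^r\cX$ if and only if $k$ and $l$ lie in a common block of $\cP$ and $x_k \neq a_k$ and $x_l \neq a_l$: indeed $\{y_0^{(-r)},y_1^{(-r)}\}$ can lie in $\cX(\cP,{\bf x})$ only if $k,l$ lie in a common block, and since $y_0^{(-r)} \to a_k$ from above one has $y_0^{(-r)} \in (a_k,x_k]$ for all large $r$ precisely when $x_k \neq a_k$ (and for no $r$ when $x_k=a_k$), and likewise for $y_1$. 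Since for any pair $k,l \in [n]$ there is an arc with one endpoint in $(a_k,a_{k+1})$ and the other in $(a_l,a_{l+1})$ (each interval meeting $\cZ$ in an infinite set), we conclude that $\bigcup_{r}\Sigma^r\cX = \cC(\cZ)$ if and only if every pair $k,l \in [n]$ lies in a common block of $\cP$ and $x_i \neq a_i$ for all $i$. The first condition says precisely $\cP = \{[n]\}$; and for $n \geq 2$ this forces every index to be an adjacency and no index to be a singleton, so $x_i \in (a_i,a_{i+1}]$ and in particular $x_i \neq a_i$ automatically. This proves (1).

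For part (2) I would run the dual argument on the coaisle, using Corollary \ref{C:coaisle}, which describes $\cY$ through the Kreweras complement $\cP^c$ and the points $y_i$ (with $y_i = x_i^-$ if $x_i \in \cZ$ and $y_i = x_i$ otherwise) as $\cY = \add\{\{z_0,z_1\} \mid z_0,z_1 \in \bigcup_{i \in B'}[y_i,a_{i+1})\ \text{for some block}\ B' \in \cP^c\}$. Here $\{z_0,z_1\} \in \Sigma^r\cY$ if and only if $\{z_0^{(r)},z_1^{(r)}\} \in \cY$, and since $z_0^{(r)} \to a_{k+1}$ from below one has $z_0^{(r)} \in [y_k,a_{k+1})$ for all large $r$ precisely when $y_k < a_{k+1}$, i.e.\ precisely when $x_k \neq a_{k+1}$. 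Arguing exactly as in part (1), $\bigcup_{r}\Sigma^r\cY = \cC(\cZ)$ if and only if every pair $k,l \in [n]$ lies in a common block of $\cP^c$ and $x_i \neq a_{i+1}$ for all $i$. The first condition says $\cP^c = \{[n]\}$, which, applying the Kreweras complement once more, is equivalent to $\cP = \{\{1\},\ldots,\{n\}\}$; and this partition forces every index to be a singleton, so $x_i \in [a_i,a_{i+1})$ and hence $x_i \neq a_{i+1}$ automatically. This proves (2).

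The expected main obstacle is bookkeeping rather than anything conceptual: one must keep straight that, because $\cX$ is closed under $\Sigma$, \emph{enlarging} $\cX$ corresponds to applying $\Sigma^{-1}$, which moves endpoints to their predecessors, so the relevant limiting behaviour is the convergence of iterated predecessors to the left limit point $a_k$ of the interval (and dually, for $\cY$, convergence of iterated successors to $a_{k+1}$); and the genuinely load-bearing observation is that for each of the two extreme partitions the side conditions on the decoration ($x_i \neq a_i$ in case (1), $x_i \neq a_{i+1}$ in case (2)) are automatically satisfied, so that boundedness above or below depends only on $\cP$ and not on ${\bf x}$. The case $n = 1$ is excluded by hypothesis.
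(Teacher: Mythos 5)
Your proposal is correct and follows essentially the same approach as the paper: translate the arc-level description of the aisle (resp. coaisle via the Kreweras complement) under iterated powers of $\Sigma$, and observe that for $\cP = \{[n]\}$ the absence of singletons makes the side condition $x_i \neq a_i$ automatic (dually, for the finest partition, $x_i \neq a_{i+1}$ is automatic). The only cosmetic difference is that you characterise membership in $\bigcup_r \Sigma^r\cX$ arc by arc in one step, while the paper separates the two directions (exhibiting a witness when $\cP \neq \{[n]\}$, and choosing $N$ large when $\cP = \{[n]\}$); the underlying computation is identical.
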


\begin{proof}
    We start by showing (1). Assume first that $\cP \neq [n]$. Then we can find $i \neq j$ in $[n]$ which are in different blocks of $\cP = \{B_m \mid m \in I\}$. Pick $y_0 \in (a_i,a_{i+1})$ and $y_1 \in (a_j,a_{j+1})$ and consider $Y = \{y_0,y_1\} \in \cC(\cZ)$. For all $N \in \bZ$ we have
    \[
       Y \notin \Sigma^N \cX = \{\{z_0,z_1\} \; \text{arc of}\; \cZ \mid z_0,z_1 \in \bigcup_{i \in B_m} (a_i,x_i^{-N}] \; \text{for some} \; m \in I\},
    \]
    where by abuse of notation we set $a_k^{-N} = a_k$ for all $k \in [n]$. Thus $(\cX,\cY)$ is not bounded above.
    
    Conversely, assume $\cP = \{[n]\}$, and let $Y \in \cC(\cZ)$. Without loss of generality, since $\cC(\cZ)$ is Krull-Schmidt and both $\cX$ and $\cY$ are closed under direct summands, we can assume $Y$ to be indecomposable, that is $Y = \{y_0,y_1\}$ for some $y_0,y_1 \in \cZ$. Since $\cP = [n]$, there are no singletons in this partition, so $x_i \neq a_i$ for all $i \in [n]$. It follows that for $N \in \bZ_{\geq 0}$ big enough, we have $y_0 \in (a_i,x_i^N)$ and $y_1 \in (a_j,x_j^N)$ for some $i,j \in [n]$ (where, by abuse of notation, we set $a_k^N = a_k$ for $k \in [n]$, to cover the case where $x_i = a_{i+1}$ or $x_j = a_{j+1}$). In particular, $Y \in \Sigma^{-N} \cX$, and so $(\cX,\cY)$ is bounded above.
    
    The proof of (2) follows with a dual argument, using Corollary \ref{C:coaisle} and the observation that the finest and the coarsest partitions of $[n]$ are each other's mutual Kreweras complement.
 \end{proof}
 
 \begin{corollary}
    For $n \geq 2$ there exist no bounded t-structures in $\cC(\cZ)$.
\end{corollary}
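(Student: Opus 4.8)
The plan is to deduce this immediately from the preceding proposition characterising bounded above and bounded below t-structures, together with the classification in Theorem \ref{T:one-to-one}. By definition a bounded t-structure is one that is simultaneously bounded above and bounded below, so the first step is simply to invoke both clauses of that proposition for a given t-structure $(\cX,\cY)$ with associated $\overline{\cZ}$-decorated non-crossing partition $(\cP,{\bf x})$: clause (1) says that $(\cX,\cY)$ being bounded above forces $\cP=\{[n]\}$, the coarsest partition of $[n]$, while clause (2) says that $(\cX,\cY)$ being bounded below forces $\cP=\{\{1\},\ldots,\{n\}\}$, the finest partition of $[n]$.

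The second and final step is to observe that for $n\geq 2$ these two partitions are genuinely different: the coarsest partition consists of a single block of size $n\geq 2$, whereas the finest consists of $n\geq 2$ blocks of size $1$. Since by Theorem \ref{T:one-to-one} every t-structure on $\cC(\cZ)$ has exactly one associated non-crossing partition $\cP$ of $[n]$, no t-structure can have $\cP$ equal to both the coarsest and the finest partition at once, and hence none can be both bounded above and bounded below. I do not anticipate any obstacle here; the statement is a direct corollary of the proposition, the only point worth spelling out being that the characterisations there apply to \emph{all} t-structures (including the trivial ones $(\cC(\cZ),0)$ and $(0,\cC(\cZ))$, whose partitions are the coarsest and finest respectively), so the two conditions are genuinely incompatible when $n\geq 2$.
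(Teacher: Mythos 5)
Your proof is correct and is precisely the intended argument: the corollary is stated in the paper without proof as an immediate consequence of the preceding proposition, and your deduction (bounded implies $\cP$ is simultaneously the coarsest and the finest partition, which is impossible for $n\geq 2$ by the uniqueness in Theorem \ref{T:one-to-one}) is exactly the reasoning being invoked.
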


\subsection{Equivalence classes of t-structures}

We use the notion of equivalence of t-structures introduced by Neeman. 

\begin{definition}[{\cite[Definition 0.10]{NeEqT}}]
	We say that two t-structures $(\cX,\cY)$ and $(\cX',\cY')$ are equivalent if there exists an $N \in \bZ_{>0}$ such that
	\[
		\Sigma^N \cX \subseteq \cX' \subseteq \Sigma^{-N} \cX.
	\]
\end{definition}

This motivates the following definitions.

\begin{definition}
    Let $(\cP,{\bf x})$ be a $\overline{\cZ}$-decorated non-crossing partition with ${\bf x} = (x_1, \ldots, x_l)$. We call the index set $Z({\bf x}) = \{i \in [n] \mid x_i \in \cZ\}$ the {\bf $\cZ$-indices of ${\bf x}$}. 
\end{definition}

\begin{definition}\label{D:equiv partitions}
	Let $(\cP, {\bf x})$ and $(\cP', {\bf x}')$ be $\overline{\cZ}$-decorated non-crossing partitions of $[n]$ with ${\bf x} = (x_1, \ldots, x_n)$ and ${\bf x}' = (x'_1, \ldots, x'_n)$. We say that $(\cP, {\bf x})$ and $(\cP', {\bf x}')$ are {\bf equivalent}, and write
	$(\cP, {\bf x}) \sim (\cP', {\bf x}')$
	if $\cP = \cP'$ and $Z({\bf x}) = Z({\bf x}')$.
\end{definition}

\begin{remark}\label{R:equal}
    Note that if $(\cP, {\bf x}) \sim (\cP', {\bf x}')$ and $i \notin Z({\bf x}) = Z({\bf x}')$, then automatically $x_i = x'_i$.
	Indeed, we have $x_i \notin \cZ$ if and only if $i$ is either a singleton or an adjacency in $\cP = \cP'$. In the former case, we must have $x'_i = a_i = x_i$ and in the latter case $x'_i = a_{i+1} = x_{i+1}$.
\end{remark}

We obtain the following characterisation of equivalent t-structures on $\cC(\cZ)$.

\begin{proposition}\label{P:equivalence t-structures}
	Let $(\cX,\cY)$ and $(\cX',\cY')$ be t-structures on $\cC(\cZ)$ with associated $\overline{\cZ}$-decorated non-crossing partitions $(\cP,{\bf x})$ and $(\cP',{\bf x}')$ respectively. Then $(\cX,\cY)$ and $(\cX',\cY')$ are equivalent if and only if $(\cP, {\bf x})$ and $(\cP', {\bf x}')$ are equivalent.
\end{proposition}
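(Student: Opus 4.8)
The plan is to translate Neeman's inclusion condition $\Sigma^N \cX \subseteq \cX' \subseteq \Sigma^{-N}\cX$ directly into the combinatorial description of aisles from Theorem \ref{T:one-to-one}, and read off what it forces on the associated decorated non-crossing partitions. First I would record the effect of suspension on aisles: since $\Sigma$ acts on arcs by $\{y_0,y_1\}\mapsto\{y_0^-,y_1^-\}$, we have $\Sigma^N \cX(\cP,{\bf x}) = \cX(\cP, {\bf x}^{-N})$ where ${\bf x}^{-N} = (x_1^{(-N)},\ldots,x_n^{(-N)})$, with the convention $a_i^{(-N)} = a_i$ (this is exactly the convention already used in the proof of the boundedness proposition). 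Crucially, $\Sigma^N$ does not change the underlying partition $\cP$, only the decoration, and it only moves those decorations $x_i$ that lie in $\cZ$ (the ones in $Z({\bf x})$), sending each such $x_i$ strictly clockwise by $N$ steps.

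For the ``if'' direction, suppose $(\cP,{\bf x})\sim(\cP',{\bf x}')$, so $\cP = \cP'$ and $Z({\bf x}) = Z({\bf x}')$. By Remark \ref{R:equal}, $x_i = x'_i$ for all $i\notin Z({\bf x})$. For $i\in Z({\bf x})$, both $x_i$ and $x'_i$ lie in the open interval $(a_i,a_{i+1})$ of $\cZ$; since there are infinitely many points of $\cZ$ accumulating at $a_i$ from above, there is a single $N$ large enough that $x_i^{(-N)} \in (a_i, \min(x_i,x'_i)]$ and $x_i'^{(-N)}\in (a_i,\min(x_i,x'_i)]$ simultaneously for every $i\in Z({\bf x})$. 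Comparing the explicit aisles $\cX(\cP,{\bf x}) = \add\{\{y_0,y_1\}\mid y_0,y_1\in\bigcup_{i\in B}(a_i,x_i]\ \text{for some}\ B\in\cP\}$, and noting that $(a_i,x_i^{(-N)}]\subseteq (a_i,x'_i]$ and $(a_i,x_i^{(-N)}]\subseteq (a_i,x_i]$ for all $i$, we get $\Sigma^N\cX \subseteq \cX'$ and symmetrically $\Sigma^N\cX'\subseteq\cX$, i.e.\ $\Sigma^N\cX\subseteq\cX'\subseteq\Sigma^{-N}\cX$.

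For the ``only if'' direction, assume $\Sigma^N\cX\subseteq\cX'\subseteq\Sigma^{-N}\cX$ for some $N$. Chaining the inclusions gives $\Sigma^{2N}\cX\subseteq\cX\subseteq\Sigma^{-2N}\cX$ with $\cX' = \Sigma^{-N}\cX$ sandwiched, and more usefully $\Sigma^{2N}\cX' \subseteq \cX' \subseteq \Sigma^{-2N}\cX'$-style estimates relating $\cX$ and $\cX'$ up to finite suspension. The key point is that $\bigcup_{M}\Sigma^M\cX = \bigcup_M\Sigma^M\cX'$ and $\bigcap_M\Sigma^M\cX = \bigcap_M\Sigma^M\cX'$ as subcategories; I would use this to pin down $\cP$ first. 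An arc $\{y_0,y_1\}$ with $y_0\in(a_i,a_{i+1})$, $y_1\in(a_j,a_{j+1})$ lies in $\bigcup_M\Sigma^M\cX$ precisely when $i,j$ lie in a common non-singleton block of $\cP$ (since suspension can push the decoration all the way around an interval only when $x_i\neq a_i$, i.e.\ $i$ is not a singleton), so $\bigcup_M\Sigma^M\cX$ determines the non-singleton blocks of $\cP$; dually, via Corollary \ref{C:coaisle}, $\bigcap_M\Sigma^M\cX = \bigcup_M\Sigma^M(\Sigma^{-1}\cY)^{\mathrm{something}}$ — more cleanly, a singleton $\{i\}$ of $\cP$ is detected by the property that no arc with an endpoint in $(a_i,a_{i+1})$ lies in $\bigcup_M\Sigma^M\cX$. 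Hence $\cP = \cP'$. Given $\cP=\cP'$, Remark \ref{R:equal} fixes $x_i = x'_i$ off $Z({\bf x})$ and $Z({\bf x}')$; it remains to see $Z({\bf x}) = Z({\bf x}')$, which follows because $i\in Z({\bf x})$ iff $i$ is neither a singleton nor an adjacency of $\cP$ \emph{and} $x_i\neq a_{i+1}$, and the condition $x_i = a_{i+1}$ versus $x_i\in\cZ$ is invariant under finite suspension (suspension fixes $a_{i+1}$ and moves points of $\cZ$ to other points of $\cZ$, never to $a_{i+1}$), so the inclusions $\Sigma^N\cX\subseteq\cX'\subseteq\Sigma^{-N}\cX$ cannot convert a ``limit'' decoration into a ``$\cZ$'' decoration or vice versa. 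Therefore $(\cP,{\bf x})\sim(\cP',{\bf x}')$.

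I expect the main obstacle to be the bookkeeping in the ``only if'' direction: cleanly extracting $\cP$ (and separately $Z({\bf x})$) from the pair of subcategories $\bigcup_M\Sigma^M\cX$ and $\bigcap_M\Sigma^M\cX$ requires carefully matching the combinatorial behaviour of suspension on half-open intervals $(a_i,x_i]$ — in particular distinguishing the three cases $x_i = a_i$ (singleton), $x_i = a_{i+1}$ (adjacency), and $x_i\in\cZ$ — against the definition of $\cX(\cP,{\bf x})$, and making sure the argument is symmetric in $\cX$ and $\cX'$. Everything else is a direct unwinding of Theorem \ref{T:one-to-one} together with Remark \ref{R:equal}.
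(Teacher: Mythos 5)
Your ``if'' direction is essentially the paper's argument: choose $N$ large enough so that the shifted interval endpoints are sandwiched, then read off $\Sigma^N\cX\subseteq\cX'\subseteq\Sigma^{-N}\cX$ directly from the combinatorial description of the aisles. This is fine.

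For the ``only if'' direction, your route is more indirect than the paper's, and along the way two of your stated criteria are incorrect in a way worth flagging. You write that an arc $\{y_0,y_1\}$ with $y_0\in(a_i,a_{i+1})$, $y_1\in(a_j,a_{j+1})$ lies in $\bigcup_M\Sigma^M\cX$ precisely when $i,j$ lie in a common \emph{non-singleton} block of $\cP$, ``since suspension can push the decoration all the way around an interval only when $x_i\neq a_i$, i.e.\ $i$ is not a singleton.'' But a singleton $\{i\}$ is allowed decoration $x_i\in[a_i,a_{i+1})$, so $x_i\in\cZ$ is perfectly possible for a singleton; in that case $\bigcup_M(a_i,x_i^{(-M)}]=(a_i,a_{i+1})$ and $\bigcup_M\Sigma^M\cX$ does contain arcs with both endpoints in $(a_i,a_{i+1})$. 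The same conflation of ``singleton'' with ``$x_i=a_i$'' (and ``adjacency'' with ``$x_i=a_{i+1}$'') reappears in your claimed characterisation ``$i\in Z({\bf x})$ iff $i$ is neither a singleton nor an adjacency of $\cP$ and $x_i\neq a_{i+1}$'', which again fails for a singleton with $x_i\in\cZ$. Neither slip is fatal, because what actually carries the argument is (a) that $\bigcup_M\Sigma^M\cX$ determines the non-exhaustive non-crossing partition obtained from $\cP$ by discarding only those $i$ with $x_i=a_i$, and $\cP$ is then the unique non-crossing partition of $[n]$ extending it by singletons; and (b) that, once $\cP=\cP'$ is known, the interval inclusions $(a_i,x_i^{-N}]\subseteq(a_i,x'_i]\subseteq(a_i,x_i^N]$ force $x_i\notin\cZ$ iff $x'_i\notin\cZ$, since a limit-point decoration is fixed under shift while a $\cZ$-decoration is not — which is exactly the invariance you invoke at the end. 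But both need restating correctly. The paper's proof of $\cP=\cP'$ is cleaner and avoids the subtlety entirely: it just takes a single arc $\{y_0,y_1\}\in\cX$ witnessing $i\sim_\cX j$, observes that $\{y_0^{-N},y_1^{-N}\}\in\Sigma^N\cX\subseteq\cX'$, and reads off that $i$ and $j$ lie in a common block of $\cP'$; this sidesteps any appeal to $\bigcup_M\Sigma^M\cX$ or $\bigcap_M\Sigma^M\cX$. I would recommend adopting that direct argument, or at least repairing the two ``precisely when'' claims above.
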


\begin{proof}
	 Consider $\cX$ and $\cX'$ as sets of arcs, and let $\cP = \{B_m \mid m \in I\}$ and $\cP' = \{B'_m \mid m \in I'\}$.
	We have
	\begin{eqnarray*}
		\cX = \{\{y_0,y_1\} \; \text{arc of}\; \cZ \mid y_0,y_1 \in \bigcup_{i \in B_m} (a_i,x_i] \; \text{for some} \; m \in I\} \\
		\cX' = \{\{y_0,y_1\} \; \text{arc of}\; \cZ \mid y_0,y_1 \in \bigcup_{i \in B'_m} (a_i,x'_i] \; \text{for some} \; m \in I'\},
	\end{eqnarray*}
	for ${\bf x} = (x_1,\ldots,x_n)$ and ${\bf x}' = (x'_1, \ldots, x'_n)$. For $k \in \bZ$ observe that the set of arcs associated to $\Sigma^k \cX$ is
	\begin{eqnarray}\label{E:shift of set of arcs}
		\cX^{-k} = \{\{y_0,y_1\} \; \text{arc of}\; \cZ \mid y_0,y_1 \in \bigcup_{i \in B_m} (a_i,x^{-k}_i] \; \text{for some} \; m \in I\}.
	\end{eqnarray}
	
	Assume first that $(\cP,{\bf x}) \sim (\cP',{\bf x}')$. For each $i \in [n]$ we set 
	\[
	    N_i = \begin{cases}
	                0 & \text{if $x_i=x'_i$}\\
	                m & \text{if $x_i \neq x'_i$ and $m \geq 0$ is such that $x_i^{(m)} = x'_i$ or ${x'}_i^{(m)} = x_i$.}
	            \end{cases}
	\]
	Note that, if $x_i \neq x'_i$, then necessarily $x_i,x'_i \in (a_i,a_{i+1}) \subseteq \cZ$ and $N_i$ is well-defined.
Setting $N = \max\{N_i \mid i \in [n]\}$ we observe that
	\begin{eqnarray*}
		a_i \leq x_i{^{-N}} \leq x_i^{-N_i} \leq x_i' \leq a_{i+1} & \text{and} &
		a_i \leq x_i' \leq x_i^{N_i} \leq x_i^{N} \leq a_{i+1}.
	\end{eqnarray*}
	In particular, for all $i \in [n]$ we have $(a_i,x_i^{-N}] \subseteq (a_i,x'_i] \subseteq (a_i,x_i^N]$. Comparing with (\ref{E:shift of set of arcs}) for $k = \pm N$ it follows that $\Sigma^N \cX \subseteq \cX' \subseteq \Sigma^{-N} \cX$. Therefore, $(\cX,\cY)$ and $(\cX',\cY')$ are equivalent.
	
	On the other hand, assume now that $(\cX,\cY)$ and $(\cX',\cY')$ are equivalent t-structures with $\cX^{-N} \subseteq \cX' \subseteq \cX^{N}$ for the associated sets of arcs. We first show that $\cP = \cP'$. Let $i \neq j \in [n]$ be in the same block of $\cP$. Then there exists an arc $\{y_0,y_1\} \in \cX$ with $y_0 \in (a_i,x_i]$ and $y_1 \in (a_j,x_j]$. The arc $\{y_0^{-N},y_1^{-N}\}$ lies in the set of arcs $\cX^{-N} \subseteq \cX'$. Thus, we must have $y_0^{-N} \in (a_i,x'_i]$ and $y_1^{-N} \in (a_j,x'_j]$, and $i$ and $j$ are in the same block of $\cP'$. A symmetric argument, using the inclusion $\cX' \subseteq \cX^N$, shows that if $i$ and $j$ are in the same block of $\cP'$, then they are in the same block of $\cP$. Therefore we have $\cP = \cP'$.
	
Finally, assume that for some $1 \leq i \leq n$ we have $x'_i \notin \cZ$, that is $x'_i = a_i$ or $x'_i = a_{i+1}$. In that case, for all $k \in \bZ$ we have $(a_i,{x'}_i^{k}] = (a_i,x'_i]$.  Using the inclusion $\cX^{-N} \subseteq \cX' \subseteq \cX^{N}$ and Equation (\ref{E:shift of set of arcs}) we must also have $(a_i,x_i] = (a_i,x_i^k]$ for all $k \in \bZ$ and hence $x_i \notin \cZ$. 
Therefore, we have $Z({\bf x}) = Z({\bf x}')$.
\end{proof}

Note that for a $\overline{\cZ}$-decoration ${\bf x}$ of $\cP$ the ${\cZ}$-indices $Z({\bf x})$ must contain all the elements $i \in [n]$ that are neither singletons nor adjacencies of $\cP$. 
Equivalence classes of $\overline{\cZ}$-decorated non-crossing partitions of $[n]$ are given by pairs $(\cP,Z)$, where $\cP$ is a non-crossing partition of $[n]$ and $Z \subseteq [n]$ contains all elements of $[n]$ that are neither singletons nor adjacencies of $\cP$.
A $\overline{\cZ}$-decorated non-crossing partition $(\cP,{\bf x})$ of $[n]$ is a representative of the equivalence class $(\cP,Z({\bf x}))$.

Proposition \ref{P:equivalence t-structures} yields the following classification of equivalence classes of t-structures of $\cC(\cZ)$.

\begin{corollary}
	The one-to-one correspondence between t-structures on $\cC(\cZ)$ and $\overline{\cZ}$-decorated non-crossing partitions of $[n]$ induces a one-to-one correspondence between equivalence classes of  t-structures of $\cC(\cZ)$ and pairs $(\cP,Z)$ such that $\cP \in NC_n$ and $Z$ is a subset of $[n]$ that contains all elements of $[n]$ that are neither singletons nor adjacencies of $\cP$. 
\end{corollary}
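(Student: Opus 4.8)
The plan is to deduce this corollary directly from Proposition \ref{P:equivalence t-structures} together with the discussion immediately preceding it. Since Theorem \ref{T:one-to-one} gives a bijection between t-structures on $\cC(\cZ)$ and $\overline{\cZ}$-decorated non-crossing partitions of $[n]$, and Proposition \ref{P:equivalence t-structures} shows that two t-structures are equivalent (in Neeman's sense) if and only if the corresponding $\overline{\cZ}$-decorated non-crossing partitions $(\cP,{\bf x})$ and $(\cP',{\bf x}')$ satisfy $\cP = \cP'$ and $Z({\bf x}) = Z({\bf x}')$, the bijection of Theorem \ref{T:one-to-one} descends to a bijection on equivalence classes. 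Concretely, I would first check that $\sim$ on $\overline{\cZ}$-decorated non-crossing partitions (Definition \ref{D:equiv partitions}) is genuinely an equivalence relation — this is immediate since it is defined by equality of two invariants, $\cP$ and $Z({\bf x})$ — so that equivalence classes are well-defined, and the bijection of Theorem \ref{T:one-to-one} carries $\sim$-classes to Neeman-equivalence classes bijectively.

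The remaining point is to identify the set of $\sim$-equivalence classes with the set of pairs $(\cP, Z)$ described in the statement. First I would observe that an equivalence class is completely determined by the pair $(\cP, Z({\bf x}))$, since any two representatives share these data by definition, and conversely Remark \ref{R:equal} shows that once $\cP$ and $Z({\bf x})$ are fixed, the remaining entries $x_i$ for $i \notin Z({\bf x})$ are forced (to $a_i$ if $i$ is a singleton, to $a_{i+1}$ if $i$ is an adjacency), so the pair $(\cP, Z({\bf x}))$ recovers the class. Hence the assignment $[(\cP,{\bf x})] \mapsto (\cP, Z({\bf x}))$ is injective.

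For surjectivity, I need to show that every pair $(\cP, Z)$ with $\cP \in NC_n$ and $Z \subseteq [n]$ containing all elements that are neither singletons nor adjacencies of $\cP$ actually arises as $(\cP, Z({\bf x}))$ for some $\overline{\cZ}$-decorated non-crossing partition. Given such a pair, define ${\bf x} = (x_1, \ldots, x_n)$ by choosing, for $i \in Z$, any point $x_i \in (a_i, a_{i+1}) \cap \cZ$ (which is nonempty since $\cZ$ is infinite and $a_i$ is a two-sided limit point), and for $i \notin Z$ setting $x_i = a_i$ if $i$ is a singleton of $\cP$ and $x_i = a_{i+1}$ if $i$ is an adjacency of $\cP$; the hypothesis on $Z$ guarantees that every $i \notin Z$ falls into one of these two cases. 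One then checks that this ${\bf x}$ satisfies the membership conditions of Definition \ref{D:Zdecoratedncpartition}, so $(\cP,{\bf x})$ is a genuine $\overline{\cZ}$-decorated non-crossing partition, and that $Z({\bf x}) = Z$ by construction. Note this requires the convention, harmless for $n \geq 2$ and handled separately for $n = 1$ via Remark \ref{RemNg}, that the relevant half-open or open intervals are nonempty. Putting these together, the composite bijection of Theorem \ref{T:one-to-one} induces the claimed one-to-one correspondence. The only place that requires any care — and it is genuinely routine given the setup — is verifying that the constructed ${\bf x}$ is a valid decoration; everything else is bookkeeping around Definition \ref{D:equiv partitions} and Remark \ref{R:equal}.
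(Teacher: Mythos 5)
Your proposal is correct and takes essentially the same approach as the paper, which states this corollary as an immediate consequence of Proposition \ref{P:equivalence t-structures} together with the short preceding discussion identifying equivalence classes of $\overline{\cZ}$-decorated non-crossing partitions with pairs $(\cP,Z)$. You simply spell out the injectivity (via Remark \ref{R:equal}) and surjectivity (via an explicit construction of a decoration) that the paper leaves implicit; as a small remark, the intersection $(a_i,a_{i+1}) \cap \cZ$ you write is superfluous since $(a_i,a_{i+1})$ contains no limit points and hence already lies in $\cZ$.
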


We immediately obtain the following corollaries.

\begin{corollary}\label{C:non-degenerate equivalence}
	The one-to-one correspondence between t-structures of $\cC(\cZ)$ and $\overline{\cZ}$-decorated non-crossing partitions of $[n]$ induces a one-to-one correspondence between equivalence classes of non-degenerate t-structures of $\cC(\cZ)$ and non-crossing partitions of $[n]$.
\end{corollary}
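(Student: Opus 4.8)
The plan is to derive Corollary \ref{C:non-degenerate equivalence} as a direct consequence of the machinery already established, with essentially no new work beyond an easy bookkeeping step. The starting point is the previous corollary, which says that equivalence classes of t-structures correspond to pairs $(\cP, Z)$ where $\cP \in NC_n$ and $Z \subseteq [n]$ contains every index that is neither a singleton nor an adjacency of $\cP$. So I would restrict this correspondence to the non-degenerate t-structures and identify the resulting sub-collection of pairs.

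First I would recall from Corollary \ref{C:non-degenerate} that a t-structure associated to $(\cP, {\bf x})$ is non-degenerate if and only if $x_i \in \cZ$ for every $i \in [n]$, i.e.\ if and only if $Z({\bf x}) = [n]$. Next I would invoke Proposition \ref{P:equivalence t-structures}: equivalence of t-structures matches equivalence of decorated non-crossing partitions, where $(\cP,{\bf x}) \sim (\cP',{\bf x}')$ means $\cP = \cP'$ and $Z({\bf x}) = Z({\bf x}')$. Since non-degeneracy is detected entirely by the $\cZ$-indices being all of $[n]$, it is a property of the equivalence class: if $(\cP,{\bf x}) \sim (\cP',{\bf x}')$ and one is non-degenerate, so is the other. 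Hence "non-degenerate" descends to a well-defined property of equivalence classes.

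Now I would match up the data. Under the correspondence of the previous corollary, the equivalence class of the non-degenerate t-structure associated to $(\cP,{\bf x})$ maps to the pair $(\cP, Z({\bf x})) = (\cP, [n])$. Conversely, every non-crossing partition $\cP$ of $[n]$ gives rise to the pair $(\cP, [n])$, and this pair satisfies the constraint in the previous corollary trivially since $[n]$ contains everything; moreover by the classification (Theorem \ref{T:one-to-one}) there is an actual decorated non-crossing partition $(\cP,{\bf x})$ with $Z({\bf x}) = [n]$ — one simply picks each $x_i \in (a_i,a_{i+1}) \cap \cZ$ (nonempty since each limit point is two-sided), which is a legal $\overline{\cZ}$-decoration regardless of whether $i$ is a singleton or adjacency. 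Thus the assignment $\cP \mapsto (\cP,[n])$ is a bijection between $NC_n$ and the sub-collection of pairs $(\cP,Z)$ arising from non-degenerate t-structures, so composing with the previous corollary yields the desired one-to-one correspondence between equivalence classes of non-degenerate t-structures of $\cC(\cZ)$ and non-crossing partitions of $[n]$.

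There is essentially no obstacle here; the only mild point to verify carefully is that non-degeneracy is constant on equivalence classes (so that "equivalence classes of non-degenerate t-structures" is unambiguous), and that every $\cP$ admits a non-degenerate decoration — both of which are immediate from Corollaries \ref{C:non-degenerate} and the admissibility hypothesis on $\cZ$ that each limit point is two-sided. I would therefore write the proof as a two-sentence deduction citing the previous corollary and Corollary \ref{C:non-degenerate}.
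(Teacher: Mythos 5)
Your argument is correct and is precisely the bookkeeping the paper leaves implicit when it calls this corollary an immediate consequence of the preceding classification: by Corollary~\ref{C:non-degenerate} non-degeneracy is exactly the condition $Z({\bf x})=[n]$, which by Proposition~\ref{P:equivalence t-structures} is an invariant of the equivalence class, and restricting the bijection with pairs $(\cP,Z)$ to the fiber $Z=[n]$ gives the bijection with $NC_n$. The only genuinely checkable point, that every $\cP$ admits a decoration with all $x_i\in\cZ$, you handle correctly by choosing $x_i\in(a_i,a_{i+1})\cap\cZ$.
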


\begin{corollary}
For $n\geq 2$ all non-degenerate bounded below t-structures in $\cC(\cZ)$ are equivalent. Similarly, all non-degenerate bounded above t-structures in $\cC(\cZ)$ are equivalent.
\end{corollary}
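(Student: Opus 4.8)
The plan is to read this off directly from the classification of bounded t-structures together with the description of equivalence classes of non-degenerate t-structures established just above. The key observation is that for a non-degenerate t-structure $(\cX,\cY)$ with associated $\overline{\cZ}$-decorated non-crossing partition $(\cP,{\bf x})$, Corollary \ref{C:non-degenerate} forces every $x_i$ to lie in $\cZ$, so that $Z({\bf x}) = [n]$. Consequently, by Definition \ref{D:equiv partitions} and Proposition \ref{P:equivalence t-structures}, the equivalence class of a non-degenerate t-structure is completely determined by its non-crossing partition $\cP$; this is precisely the one-to-one correspondence recorded in Corollary \ref{C:non-degenerate equivalence}.

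First I would handle the bounded below case. By the preceding Proposition classifying bounded above and bounded below t-structures, a t-structure on $\cC(\cZ)$ with associated partition $\cP$ is bounded below if and only if $\cP$ is the finest partition $\{\{1\},\ldots,\{n\}\}$ of $[n]$. Hence any two non-degenerate bounded below t-structures have one and the same associated non-crossing partition, and by the observation above they therefore lie in the same equivalence class, i.e.\ they are equivalent. The bounded above case is entirely dual: such t-structures are exactly those whose associated partition is the coarsest partition $\{[n]\}$, so again all non-degenerate ones form a single equivalence class.

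There is no real obstacle here, as the statement is an immediate consequence of earlier results; the only point worth checking is that the claim is not vacuous, i.e.\ that for $n \geq 2$ non-degenerate bounded below (resp.\ bounded above) t-structures actually exist. This holds because for the finest partition every $i$ is a singleton and one may take $x_i$ to be any point of $(a_i,a_{i+1})\cap \cZ$, which is non-empty since each $a_i$ is a two-sided limit point; for the coarsest partition every $i$ is a non-singleton, so again $x_i$ can be chosen in $(a_i,a_{i+1})\cap\cZ$.
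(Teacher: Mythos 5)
Your proof is correct and takes the same route the paper intends: combine the preceding proposition (for $n\geq 2$, bounded below forces the finest partition and bounded above forces the coarsest) with Corollary \ref{C:non-degenerate equivalence} (equivalence classes of non-degenerate t-structures correspond bijectively to non-crossing partitions). The paper gives no separate argument for this corollary, so your derivation, including the useful sanity check that such t-structures exist, matches the intended reasoning.
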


\section{Lattice structure}

We show that the set of t-structures of $\cC(\cZ)$ forms a lattice under inclusion of aisles, with the meet of two t-structures given by the intersection of their aisles. Note that neither of these facts is generally true in an arbitrary triangulated category.

Kreweras \cite{K} showed that the set of non-crossing partitions of $[n]$ forms a lattice under refinement. Our construction of the lattice of non-exhaustive non-crossing partition is a slight generalisation of Kreweras' construction:
The order and the meet operation are defined in the same way,  cf Section \ref{S:thicksubcats}.

We know from Theorem \ref{T:one-to-one} that every t-structure is uniquely determined by a pair $(\cP,{\bf x})$, where $\cP$ is a non-crossing partition of $[n]$ and ${\bf x}$ is a $\overline{\cZ}$-decoration of $\cP$. The following lemma follows immediately from the correspondence between t-structures and $\overline{\cZ}$-decorated non-crossing partitions.  

\begin{lemma}\label{L:order}
	Let $\cX$ and $\cX'$ be aisles of t-structures in $\cC(\cZ)$ with associated $\overline{\cZ}$-decorated partitions $(\cP,{\bf x})$ and $(\cP',{\bf x}')$ respectively. Then $\cX \subseteq \cX'$ if and only if
	\[
		(\cP,{\bf x}) \leq (\cP',{\bf x}'),
	\]
	 i.e.\ if and only if $\cP \leq \cP'$ and for ${\bf x} = (x_1, \ldots, x_n)$, ${\bf x}' = (x'_1, \ldots, x'_n)$ we have $a_i \leq x_i \leq x'_i \leq a_{i+1}$ for all $1 \leq i \leq n$.
\end{lemma}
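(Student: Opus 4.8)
The plan is to work directly with the two explicit descriptions furnished by Theorem~\ref{T:one-to-one}: on the one hand $\cX = \cX(\cP,{\bf x})$ and $\cX' = \cX(\cP',{\bf x}')$ as additive hulls of the explicit sets of arcs in the statement of that theorem, and on the other hand the fact that $(\cP,{\bf x})$ is recovered from $\cX$ as the decorated partition $(\cP(\cX),{\bf x}(\cX))$ built in Definition~\ref{DefPPP} and Equation~\eqref{EqDec} (and similarly for $\cX'$), as in Proposition~\ref{P:surjective}. With these at hand each implication is a short combinatorial check, and I would carry them out in the order below.

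For "$(\cP,{\bf x}) \le (\cP',{\bf x}') \Rightarrow \cX \subseteq \cX'$": assuming $\cP \le \cP'$ and $a_i \le x_i \le x'_i \le a_{i+1}$ for all $i$, I would fix a block $B$ of $\cP$ and a block $B'$ of $\cP'$ with $B \subseteq B'$; then $(a_i,x_i] \subseteq (a_i,x'_i]$ for each $i$ together with $B \subseteq B'$ gives $\bigcup_{i\in B}(a_i,x_i] \subseteq \bigcup_{i\in B'}(a_i,x'_i]$, so every generating arc of $\cX(\cP,{\bf x})$ is a generating arc of $\cX(\cP',{\bf x}')$, hence $\cX \subseteq \cX'$.

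For the converse I would assume $\cX \subseteq \cX'$ and, with $S_i$ and $S'_i$ the sets of endpoints lying in $(a_i,a_{i+1})$ of arcs of $\cX$, resp.\ $\cX'$, as in Equation~\eqref{EqDec}, record that $S_i \subseteq S'_i$. I would then establish $x_i \le x'_i$ via the three cases of \eqref{EqDec}: if $S_i = \varnothing$ it is clear, since then $x_i = a_i$; if $x_i = \max S_i$, then $x_i \in S_i \subseteq S'_i$, and reading off \eqref{EqDec} for $S'_i$ shows $x'_i \ge x_i$ whatever the shape of $S'_i$; and if $S_i$ has no maximum, so $x_i = a_{i+1}$, then $S'_i$ has no maximum either, for a maximum $z'$ of $S'_i$ would bound the nonempty set $S_i$ above by some $z' \in (a_i,a_{i+1})$, and every nonempty subset of $(a_i,a_{i+1}) \cap \cZ$ all of whose elements lie below such a $z'$ has a maximum (the only limit point of $\cZ$ in $[a_i,a_{i+1})$ being $a_i$), contradicting the choice of $S_i$; so $x'_i = a_{i+1} = x_i$. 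Having obtained $a_i \le x_i \le x'_i \le a_{i+1}$ for all $i$, I would conclude with $\cP \le \cP'$: if $i \ne j$ lie in one block of $\cP = \cP(\cX)$, then by definition of $\sim_\cX$ some arc $\{y_0,y_1\} \in \cX \subseteq \cX'$ has $y_0 \in (a_i,x_i] \subseteq (a_i,x'_i]$ and $y_1 \in (a_j,x_j] \subseteq (a_j,x'_j]$, so $i \sim_{\cX'} j$, i.e.\ $i$ and $j$ lie in one block of $\cP'$.

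I expect the only point that is not pure unwinding of definitions --- the main obstacle, such as it is --- to be the third sub-case of the inequality $x_i \le x'_i$: one must exclude that enlarging the aisle produces a maximal endpoint in $(a_i,a_{i+1})$ where $\cX$ had none, and this is where the admissibility of $\cZ$ (two-sided limit points, none strictly between consecutive ones) enters. Everything else is a direct reading of the definitions, which is why the lemma could be announced as an immediate consequence of the classification.
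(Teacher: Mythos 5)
Your proof is correct and carefully unwinds what the paper dispatches in a single sentence as an immediate consequence of the classification from Theorem~\ref{T:one-to-one}. The only place requiring more than a direct set comparison is indeed your third sub-case, and your use of the fact that $\cZ\cap(a_i,z']$ is reverse well-ordered --- since $a_i$ is the only limit point of $\cZ$ in $[a_i,a_{i+1})$, the points of $\cZ$ in $(a_i,z']$ form a decreasing sequence $z'>z'^->z'^{--}>\cdots$ and every nonempty subset has a maximum --- correctly closes that gap.
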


For ease of notation, given two arbitrary non-crossing partitions $\cP$ and $\cP'$ with $\overline{\cZ}$-decorations ${\bf x} = (x_1, \ldots, x_n)$ and ${\bf x'} = (x'_1, \ldots, x'_n)$, we write
\begin{align*}
	\min\{{\bf x}, {\bf x'}\} = (y_1, \ldots, y_n), \; \text{where} \; y_i = \min_{[a_i,a_{i+1}]}\{x_i,x'_i\}, \\
	\max\{{\bf x}, {\bf x'}\} = (z_1, \ldots, z_n), \; \text{where} \; z_i = \max_{[a_i,a_{i+1}]}\{x_i,x'_i\}.
\end{align*}

\begin{thm}\label{T:lattice}
	The set of t-structures of $\cC(\cZ)$ forms a lattice under inclusion of aisles. 
	More precisely, consider two t-structures $(\cX, \cY)$ and $(\cX',\cY')$, with associated $\overline{\cZ}$-decorated non-crossing partitions $(\cP,{\bf x})$ and $(\cP',{\bf x}')$ respectively. Then their meet is given by the t-structure with the aisle 
	\[
		\cX(\cP \wedge \cP', \min\{{\bf x}, {\bf x'}\}),
	\] 
	and their join is given by the t-structure with the aisle 
	\[
		\cX(\cP \vee \cP', \max\{{\bf x}, {\bf x'}\}).
	\] 
\end{thm}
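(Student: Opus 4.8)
The plan is to reduce everything to combinatorics on the poset of $\overline{\cZ}$-decorated non-crossing partitions via Lemma \ref{L:order}, which identifies inclusion of aisles with the order $(\cP,{\bf x}) \leq (\cP',{\bf x}')$. Once we know this order is isomorphic to the poset of t-structures, it suffices to prove that the set $\{(\cP,{\bf x})\}$ of $\overline{\cZ}$-decorated non-crossing partitions of $[n]$ is a lattice, and to identify the meet and join explicitly as the stated formulas. So the first step is to verify that the proposed candidates are genuinely $\overline{\cZ}$-decorated non-crossing partitions: for the meet we must check that $\min\{{\bf x},{\bf x}'\}$ is a legal decoration of $\cP \wedge \cP'$, and for the join that $\max\{{\bf x},{\bf x}'\}$ is a legal decoration of $\cP \vee \cP'$. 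This is a case analysis on whether each index $i$ is a singleton, an adjacency, or neither in $\cP$, $\cP'$, and in $\cP \wedge \cP'$ (resp. $\cP \vee \cP'$); the key point is that an adjacency of $\cP \wedge \cP'$ is an adjacency of both $\cP$ and $\cP'$ (so both $x_i,x_i'$ lie in $(a_i,a_{i+1}]$ and hence so does their min), while a singleton of $\cP \vee \cP'$ is a singleton of both (so both lie in $[a_i,a_{i+1})$), and dually.

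**Meet.** For the meet I would argue directly that $\cX \cap \cX'$, viewed as a set of arcs, equals the set of arcs associated to $(\cP \wedge \cP', \min\{{\bf x},{\bf x}'\})$; then, since $\cX \cap \cX'$ is automatically the largest aisle contained in both (intersections of aisles are closed under the relevant operations — this will in fact be spelled out in Proposition \ref{P:meet is intersection}), it is the meet. An arc $\{y_0,y_1\}$ lies in $\cX$ iff $y_0,y_1 \in \bigcup_{i \in B}(a_i,x_i]$ for some block $B$ of $\cP$, and similarly for $\cX'$; so it lies in $\cX \cap \cX'$ iff it lies in $\bigcup_{i \in B}(a_i,x_i] \cap \bigcup_{j \in B'}(a_j,x_j']$ for some $B \in \cP$, $B' \in \cP'$. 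Since the intervals $(a_i,\cdot]$ are pairwise disjoint (as $i$ ranges over distinct indices) and $(a_i,x_i] \cap (a_i,x_i'] = (a_i,\min\{x_i,x_i'\}]$, this intersection is exactly $\bigcup_{i \in B \cap B'}(a_i,\min\{x_i,x_i'\}]$, which is precisely the interval attached to the block $B \cap B'$ of $\cP \wedge \cP'$ with decoration $\min\{{\bf x},{\bf x}'\}$. Hence $\cX \cap \cX' = \cX(\cP \wedge \cP', \min\{{\bf x},{\bf x}'\})$, and by Proposition \ref{P:injective} the latter really is an aisle.

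**Join.** The join is the genuine obstacle, because the join of non-crossing partitions is not given by a naive union of blocks, and one cannot simply intersect coaisles (coaisles need not intersect to a coaisle). My plan is to use Lemma \ref{L:order} together with the fact that $NC_n$ (and $NNC_n$) is already known to be a lattice (Kreweras \cite{K}, and the discussion in Section \ref{S:thicksubcats}) to characterise the join abstractly as the least $(\cQ,{\bf z})$ with $(\cP,{\bf x}) \leq (\cQ,{\bf z})$ and $(\cP',{\bf x}') \leq (\cQ,{\bf z})$, and then verify that $(\cP \vee \cP', \max\{{\bf x},{\bf x}'\})$ has this property. Any common upper bound $(\cQ,{\bf z})$ must satisfy $\cP \leq \cQ$, $\cP' \leq \cQ$, hence $\cP \vee \cP' \leq \cQ$ in $NC_n$; and it must satisfy $x_i \leq z_i$ and $x_i' \leq z_i$, hence $\max\{x_i,x_i'\} \leq z_i$, for every $i$. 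So the only thing left to check is that $\max\{{\bf x},{\bf x}'\}$ is an admissible decoration of $\cP \vee \cP'$ — which is the case analysis flagged above, the crucial case being when $i$ is a singleton of $\cP$ but not of $\cP'$ (or vice versa): then $i$ might fail to be a singleton of $\cP \vee \cP'$, but in that case $x_i = a_i$ while $x_i'$ ranges in $(a_i,a_{i+1})$, so $\max\{x_i,x_i'\} = x_i' \in (a_i,a_{i+1})$ sits in the required half-open (or open) interval; and symmetrically for adjacencies. Finally one records that both operations are order-preserving in the obvious sense, so the poset is a genuine lattice; the bounded-above and bounded-below elements constructed earlier provide the top and bottom, although this is not strictly needed. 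I expect the bookkeeping for the join decoration — making sure the interval type ($[\cdot,\cdot)$, $(\cdot,\cdot]$, or $(\cdot,\cdot)$) matches the singleton/adjacency status of each index in $\cP \vee \cP'$ — to be the fiddliest part, and the one place where Remark \ref{R:equal}-style observations about which decorations are forced by the partition are used repeatedly.
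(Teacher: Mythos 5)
Your proposal is correct and follows essentially the same route as the paper: reduce everything via Lemma~\ref{L:order} to the combinatorics of $\overline{\cZ}$-decorated non-crossing partitions, then verify that $\min\{{\bf x},{\bf x}'\}$ and $\max\{{\bf x},{\bf x}'\}$ are legal decorations of $\cP \wedge \cP'$ and $\cP \vee \cP'$ respectively --- the one nontrivial check that the paper compresses to ``clearly''. One caution, though: your parenthetical suggestion that intersections of aisles are automatically aisles (``closed under the relevant operations'') is false in general triangulated categories --- the existence of approximation triangles need not pass to intersections, which is exactly the open problem flagged in the introduction --- but your argument in fact sidesteps this by identifying $\cX \cap \cX'$ with $\cX(\cP \wedge \cP', \min\{{\bf x},{\bf x}'\})$ and invoking Proposition~\ref{P:injective}, so only the phrasing, not the logic, needs adjusting.
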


\begin{proof}
	Using our classification from Theorem \ref{T:one-to-one},  both $\cX(\cP \wedge \cP', \min\{{\bf x}, {\bf x'}\})$ and $\cX(\cP \vee \cP', \max\{{\bf x}, {\bf x'}\})$ are clearly aisles of t-structures, since $\min\{{\bf x}, {\bf x'}\}$ is a $\overline{\cZ}$-decoration of $\cP \wedge \cP'$ and $\max\{{\bf x}, {\bf x'}\}$ is a $\overline{\cZ}$-decoration of $\cP \vee \cP'$. 
    The fact that the t-structure with the aisle $\cX \vee \cX' = \cX(\cP \vee \cP', \max\{{\bf x}, {\bf x'}\})$ is the join of $(\cX, \cY)$ and $(\cX',\cY')$ and that the t-structure with the aisle $\cX(\cP \wedge \cP', \min\{{\bf x}, {\bf x'}\})$ is the meet of $(\cX, \cY)$ and $(\cX',\cY')$ follows directly from Lemma \ref{L:order}. 
\end{proof}

The meet, as described in Theorem \ref{T:lattice}, is in fact given by the intersection of the aisles.

\begin{proposition}\label{P:meet is intersection}
	With the notation from Theorem \ref{T:lattice} we have $\cX(\cP \wedge \cP', \min\{{\bf x}, {\bf x'}\}) = \cX \cap \cX'$. 
\end{proposition}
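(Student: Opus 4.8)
The plan is to reduce everything to a statement about sets of arcs and then verify a two-sided inclusion. Since $\cC(\cZ)$ is Krull--Schmidt and both $\cX = \cX(\cP,{\bf x})$ and $\cX' = \cX(\cP',{\bf x}')$ are additive closures of sets of arcs, $\cX \cap \cX'$ is the additive closure of the set of arcs lying in both. By Theorem \ref{T:lattice} we already know that $\min\{{\bf x},{\bf x}'\}$ is a $\overline{\cZ}$-decoration of $\cP \wedge \cP'$, so it remains only to show that an arc $\{y_0,y_1\}$ lies in both $\cX$ and $\cX'$ if and only if it lies in $\cX(\cP \wedge \cP', \min\{{\bf x},{\bf x}'\})$. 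Write ${\bf y} = \min\{{\bf x},{\bf x}'\} = (y_1,\dots,y_n)$, so that $(a_i,y_i] = (a_i,x_i] \cap (a_i,x'_i]$ for every $i$.

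The inclusion $\cX(\cP \wedge \cP', {\bf y}) \subseteq \cX \cap \cX'$ is the easy one: any block of $\cP \wedge \cP'$ has the form $C = B \cap B'$ with $B \in \cP$ and $B' \in \cP'$, and then $C \subseteq B$ and $(a_i,y_i] \subseteq (a_i,x_i]$ for each $i$, so $\bigcup_{i \in C}(a_i,y_i] \subseteq \bigcup_{i \in B}(a_i,x_i]$, giving membership in $\cX$; by symmetry one also gets membership in $\cX'$.

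For the reverse inclusion the key observation is that $\cZ$ is the disjoint union of the intervals $(a_i,a_{i+1})$ and, since $x_i \leq a_{i+1}$ while endpoints of arcs lie in $\cZ$ and $a_{i+1} \notin \cZ$, a point $v \in \bigcup_{i \in B}(a_i,x_i]$ with $v \in (a_j,a_{j+1})$ must in fact satisfy $v \in (a_j,x_j]$ with $j \in B$ --- disjointness of the blocks of $\cP$ makes this index unambiguous. So given $\{y_0,y_1\} \in \cX \cap \cX'$, writing $y_0 \in (a_j,a_{j+1})$ and $y_1 \in (a_l,a_{l+1})$, membership in $\cX$ yields a block $B \in \cP$ with $j,l \in B$, $y_0 \in (a_j,x_j]$, $y_1 \in (a_l,x_l]$, and membership in $\cX'$ yields a block $B' \in \cP'$ with $j,l \in B'$, $y_0 \in (a_j,x'_j]$, $y_1 \in (a_l,x'_l]$. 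Then $j,l \in B \cap B' \neq \varnothing$, so $C := B \cap B'$ is a block of $\cP \wedge \cP'$ with $y_0 \in (a_j,y_j]$ and $y_1 \in (a_l,y_l]$, proving $\{y_0,y_1\} \in \cX(\cP \wedge \cP', {\bf y})$. Combining the two inclusions finishes the argument. I do not expect a genuine obstacle here: the only point needing care is the bookkeeping with the half-open intervals and the observation that membership of an arc's endpoint in $\bigcup_{i \in B}(a_i,x_i]$ determines a single index $i \in B$, which is precisely where disjointness of blocks and $a_{i+1} \notin \cZ$ are used.
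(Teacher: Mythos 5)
Your proof is correct and follows essentially the same route as the paper's: reduce to the underlying sets of arcs, use that a point in $\bigcup_{i \in B}(a_i,x_i]$ that is an endpoint of an arc (hence lies in $\cZ$, not in $L(\cZ)$) determines a unique interval index, and apply the identity $(a_i,x_i]\cap(a_i,x'_i] = (a_i,\min_{[a_i,a_{i+1}]}\{x_i,x'_i\}]$. The paper phrases the two inclusions as a chain of iff's rather than separating easy and hard directions, but the content is identical; the one cosmetic blemish in your write-up is reusing $y_1$ both as a coordinate of $\min\{{\bf x},{\bf x}'\}$ and as an arc endpoint.
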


\begin{proof}
	Let $\cP = \{B_m \mid m \in I\}$ and $\cP' = \{B'_m \mid m \in I'\}$ and ${\bf x} = (x_1, \ldots, x_n)$, ${\bf x}' = (x_1', \ldots, x_n')$. Then the subcategory $\cX \cap \cX'$ corresponds to the set of arcs $\cX \cap \cX'$, where
	\begin{align*}
		\cX = \{\{y_0,y_1\} \; \text{arc of}\; \cZ \mid y_0,y_1 \in \bigcup_{i \in B_m} (a_i,x_i] \; \text{for some} \; m \in I\} \text{ and} \\
		\cX' = \{\{y_0,y_1\} \; \text{arc of}\; \cZ \mid y_0,y_1 \in \bigcup_{i \in B'_m} (a_i,x'_i] \; \text{for some} \; m \in I'\}.
	\end{align*}
We have $\cP \wedge \cP' = \{B_m \cap B'_{m'} \mid m \in I, m' \in I', B_m \cap B'_{m'} \neq \varnothing\}$. It is thus enough to show that
	\begin{align}\label{E:equality}
		\cX \cap \cX' = \{\{y_0,y_1\} \; \text{arc of}\; \cZ \mid y_0,y_1 \in \bigcup_{i \in B_m \cap B'_{m'}} (a_i,\min_{[a_i,a_{i+1}]}\{x_i,x'_i\}] \; \text{for some} \; m \in I, m' \in I'\}.
	\end{align}
	We have $\{y_0,y_1\} \in \cX \cap \cX'$ if and only if $y_0, y_1 \in \big(\bigcup_{i \in B_m}(a_i,x_i]\big) \cap \big(\bigcup_{j \in B'_{m'}}(a_j,x'_j]\big)$ for some $m \in I$ and $m' \in I'$. This is the case if and only if $y_0 \in (a_i,x_i] \cap (a_i,x'_i]$ and $y_1 \in (a_j,x_j] \cap (a_j,x'_j]$ and $i,j \in B_m \cap B'_{m'}$ for some $m \in I$ and $m' \in I'$. Observing that $(a_i,x_i] \cap (a_i,x'_i] = (a_i,\min_{[a_i,a_{i+1}]}\{x_i,x'_i\}]$ and $(a_j,x_j] \cap (a_j,x'_j] = (a_j,\min_{[a_j,a_{j+1}]}\{x_j,x'_j\}]$ shows the equality in (\ref{E:equality}).
\end{proof}

\begin{remark}
    By \cite{K}, the lattice of non-crossing partitions is self-dual. In fact, so is the lattice of $\overline{\cZ}$-decorated non-crossing partitions. This corresponds to the fact that  $\cC(\cZ)$ is equivalent to $\cC(\cZ)^{op}$. This equivalence sends t-structures to t-structures and aisles to coaisles. So the lattice of t-structures under inclusion of aisles in $\cC(\cZ)$ is isomorphic to the lattice of t-structures under inclusion of coaisles in $\cC(\cZ)^{op}\simeq \cC(\cZ)$, forcing self-duality of the lattice of $\overline{\cZ}$-decorated t-structures. The join in the lattice of t-structures corresponds to the intersection of coaisles.
\end{remark}

Let now $\cT$ be any triangulated category. If t-structures form a lattice under inclusion of aisles, then this induces a lattice structure on the equivalence classes of t-structures. 

\begin{remark}
    The poset of t-structures on $\cT$ under inclusion of aisles induces a poset structure on the equivalence classes of t-structures: If $[(\cX,\cY)]$ and $[(\cX',\cY')]$ are equivalence classes of t-structures then we say $[(\cX,\cY)] \leq [(\cX',\cY')]$ if for every $(\tilde{\cX},\tilde{\cY}) \in [(\cX,\cY)]$ there exists a $(\tilde{\cX}',\tilde{\cY}') \in [(\cX',\cY')]$ such that $\tilde{\cX} \subseteq \tilde{\cX}'$.
\end{remark}

\begin{lemma}
    If the t-structures on $\cT$ form a lattice under inclusion of aisles, then the induced partial order on equivalence classes of t-structures on $\cT$ is also a lattice.
\end{lemma}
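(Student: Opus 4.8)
The plan is to reduce the statement to the lattice structure on t-structures together with the fact that $\Sigma$ acts on it by automorphisms. First I would record two elementary facts. Since $\Sigma$ is a triangulated autoequivalence it sends aisles to aisles, and since both $\Sigma$ and $\Sigma^{-1}$ preserve inclusions it induces an order-automorphism of the lattice of t-structures of $\cT$; an order-automorphism of a lattice preserves finite meets and joins, so $\Sigma^k(\cX \wedge \cX') = \Sigma^k \cX \wedge \Sigma^k \cX'$ and $\Sigma^k(\cX \vee \cX') = \Sigma^k \cX \vee \Sigma^k \cX'$ for all $k \in \bZ$. Secondly, an aisle $\cX$ is closed under $\Sigma$, so $\Sigma^j \cX \subseteq \cX$ for $j \geq 0$, hence $\Sigma^{k+j}\cX \subseteq \Sigma^k \cX$ for every $k \in \bZ$ and every $j \geq 0$; in particular $\Sigma^k \cX$ is equivalent to $\cX$ for every $k \in \bZ$.

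Next I would establish the following reformulation of the induced order: for t-structures with aisles $\cX$ and $\cX'$, one has $[(\cX,\cY)] \leq [(\cX',\cY')]$ if and only if $\Sigma^N \cX \subseteq \cX'$ for some $N \in \bZ_{\geq 0}$. For the forward implication, apply the definition of $\leq$ to the representative $\cX$ itself: there is an aisle $\tilde\cX' \sim \cX'$ with $\cX \subseteq \tilde\cX'$, and equivalence gives $\tilde\cX' \subseteq \Sigma^{-M}\cX'$ for some $M \geq 0$, so $\Sigma^M \cX \subseteq \cX'$. Conversely, if $\Sigma^N \cX \subseteq \cX'$ and $\tilde\cX \sim \cX$ is an arbitrary representative, then $\tilde\cX \subseteq \Sigma^{-M}\cX$ for some $M \geq 0$, hence $\tilde\cX \subseteq \Sigma^{-M}\cX = \Sigma^{-M-N}(\Sigma^N \cX) \subseteq \Sigma^{-M-N}\cX'$, and $\Sigma^{-M-N}\cX' \sim \cX'$; so every representative of $[(\cX,\cY)]$ is contained in some representative of $[(\cX',\cY')]$, which is exactly the defining condition.

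Finally I would show that for any representatives $\cX, \cX'$ of two classes the class $[(\cX \wedge \cX',\cdot)]$ (a t-structure being determined by its aisle, this notation makes sense) is their greatest lower bound, and dually $[(\cX \vee \cX',\cdot)]$ is their least upper bound. Using the reformulation, $[(\cX \wedge \cX',\cdot)] \leq [(\cX,\cY)]$ since $\Sigma(\cX \wedge \cX') \subseteq \cX \wedge \cX' \subseteq \cX$, and likewise $[(\cX \wedge \cX',\cdot)] \leq [(\cX',\cY')]$, so it is a common lower bound. If $[(\cA,\cB)]$ is any common lower bound, then $\Sigma^N \cA \subseteq \cX$ and $\Sigma^M \cA \subseteq \cX'$ for some $N, M \geq 0$; with $K = \max(N,M)$ we get $\Sigma^K \cA \subseteq \Sigma^N \cA \subseteq \cX$ and $\Sigma^K \cA \subseteq \Sigma^M \cA \subseteq \cX'$, hence $\Sigma^K \cA \subseteq \cX \wedge \cX'$, that is $[(\cA,\cB)] \leq [(\cX \wedge \cX',\cdot)]$. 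Thus the greatest lower bound exists and equals $[(\cX \wedge \cX',\cdot)]$; being the greatest lower bound, this class does not depend on the chosen representatives. The dual argument, using $\Sigma^K(\cX \vee \cX') = \Sigma^K \cX \vee \Sigma^K \cX'$, shows that the least upper bound of the two classes exists and equals $[(\cX \vee \cX',\cdot)]$. Hence every pair of equivalence classes has a meet and a join, so the induced poset is a lattice.

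I would expect the only mildly delicate point to be the bookkeeping in the second step, translating the ``for every representative there is a representative above it'' formulation of the induced order into the single inclusion $\Sigma^N \cX \subseteq \cX'$; once this reformulation is available, the remaining arguments are purely formal and use nothing beyond the fact that $\Sigma$ acts by lattice automorphisms and that aisles are closed under $\Sigma$.
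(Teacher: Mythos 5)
Your proof is correct. It proceeds along the same conceptual lines as the paper's argument---both show that the equivalence class of $\cX \wedge \cX'$ is the meet of the two classes (and dually for joins)---but your exposition is organized slightly differently and, I would say, a bit more cleanly. You first reformulate the induced order as $[(\cX,\cY)] \leq [(\cX',\cY')]$ if and only if $\Sigma^N \cX \subseteq \cX'$ for some $N \geq 0$; this collapses the ``for every representative there is a representative above it'' quantifier structure into a single inclusion, after which the remaining checks are mechanical. The paper instead works directly with the original definition of the order, first verifying by hand that $[\tilde{\cX} \wedge \tilde{\cX}']$ does not depend on the choice of representatives $\tilde\cX, \tilde\cX'$, and then verifying the greatest-lower-bound property. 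You avoid the explicit well-definedness step by observing that greatest lower bounds are automatically unique---a small but genuine economy. One thing worth noting: you make explicit that $\Sigma$ commutes with $\wedge$ and $\vee$ (as a lattice automorphism), a fact the paper also uses (in the step ``Analogously, we obtain $\cX \wedge \cX' \subseteq \Sigma^{-N}(\tilde{\cX} \wedge \tilde{\cX}')$'') but leaves implicit; since the lemma is stated for a general $\cT$ where the meet need not be the intersection of aisles, spelling this out is appropriate.
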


\begin{proof}
    We show that the meet of two equivalence classes is given by the equivalence class of the meet of their respective representatives, the proof for the joins is analogous. First let us check that these operations are well-defined. Assume we have t-structures $(\cX,\cY) \sim (\tilde{\cX},\tilde{\cY})$ with $\Sigma^N\tilde{\cX} \subseteq \cX \subseteq \Sigma^{-N}\tilde{\cX}$ and $(\cX',\cY') \sim (\tilde{\cX}',\tilde{\cY}')$ with $\Sigma^M\tilde{\cX}' \subseteq \cX' \subseteq \Sigma^{-M}\tilde{\cX}'$. Without loss of generality we can assume $M = N$. Then we get that $\Sigma^N(\tilde{\cX} \wedge \tilde{\cX}') \subseteq \Sigma^N \tilde{\cX} \subseteq \cX$ and $\Sigma^N(\tilde{\cX} \wedge \tilde{\cX}') \subseteq \Sigma^N \tilde{\cX}' \subseteq \cX'$, and thus $\Sigma^N(\tilde{\cX} \wedge \tilde{\cX}') \subseteq \cX' \wedge \cX$. Analogously, we obtain $\cX \wedge \cX' \subseteq \Sigma^{-N}(\tilde{\cX} \wedge \tilde{\cX}')$. Therefore, the t-structures with aisles $\cX \wedge \cX'$ and $\tilde{\cX} \wedge \tilde{\cX}'$ respectively are equivalent.

     Consider now an equivalence class $[(\cX'',\cY'')]$ of t-structures that is smaller or equal to both $[(\cX,\cY)]$ and $[(\cX',\cY')]$. Take a representative $(\tilde{\cX}'',\tilde{\cY}'')$ of $[(\cX'',\cY'')]$. Then there exist representatives $(\tilde{\cX},\tilde{\cY)}$ and $(\tilde{\cX}',\tilde{\cY}')$ of $[(\cX,\cY)]$ and $[(\cX',\cY')]$ respectively such that $\tilde{\cX}'' \subseteq \tilde{\cX}$ and $\tilde{\cX}'' \subseteq \tilde{\cX}'$. Hence $\tilde{\cX}'' \subseteq \tilde{\cX} \wedge \tilde{\cX}'$ and hence $[(\cX'',\cY'')]$ is smaller or equal  to the equivalence class of the t-structure with the aisle $\cX \wedge \cX'$. Therefore what we defined is indeed the meet.
\end{proof}

The following is an immediate consequence of Corollary \ref{C:non-degenerate equivalence} and Proposition \ref{P:equivalence t-structures}.

\begin{corollary}\label{C:lattice of equivalence classes}
    The lattice of equivalence classes of non-degenerate t-structures is isomorphic to the lattice of non-crossing partitions of $[n]$. Its top is given by the equivalence class of non-degenerate bounded below t-structures, and its bottom by the equivalence class of non-degenerate bounded above t-structures.
\end{corollary}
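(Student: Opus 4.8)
The plan is to obtain the corollary by assembling the results established above; I do not expect a genuine obstacle, as the statement is essentially a repackaging, the only point needing care being a closure check that I flag in the last paragraph.

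First I would recall, via Corollary \ref{C:non-degenerate}, that a t-structure on $\cC(\cZ)$ is non-degenerate exactly when its associated $\overline{\cZ}$-decorated non-crossing partition $(\cP,{\bf x})$ satisfies ${\bf x}\in\cZ^n$, i.e.\ $Z({\bf x})=[n]$. Combined with Proposition \ref{P:equivalence t-structures} and Definition \ref{D:equiv partitions}, this gives that two non-degenerate t-structures are equivalent if and only if they share the same underlying non-crossing partition, so that the assignment $[(\cX,\cY)]\mapsto\cP$ is the bijection onto $NC_n$ of Corollary \ref{C:non-degenerate equivalence}.

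Next I would check that this bijection is an isomorphism of lattices. By the lemma immediately preceding the corollary (applicable by Theorem \ref{T:lattice}), the induced order on equivalence classes is a lattice whose meet and join are computed on representatives: the meet (resp.\ join) of $[(\cX,\cY)]$ and $[(\cX',\cY')]$ is the class of the meet (resp.\ join) of the t-structures $(\cX,\cY)$ and $(\cX',\cY')$. Choosing non-degenerate representatives with decorated partitions $(\cP,{\bf x})$ and $(\cP',{\bf x}')$, Theorem \ref{T:lattice} identifies their meet and join with the t-structures whose aisles are $\cX(\cP\wedge\cP',\min\{{\bf x},{\bf x}'\})$ and $\cX(\cP\vee\cP',\max\{{\bf x},{\bf x}'\})$. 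The point requiring care is the observation that, since every coordinate of ${\bf x}$ and of ${\bf x}'$ lies in $\cZ$ while $(a_i,a_{i+1})\subseteq\cZ$ for every $i$, every coordinate of $\min\{{\bf x},{\bf x}'\}$ and of $\max\{{\bf x},{\bf x}'\}$ again lies in $\cZ$; hence both of these t-structures are again non-degenerate, and the bijection sends them to $\cP\wedge\cP'$ and $\cP\vee\cP'$ respectively. As these are exactly the meet and join in $NC_n$, the bijection preserves both operations, and so is a lattice isomorphism; in particular the equivalence classes of non-degenerate t-structures form a sublattice of the lattice of all equivalence classes of t-structures.

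Finally I would read off the top and the bottom of this lattice from the proposition on bounded t-structures above, which identifies the classes of non-degenerate bounded above and bounded below t-structures with the coarsest and the finest non-crossing partitions of $[n]$, i.e.\ the two extremal elements of $NC_n$. The whole argument is short, and the step I expect to be the only real obstacle is the closure check in the previous paragraph: a priori the equivalence classes of non-degenerate t-structures need not be closed under the lattice operations of the ambient lattice, and it is precisely the fact that coordinate-wise minima and maxima of $\cZ$-valued decorations remain $\cZ$-valued that makes the statement a genuine consequence of the earlier results rather than a formality.
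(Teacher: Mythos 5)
Your argument is correct and is essentially the proof the paper has in mind: the paper simply declares the corollary an immediate consequence of Corollary~\ref{C:non-degenerate equivalence} and Proposition~\ref{P:equivalence t-structures}, and what you supply is the necessary unpacking, in particular the closure check that coordinate-wise minima and maxima of $\cZ$-valued decorations remain $\cZ$-valued, which is exactly what makes the non-degenerate classes a sublattice rather than merely a subposet. One point worth flagging: by Lemma~\ref{L:order} a coarser partition yields a larger aisle, so the class associated to the coarsest partition $\{[n]\}$ --- which by the proposition on boundedness is the class of non-degenerate bounded above t-structures --- is the top of the lattice, and the finest partition (bounded below) is the bottom; this is what your pairing ``bounded above $\leftrightarrow$ coarsest, bounded below $\leftrightarrow$ finest'' naturally gives, and it is the opposite of the assignment stated in the corollary's final sentence, so you should double-check that sentence rather than take it at face value.
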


	\bibliography{bibliography}{}
	\bibliographystyle{alpha}

\end{document}